\theoremstyle{definition}
\newtheorem{Def}{Definition}[section]
\newtheorem{Rem}[Def]{Remark}
\theoremstyle{plain}
\newtheorem{Thm}[Def]{Theorem}
\newtheorem{Lem}[Def]{Lemma}
\newtheorem{Pro}[Def]{Proposition}
\newtheorem{Cor}[Def]{Corollary}
\newtheorem*{CNCon}{The coarse Novikov Conjecture}
\def\IN{\mathbb N}\def\IR{\mathbb R}\def\IE{\mathbb E}\def\IC{\mathbb C}\def\IA{\mathbb A}
\def\A{\mathcal A}\def\C{\mathcal C}\def\B{\mathcal B}\def\K{\mathcal K}\def\Q{\mathcal Q}\def\H{\mathcal H}\def\S{\mathcal S}
\def\supp{\textup{supp}}
\def\prop{\textup{Prop}}
\def\Cl{\textup{Cliff}_{\IC}}
\def\wox{\widehat{\otimes}}
\def\ox{\otimes}
\def\wt{\widetilde}
\def\ox{\otimes}
\def\Ga{\Gamma}
\def\ICu{\mathbb C_{u,\infty}}
\def\CauL{C^*_{u,L,\infty}}
\def\Cau{C^*_{u,\infty}}
\def\ICuL{\mathbb C_{u,L,\infty}}
\def\PdG{(P_d(G_n))_{n\in\IN}}
\def\Yg{(Y_{d,\gamma}:\gamma\in\Gamma)}
\def\PdGA{(P_d(G_n),\mathcal A(V_n))_{n\in\IN}}
\def\Ai{A_{\infty}}
\def\AiL{A_{L,\infty}}
\author[J.~Deng]{Jintao Deng}
\address[J.~Deng]{Department of Mathematics, SUNY at Buffalo, NY 14260, USA.}
\email{jintaode@buffalo.edu}
\author[L.~Guo]{Liang Guo}
\address[L.~Guo]{Research Center for Operator Algebras, School of Mathematical Sciences, East China Normal University, Shanghai, 200241, P. R. China.}
\email{52205500015@stu.ecnu.edu.cn}
\author[Q.~Wang]{Qin Wang}
\address[Q.~Wang]{Research Center for Operator Algebras, School of Mathematical Sciences
\\ East China Normal University, Shanghai 200241, P. R. China. }
\email{qwang@math.ecnu.edu.cn}
\author[G.~Yu]{Guoliang Yu}
\address[G.~Yu]{Department of Mathematics, Texas A\&M University, College Station, TX, 77843, USA.}
\email{guoliangyu@tamu.edu}
\title{Higher index theory for spaces with an \emph{FCE-by-FCE} structure}
\thanks{The third author is partially supported by NSFC (No. 11831006, 12171156), the Science and Technology Commission of Shanghai Municipality (No. 22DZ2229014), and the fourth author is supported by NSF 2000082, 2247313 and the Simons Fellows Program.}
\date{}
\begin{document}
\maketitle

\begin{abstract}
Let $(1\to N_n\to G_n\to Q_n\to 1)_{n\in\IN}$ be a sequence of extensions of finite groups. Assume that the coarse disjoint unions of $(N_n)_{n \in \mathbb{N}}$, $(G_n)_{n \in \mathbb{N}}$ and $(Q_n)_{n \in \mathbb{N}}$ have bounded geometry. The sequence $(G_n)_{n\in\IN}$ is said to have an \emph{FCE-by-FCE structure}, if the sequence $(N_n)_{n\in\IN}$ and the sequence $(Q_n)_{n\in\IN}$ admit \emph{a fibred coarse embedding} into Hilbert space. In this paper, we show that the coarse Novikov conjecture holds for spaces with an FCE-by-FCE structure.
\end{abstract}

\tableofcontents

\section{Introduction}

The \emph{coarse Novikov conjecture} provides an algorithm to determine the non-vanishing of the higher index of an elliptic operator on a complete non-compact Riemannian manifold. For a proper metric space $X$, this conjecture claims that the coarse assembly map
\begin{equation}\label{assembly map}\mu:KX_*(X)\to K_*(C^*(X))\end{equation}
from the coarse $K$-homology of $X$ to the $K$-theory of the \emph{Roe algebra} of $X$ is injective. It has fruitful applications in geometry and topology. One of the important applications is that it implies the Gromov-Lawson conjecture which claims that uniformly contractive manifolds do not carry an Riemannian metric with uniformly positive scalar curvature. The reader is referred to \cite{Yu2006} for a survey on this conjecture and some relevant results.

The coarse Novikov conjecture has been verified for a large class of metric spaces. It turns out that the notion of \emph{coarse embedding} introduced by Gromov \cite{Gromov1993} is a powerful tool to study this conjecture. Recall that a metric space $(X,d_X)$ admits a coarse embedding into $(Y,d_Y)$ if there exist a map $f:X\to Y$ and two non-decreasing functions $\rho_1,\rho_2:[0,\infty)\to [0,\infty)$ such that $\lim_{t\to\infty}\rho_i(t)=\infty$, $(i=1,2)$ and
$$\rho_1(d_X(x,y))\leq d_Y(f(x),f(y))\leq \rho_2(d_X(x,y)),$$
for all $x,y\in X$. The fourth author \cite{Yu2000} showed that the assembly map \eqref{assembly map} is an isomorphism when the space $X$ has bounded geometry and admits a coarse embedding into a Hilbert space.

A well-known obstruction to admit a coarse embedding into Hilbert space is coarsely containing an expander, then it admits no coarse embedding into Hilbert space (see for example, \cite[Section 13.2]{HIT2020}). Expanders are also an obstruction for the assembly map to be surjective. It is then natural to ask if coarse containing an expander is the only obstruction to admit a coarse embedding into Hilbert space. This question is answered by Arzhantseva and Tessera in \cite{AT2015}. They introduced the notion of relative expander as a counterexample which does not coarsely embed into any $L^p$-space for $p\in[1,\infty)$, and also does not coarsely contain any expander. These examples are constructed by using sequences of Cayley graphs of finite groups with uniformly bounded degrees. It is pointed out in \cite{DWY2023} that these sequences admit a \emph{CE-by-CE} extension structure. Recall that a sequence of group extension $(1\to N_n\to G_n\to Q_n\to 1)_{n\in\IN}$ is said to have \emph{CE-by-CE} extension structure if the sequence $(N_n)_{n\in\IN}$ with the induced metric from the word metrics of $(G)_{n\in\IN}$ and the sequence $(Q_n)_{n\in\IN}$ with the quotient metrics admit coarse embeddings into Hilbert spaces. Also in \cite{DWY2023}, the first, third, and fourth author show that the assembly map for $X$ is an isomorphism if $X$ is the coarse disjoint union of a sequence of groups that admits a \emph{CE-by-CE} structure. For the injectivity part, the condition on the quotient groups can be weakened to that $(Q_n)_{n\in\IN}$ admits a coarse embedding into a Hadamard manifold or an $\ell^p$ space for $1\leq p< \infty$, see \cite{GLWZ2023} for some relevant discussions.

In this paper, we are going to extend the results above to a much weaker assumption, called \emph{``FCE-by-FCE"} structure. The main theorem for this paper is as follows.

\begin{Thm}\label{main result}
Let $(1\to N_n\to G_n\to Q_n\to 1)_{n\in\IN}$ be a sequence of extensions of finite groups
with uniformly finite generating subsets. Equip $N=\bigsqcup_{n\in\IN}N_n$ with the induced metric from the word metrics of $(G)_{n\in\IN}$ and $Q=\bigsqcup_{n\in\IN}Q_n$ with the quotient metrics.
If both $N$ and $Q$ admit a \emph{\bf fibred coarse embedding} into Hilbert space, then the coarse Novikov conjecture holds for the coarse disjoint union $G=\bigsqcup_{n\in\IN}G_n$.
\end{Thm}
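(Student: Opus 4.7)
The plan is to prove injectivity of the coarse assembly map $\mu$ by reducing it, via two successive Bott-type twists coming from the two fibred coarse embeddings, to a \emph{doubly twisted} assembly map that can be computed by a finite-dimensional Mayer--Vietoris argument. This follows the overall template used for coarse Novikov with a single fibred coarse embedding, and the first, third and fourth authors' earlier CE-by-CE theorem in \cite{DWY2023}, but iterated to accommodate the two-layer extension structure.

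\textbf{The twisting framework.}
Using the fibred coarse embedding of $N$ into a Hilbert space $H_N$, I would construct a twisted Roe algebra $\Cau(G)$ with a localization analogue $\CauL(G)$, whose twisted coefficients at $g\in G_n$ are built from the Bott/Clifford symbols associated to the fibred-embedding image of the $N_n$-coset of $g$. Yu's infinite-dimensional Bott/Dirac construction then provides a commutative square
\begin{equation*}
\begin{tikzcd}
KX_*(G) \arrow[r,"\mu"] \arrow[d,"\beta_N"'] & K_*(C^*(G)) \arrow[d,"\beta_N"] \\
K_*(\CauL(G)) \arrow[r,"\mu^N"] & K_*(\Cau(G))
\end{tikzcd}
\end{equation*}
whose vertical Bott maps are $K$-theoretically injective, reducing injectivity of $\mu$ to that of $\mu^N$. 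Pulling back the fibred coarse embedding of $Q$ via $G\to Q$ produces coset-constant Hilbert-bundle data on $G$ that is compatible with the $N$-twisting, and iterating the construction yields a doubly twisted algebra with assembly map $\mu^{N,Q}$; a second Bott-type diagram then reduces the injectivity of $\mu^N$ to that of $\mu^{N,Q}$. To handle $\mu^{N,Q}$, I would filter the doubly twisted algebras by finite-dimensional affine subspaces of $H_N\oplus H_Q$; on each finite-dimensional stratum, a Mayer--Vietoris decomposition, together with a strong Lipschitz-homotopy invariance for the doubly twisted localization algebras, should show that $\mu^{N,Q}$ is an isomorphism, and the result would follow by passage to the inductive limit.

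\textbf{Main obstacle.}
The principal technical difficulty is the compatibility of the two fibred structures along the extension. Since the extension $1\to N_n\to G_n\to Q_n\to 1$ need not split, there is no canonical section $Q_n\to G_n$, and the pullback of the FCE of $Q$ provides Hilbert-bundle data only in the direction transverse to the $N$-cosets. One has to choose set-theoretic sections with controlled distortion, verify that the resulting transition data takes values absorbed by the $H_N$-twisting (using the FCE of $N$ together with the uniform finiteness of the generating sets), and show that the propagations of all relevant operators remain controlled after the second twist. Executing the Mayer--Vietoris on the doubly twisted localization algebras while simultaneously tracking $N$- and $Q$-propagation, and keeping everything compatible with the two fibred structures on overlapping large balls, is where I expect the bulk of the technical work to lie.
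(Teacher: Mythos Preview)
Your overall strategy of iterated Bott twisting is in the right spirit, but the order of the two twists is reversed relative to the paper, and that reversal is precisely the source of your ``main obstacle''. The paper twists by the FCE of the \emph{quotient} $Q$, not by that of $N$. This is the natural order because the quotient map $\pi:G_n\to Q_n$ lets one pull back the FCE data of $Q$ to every point of $G$ canonically: the coefficient algebra at $g\in G_n$ is $\A(V_n)$ with support condition centred at $t_{\pi(g)}(\pi(g))(s(\pi(g)))$. There is no analogous canonical map $G\to N$, which is exactly the non-splitting obstruction you flag; the paper never has to choose sections $Q_n\to G_n$ or worry about coset-to-coset compatibility.

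After this single $Q$-twist, the paper does \emph{not} twist a second time. Instead, the twisted algebras at infinity are decomposed by a cutting-and-pasting argument over $(\Gamma,r)$-separate coherent systems of open subsets of $\IR_+\times V_n$ indexed by points of $Q_n$; on each piece the twisted Roe algebra is identified with a direct-product-at-infinity of Roe algebras of uniform neighbourhoods of fibres $\pi^{-1}(\gamma)\subset G_n$, which are uniformly coarsely equivalent to $N_n$ (Proposition~\ref{cut}). The FCE of $N$ enters at this point not as a second twist but through the assertion that the \emph{reduced} coarse Baum--Connes assembly map at infinity is an isomorphism for such a sequence (Corollary~\ref{CBC at infinity for FCE}). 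That corollary in turn rests on a result your outline does not anticipate: for FCE spaces the maximal and reduced Roe algebras at infinity have isomorphic $K$-theory (Theorem~\ref{K-amenable at infinity for FCE spaces}), so that the known maximal isomorphism from \cite{CWY2013} transfers to the reduced side.

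Thus the architecture is: reduce to the assembly map at infinity via diagram~\eqref{CNC to CNC at infty}; twist once by $Q$; show the localized Bott map $(\beta_L)_*$ is an isomorphism by Mayer--Vietoris and classical Bott periodicity on zero-skeleta (Theorem~\ref{BottL}); show the twisted evaluation map is an isomorphism by cutting into $N$-pieces and invoking the FCE of $N$ (Theorem~\ref{twisted CBC for G_n}); diagram-chase. Your doubly-twisted route may be salvageable in principle, but it trades a clean reduction to the known CBC-at-infinity for $N$ for an ad hoc compatibility analysis that the paper avoids altogether.
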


The notion of fibred coarse embedding was introduced by in \cite{CWY2013} to study a maximal version of the coarse Baum-Connes conjecture. see Definition \ref{FCE}. It can also be used to study the coarse Novikov conjecture, see \cite{FS2014, CWW2014, GLWZ2022}. Roughly speaking, a metric space admits a fibred coarse embedding into a Hilbert space if it can be coarse embedding into this Hilbert space piece-wise and the coarse embeddings for intersection parts of two pieces are isometric. We shall similarly say that a sequence of group extensions admits a \emph{FCE-by-FCE} structure if it satisfies the conditions in Theorem \ref{main result}. Since coarsely embeddable metric spaces automatically admit a fibred coarse embedding into Hilbert spaces, one can see that \emph{FCE-by-FCE} is much weaker than \emph{CE-by-CE}.


In this paper, we utilize a modified geometric Dirac-dual-Dirac method to give a more straightforward proof for the following:

\begin{Thm}
 Let $X=\bigsqcup_{n \in \mathbb{N}} X_n$ be a sparse metric space. If the space $X$ admits a fibred coarse embedding into Hilbert space, then the coarse Novikov conjecture holds for $X$. 
\end{Thm}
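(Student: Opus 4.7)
The plan is to deploy a geometric Dirac-dual-Dirac argument adapted to the fibred setting, exactly the ``modified'' version foreshadowed in the paragraph preceding the statement. The overall strategy mirrors the proof of the coarse Novikov conjecture for spaces coarsely embeddable into Hilbert space, but the construction of the twisted coefficient algebra must now be carried out fibrewise, using only the local embeddings guaranteed by the fibred structure.

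First, I would build a $\IZ/2$-graded coefficient $C^*$-algebra $\A$ on $X$ from the fibred embedding data, combining the local Hilbert-space embeddings $f_C:C\to H$ (where $C\subseteq X$ ranges over subsets of bounded diameter) with the affine-isometric transition data on overlaps, together with the Clifford algebra $\Cl(H)$ and the Bott--Dirac spectral data on $H$. I would then form the twisted Roe algebra $C^*(X,\A)$ and its localization $C^*_L(X,\A)$, together with the twisted coarse assembly map $\mu_\A:K_*(C^*_L(X,\A))\to K_*(C^*(X,\A))$.

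Second, using the Bott--Dirac element along each fibre I define an asymptotic Bott transformation $\beta:K_*(C^*_L(X))\to K_*(C^*_L(X,\A))$, and, in the reverse direction, a dual-Dirac asymptotic morphism $\alpha$. A Higson--Kasparov--Trout style infinite-dimensional Bott periodicity argument should show that $\alpha\circ\beta$ is the identity on $K_*(C^*_L(X))\cong KX_*(X)$, so $\beta$ is split-injective. Since the coarse assembly map $\mu$ factors, under these identifications, through $\mu_\A\circ\beta$, injectivity of $\mu$ reduces to injectivity of $\mu_\A$.

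Third, sparseness of $X$ is crucial for the last step. Because the pieces $X_n$ are mutually far apart, the twisted Roe algebra $C^*(X,\A)$ splits, compatibly with propagation, as a restricted product of twisted Roe algebras over the $X_n$, and $\mu_\A$ reduces to a direct limit over the Rips scale $d$ of finite-propagation twisted assembly maps on bounded subsets of each $X_n$. On each such bounded piece the fibred embedding is an honest embedding into a finite-dimensional affine subspace of $H$, and a Mayer--Vietoris argument combined with finite-dimensional Bott periodicity for the Clifford algebras $\Cl(H_k)$, $H_k\subseteq H$, yields the isomorphism. The main obstacle is propagation control: one must choose the scale of the Bott element so that both $\beta$ and $\alpha$ have propagation controlled by the fibred-embedding parameters, and so that all asymptotic compositions, as $d\to\infty$, remain well-defined asymptotic morphisms in the limit. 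This propagation bookkeeping, made tractable by working on the localization algebra, is precisely where the \emph{modified geometric} simplification over previous proofs in \cite{FS2014, CWW2014, GLWZ2022} enters.
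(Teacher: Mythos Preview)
Your overall shape --- build fibred twisted coefficients, run a Bott/Dirac argument, reduce to a twisted assembly map and prove that one is an isomorphism --- is a legitimate strategy, but it is \emph{not} the route the paper takes for this theorem, and your third step is misdescribed.

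The paper's argument proceeds differently. First, using the diagram~\eqref{CNC to CNC at infty} in Section~\ref{subsec: CNinfty}, the coarse Novikov conjecture for the sparse space $X$ is reduced to injectivity of the assembly map \emph{at infinity}, i.e.\ after quotienting by the ghost ideal. Second --- and this is where the geometric Dirac--dual-Dirac enters --- the paper shows (Theorem~\ref{K-amenable at infinity for FCE spaces}) that the maximal and reduced Roe algebras at infinity have the same $K$-theory: one builds the twisted Roe algebra at infinity from the fibred embedding, proves that for the twisted algebra the maximal and reduced completions literally coincide by a cutting-and-pasting/nuclearity argument (Proposition~\ref{twisted K-amenable at infinity for FCE spaces} and Lemma~\ref{separated case: twisted K-amenable at infinity}), and then uses the Bott and Dirac maps on the Roe-algebra side (Lemmas~\ref{Bott for FCE}, \ref{Dirac for FCE}, \ref{composition of bott and dirac}) to deduce the same for the untwisted algebras by diagram chasing. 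The conclusion (Corollary~\ref{CBC at infinity for FCE}) then follows by invoking \cite[Theorem 1.1]{CWY2013}, which already established the \emph{maximal} coarse Baum--Connes conjecture at infinity for FCE spaces. In short, the paper's Dirac--dual-Dirac is deployed to prove ``max $=$ red at infinity'', and the isomorphism of the assembly map itself is imported from \cite{CWY2013}; the ``at infinity'' framework is essential because the twisted product $(t_{xz})_*$ is only defined once $l_n$ exceeds the propagation, so one must discard finitely many $X_n$.

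By contrast you work globally, never pass to infinity, never mention the maximal completion, and attempt to prove the twisted assembly isomorphism from scratch. That route is essentially the one in \cite{CWW2014, GLWZ2022}, so it is not the ``more straightforward'' proof the paper advertises. More importantly, your Step~3 contains a genuine misconception: the twisted Roe algebra does \emph{not} split as a restricted product over the $X_n$. Sparseness is not what drives the decomposition; the support condition in $\IR_+\times V_n$ (your condition that $\supp T(x,y)$ lies in a ball around the embedded image) is. The correct mechanism is to cover $\IR_+\times V_n$ by coherent systems of small balls, obtain a Mayer--Vietoris decomposition of the twisted algebra indexed by these coefficient-side open sets (as in Definition~\ref{coherent system for FCE} and Lemma~\ref{ideal}), and on each $(\Gamma,r)$-separate piece reduce to a uniform family of bounded-diameter problems. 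Your sentence ``on each such bounded piece the fibred embedding is an honest embedding'' is the right intuition for the endgame, but the path to those bounded pieces goes through the Hilbert-space support, not through the decomposition $X=\bigsqcup X_n$.
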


The basic idea to study the coarse Novikov conjecture for metric spaces which admits a fibred coarse embedding into specific space is to reduce the conjecture to a more manageable form, called the coarse Novikov conjecture at infinity. Building upon the insights from \cite{DWY2023}, we employ the concept of fibred coarse embedding for the quotient groups $(Q_n)_{n\in\mathbb{N}}$ to define both the twisted Roe algebra and the localization algebra at infinity for the sequence $(G_n)_{n\in\mathbb{N}}$. By by cutting and pasting argument, we can reduce this problem to the coarse Baum-Connes conjecture at infinity for the normal subgroups $(N_n)_{n\in\mathbb{N}}$. While the \emph{maximal} version of the coarse Baum-Connes conjecture at infinity for metric spaces that can be fibred coarsely into a Hilbert space has been examined in \cite{CWY2013}, our current focus is on a more refined version. Specifically, we demonstrate that the $K$-theory of the maximal Roe algebra at infinity is isomorphic to that of its reduced counterpart.

This paper is organized as follows. In Section \ref{sec: intro to CNC}, we briefly recall the coarse Novikov conjecture for a metric space with bounded geometry, In Section \ref{sec: K-amenable at infinifty}, we prove that for a sparse metric space which admits a fibred coarse embedding into a Hilbert, the maximal and the reduced Roe algebra at infinity have the same $K$-theory. In Section \ref{sec: twisted CBC at infty}, we define the twisted Roe algebra and the twisted localization algebra for $(G_n)_{n\in\IN}$ by using the fibred coarse embedding for the quotient groups $(Q_n)_{n\infty}$. We then prove the evaluation map between the twisted algebras induces an isomorphism on $K$-theory by using the fibred coarse embedding for the normal subgroups $(N_n)_{n\in\IN}$.
In Section \ref{sec: Proof of main result}, we define the Bott maps and show that the Bott map from the localization algebra to the twisted localization algebra induces an isomorphism on $K$-theory.

\section{Premilinaries on the coarse Novikov conjecture}\label{sec: intro to CNC}

In this section, we shall briefly recall some background of the coarse Novikov conjecture (cf. \cite{HR1993, HIT2020}), including the concepts of Roe algebras and $K$-homology of proper metric spaces.

Let $X,Y$ be \emph{proper} metric spaces, i.e., the closure of bounded subsets in $X, Y$ are compact. A map $f:X\to Y$ is said to be \emph{coarse} if for any $R>0$, there exists $S>0$ such that $d_Y(f(x),f(y))<S$ whenever $d_X(x,y)<R$ and $f$ is metric proper, i.e., the preimage of a bounded set is still bounded. Two maps $f,g:X\to Y$ are \emph{close} if there exists $r>0$ such that $d_Y(f(x),g(x))<r$ for any $x,y\in X$. A coarse map $f:X\to Y$ is a \emph{coarse equivalence} if there exists a coarse map $g:Y\to X$ such that $f\circ g$ and $g\circ f$ are close to $id_Y$ and $id_X$, respectively. We say two metric spaces are \emph{coarsely equivalent} if there exists a coarse equivalence between them. In coarse geometry, these two spaces are considered equivalent.

Consider a sequence of bounded metric spaces $(X_n,d_n)_{n\in\IN}$. A \emph{coarse disjoint union} of $(X_n)_{n\in\IN}$ is the set $\bigsqcup_{n\in\IN}X_n$ equipped with the metric $d$ such that \begin{itemize}
\item[(1)] $d$ restricted to $X_n$ coincides with $d_n$;
\item[(2)] $d(X_i,X_j)\to\infty$ as $i+j\to\infty$ and $i\ne j$. \end{itemize}
One can see that the identity map between any two metrics satisfying (1) and (2) is a coarse equivalence. As a result, the coarse disjoint union of a sequence of metric spaces is unique up to coarse equivalence.  In this paper, we shall always assume that $X$ is a coarse disjoint union of a sequence of finite metric spaces. For a sequence of finite space $(X_n,d_n)_{d\in\IN}$, we say $(X_n)_{n\in\IN}$ has \emph{uniformly bounded geometry} if for any $R>0$, there exists $N>0$ such that any ball of radius $r$ in $X_n$ contains at most $N$ points for any $n\in\IN$. Notice that the coarse disjoint union $X=\bigsqcup_{n\in\IN}X_n$ has \emph{bounded geometry} if the sequence $(X_n)_{n\in\IN}$ has uniform bounded geometry.

For a finitely generated group $G$, there is a canonical way to view it as a proper metric space by using the \emph{word length metric}, we recommend readers to refer to \cite{NY2012} for details. Notice that the coarse geometry of $G$ does not depend on the choice of the generating set. Moreover, let $(G_n)_{n\in\IN}$ be a sequence of finite groups. One can also check that if $(G_n)_{n\in\IN}$ has uniformly finite generating subsets, then $(G_n)_{n\in\IN}$ has uniformly bounded geometry under the word length metric associated with these generating sets.

A map $f:X\to Y$ is said to be a coarse embedding, if there exist two non-decreasing functions $\rho_1,\rho_2:[0,\infty)\to [0,\infty)$ with $\lim_{t\to\infty}\rho_i(t)=\infty$, $(i=1,2)$, such that
$$\rho_1(d_X(x,y))\leq d_Y(f(x),f(y))\leq \rho_2(d_X(x,y)),$$
for all $x,y\in X$. Coarse embeddability into specific spaces shows some properness of the metric of $X$, which is one of the most important tools for studying the coarse Novikov conjecture. One is referred to \cite{GLWZ2023} to see the geometric Dirac-dual-Dirac method and how coarse embedding works in this method.

In \cite{CWY2013}, X.~Chen, Q.~Wang, and G.~Yu introduced a notion of \emph{fibred coarse embedding} as a generalization of coarse embedding. It turns out that fibred coarse embedding is much more useful when studying coarse disjoint unions. We shall recall an equivalent definition of fibred coarse embedding as follows (see \cite[Definiton 5.4]{CWY2013}).

\begin{Def}[\cite{CWY2013}]\label{FCE}
Let $(X_n,d_n)_{n\in\IN}$ be a sequence of finite metric spaces with uniformly bounded geometry. The coarse disjoint union $X=\bigsqcup_{n\in\IN}X_n$ is said to admit a fibred coarse embedding into Hilbert space if there exists
\begin{itemize}
\item[(1)]a field of Hilbert spaces $(H_x)_{x\in X_n,n\in\IN}$ over $X$;
\item[(2)]a section $s:X_n\to\sqcup_{x\in X_n}H_x$ for all $n\in\IN$;
\item[(3)]two non-decreasing functions $\rho_+$ and $\rho_-$ from $[0,\infty)$ to $[0,\infty)$ with $\lim_{r\to\infty}\rho_{\pm}(r)=\infty$;
\item[(4)]a non-decreasing sequence of numbers $0\leq l_0\leq l_1\leq\cdots\leq l_n\leq\cdots$ with $\lim_{n\to\infty}l_n = \infty$.
\end{itemize}
such that for each $x\in X_n$ there exists a "trivialization"
$$t_x:(H_z)_{z\in B_{X_n}(x,l_n)}\to B_{X_n}(x,l_n)\times H$$
such that the restriction of $t_x$ to the fiber $H_z$ for any $z\in B_{X_n}(x,l_n)$ is an affine isometry $t_x(z):H_z\to H$, satisfying:
\begin{itemize}
\item[(1)]for any $z,z'\in B_{X_n}(x,l_n)$,
$$\rho_-(d(z,z'))\leq \|t_x(z)(s(z))-t_x(z')(s(z'))\|\leq \rho_+(d(z,z'));$$
\item[(2)]for any $x,y\subset B_{X_n}(x,l_n)\cap B_{X_n}(y,l_n)\ne \emptyset$, there exists an affine isometry $t_{xy}:H\to H$ such that $t_x(z)\circ t^{-1}_y(z)= t_{xy}$ for all $z\in B_{X_n}(x,l_n)\cap B_{X_n}(y,l_n)$.
\end{itemize}\end{Def}

Typical examples of spaces which admit a fibred coarse embeddings into Hilbert space can be constructed by Box spaces of residually finite groups and warped cones. We will not get into details, the reader is referred to \cite{CWW2013, SW2021} for more details.

For a metric space $X$, we denote by $C_0(X)$ the $C^*$-algebra of all continuous functions on $X$ vanishing at infinity. A \emph{geometric $X$-module} (or simply, $X$-module) is a separable Hilbert space $H_X$ equipped with a non-degenerated $*$-representation of $C_0(X)$. An $X$-module is \emph{ample} if the only element of $C_0(X)$ acting as a compact operator is zero.

\begin{Def}[\cite{Roe1993}]
Let $X$ be a proper metric space, and $H_X$ an ample $X$-module.
\begin{itemize}
\item[(1)] The \emph{support} of $T\in\mathcal B(H_X)$, denoted by $\supp(T)$, is defined to be the set of all points $(x,y)\in X\times X$ such that for all $f,g\in C_0(X)$ with $f(x)\ne0$ and $g(y)\ne0$, we have
$$fTg\ne0.$$
\item[(2)] The \emph{propagation} of $T\in\mathcal B(H_X)$ is defined by
$$\prop(T)=\sup\{d(x,y)\mid (x,y)\in\supp(T)\}.$$
\item[(3)] An operator $T\in\mathcal B(H_X)$ is said to have \emph{finite propagation} if $\prop(T)<\infty$.
\item[(4)] An operator $T\in\mathcal B(H_X)$ is \emph{locally compact} if the operators $fT$ and $Tf$ are compact operators on $H_X$ for all $f\in C_0(X)$.
\item[(5)] The \emph{algebraic Roe algebra}, denoted by $\IC[X,H_X]$ (or simply $\IC[X]$), is the $*$-subalgebra of $\mathcal B(H_X)$ of all locally compact operators with finite propagation
\item[(6)] The \emph{Roe algebra,} denoted by $C^*(X)$, is defined to be the norm closure of $\IC[X]$ under the norm in $\mathcal B(H_X)$.
\end{itemize}\end{Def}

Note that $\IC[X]$ is a $*$-algebra which, up to non-canonical isomorphisms, does not depend on the choice of ample $X$-modules (cf. \cite[Section 4.3]{HIT2020}). There is a standard approach to an ample $X$-module as follows. Take a separable infinite-dimensional Hilbert space $H_0$, and a countable dense subset $Z\subseteq X$. Then the Hilbert space $H_X:=\ell^2(Z)\otimes H_0$ is equipped with a natural pointwise multiplication action of $C_0(X)$ by restriction to $Z$. One can easily check that $H_X$ is an ample $X$-module.

\begin{Def}
Define $\IC_{\operatorname{f}}[X]$ to be the $*$-algebra of all bounded functions $T:Z\times Z\to\K:=\K(H_0)$, also viewed as $Z\times Z$ matrices, such that\begin{itemize}
\item[(1)]for any $S>0$, there exists $C>0$ such that for any bounded subset $B\subset X$ with the diameter $diam(B)<S$, we have that
$$\#\{(x,y)\in B\times B\cap Z\times Z\mid T(x, y)\ne 0\}\leq C$$
\item[(2)]there exists $L>0$ such that
$$\#\{y\in Z|T(x,y)\ne0\}<L,\qquad\#\{y\in Z\mid T(y,x)\ne 0\}<L$$
for all $x\in Z$;
\item[(3)]there exists $R>0$ such that $T(x,y)=0$ whenever $d(x,y)>R$ for $x,y\in Z$.
\end{itemize}
\end{Def}

Note that in general $\IC_{\operatorname{f}}[X]$ is a $*$-subalgebra of $\IC[X,H_X]$, where $H_X=\ell^2(Z)\otimes H_0$, and the norm completions of these two $*$-algebras are the Roe algebra $C^*(X)$. We will use $\IC_{\operatorname{f}}[X]$ to replace $\IC[X]$. It is obvious that $\IC_{\operatorname{f}}[X]$ is naturally equal to $\IC[X]$ when $X$ is a discrete metric space with bounded geometry. 

\begin{Def}
The \emph{localization algebra} of $X$, denoted by $C^*_L(X)$, is defined to be the closure of the set
$$\left\{f:\IR_+\to C^*(X)\mid f\text{ is bounded and uniformly continuous, }\prop(f(t))\to0\text{ as }t\to\infty\right\},$$
under the norm $\|f\|=\sup_{t\in\IR_+}\|f(t)\|_{C^*(X)}$ for each path $f$.
The \emph{$K$-homology} of $X$, denoted by $K_*(X)$, is defined to be
$$K_*(X)=K_*(C^*_L(X)).$$
\end{Def}

Notice that there is a natural evaluation map
$$ev:C^*_L(X)\to C^*(X),\qquad f\mapsto f(0).$$
The evaluation map induces a homomorphism on $K$-theory
$$ev_*:K_*(C^*_L(X))\to K_*(C^*(X)).$$
We would like to point out that the Roe algebra encodes the large-scale geometry of a metric space, which makes its $K$-theory hard to compute. On the other hand, K-homology is relatively more computable. Consequently, the map induced by the evaluation map on $K$-theory serves as a powerful tool for understanding the $K$-theory of the Roe algebra. 
However, the map $ev_*$ is not guaranteed to be an isomorphism in all cases. To define the assembly map, we shall also ``coarsen" the metric space in the left side by using the Rips complex.

\begin{Def}
Let $X$ be a discrete metric space with bounded geometry. For each $d\geq 0$, the \emph{Rips complex} $P_d(X)$ at scale $d$ is defined to be the simplicial polyhedron in which the set of vertices
is $X$, and a finite subset $\{x_0,x_1,\cdots,x_n\}\subseteq X$ spans a simplex if and only if $d(x_i,x_j )\leq d$ for all $0\leq i,j\leq n$.
\end{Def}

We shall define the spherical metric $d$ on $P_d(X)$. On each path connected component of $P_d(X)$, the spherical metric is the maximal metric whose restriction to each simplex $\{\sum_{i=0}^nt_ix_i\mid t_i\geq0,\sum_it_i=1\}$ is the metric obtained by identifying the simplex with $S^n_+$ via the map
$$\sum_{i=0}^nt_ix_i\to\left(\frac{t_0}{\sqrt{\sum_{i=0}^nt_i^2}},\cdots,\frac{t_n}{\sqrt{\sum_{i=0}^nt_i^2}}\right),$$
where $S^n_+=\{(x_0,\cdots,x_n)\in\IR^{n+1}\mid x_i\geq0,\sum_{i=0}^nx_i^2=1\}$ endowed with the standard Riemannian metric on the unit $n$-sphere. The distance between a pair of points in different connected components is defined to be infinity.

If $d<d'$, then $P_d(X)$ is included in $P_d(X)$ as a subcomplex via a simplicial map. By taking the inductive
limit, we obtain the assembly map
$$\mu:\lim_{d\to\infty}K_*(P_d(X))\to\lim_{d\to\infty}K_*(C^*(P_d(X)))\cong K_*(C^*(X)).$$

\begin{CNCon}
If $X$ is a discrete metric space with bounded geometry, then the assembly map $\mu$ is injective.
\end{CNCon}

\subsection{The coarse Novikov conjecture at infinity}\label{subsec: CNinfty}

In this subsection, we outline the strategy for proving Theorem \ref{main result}. This strategy involves a reduction of the coarse Novikov conjecture to its counterpart "at infinity", as originally introduced in \cite{CWY2013} and subsequently extended in \cite{GLWZ2022}.

Let $(1\to N_n\to G_n\to Q_n\to 1)_{n\in\IN}$ be a sequence of extensions of finite groups with uniformly finite generating subsets. Each $G_n$ is equipped with word length metric which naturally restricts to give a metric on the normal subgroup $N_n\subset G_n$, and each $Q_n$ is endowed with the quotient metric. Let $G=\bigsqcup_{n\in\IN}G_n$, $N=\bigsqcup_{n\in\IN}N_n$ and $Q=\bigsqcup_{n\in\IN}Q_n$ be the coarse disjoint unions. We say that a sequence $(1\to N_n\to G_n\to Q_n\to 1)_{n\in\IN}$ has an \emph{``FCE-by-FCE"} structure, if the coarse disjoint unions $N$ and $Q$ admit fibred coarse embeddings into Hilbert space.

For each $d>0$, we choose a countable dense subset $Z_d\subseteq P_d(G)$ with $Z_{d}\subseteq Z_{d'}$ for $d<d'$. Denote $Z_{d,n}=Z_d\cap P_d(G_n)$, for all $d\geq 0$, $n\in\IN$.

\begin{Def}\label{Roe algebra at infinity}
For each $d\geq0$, define $\ICu[\PdG]$ to be the set of all equivalence classes $T=[(T^{(0)},\cdots,T^{(n)},\cdots)]$ of sequences $(T^{(0)},\cdots,T^{(n)},\cdots)$ satisfying the following:
\begin{itemize}
\item[(1)]$(T^{(n)})_{n\in\IN}$ is a collection of uniformly bounded functions, where $T^{(n)}$ is a function from $Z_{d,n}\times Z_{d,n}$ to $\mathcal K$ for all $n\in\IN$;
\item[(2)]for any $S>0$, there exists $C>0$ such that for any bounded subset $B\subset P_d(G_n)$ with the diameter $diam(B)<S$, we have that
$$\#\{(x,y)\in B\times B\cap Z_{d,n}\times Z_{d,n}\mid T^{(n)}(x, y)\ne 0\}\leq C;$$
\item[(3)]there exists $L>0$ such that
$$\#\{y\in Z_{d,n}|T^{(n)}(x,y)\ne0\}<L,\qquad\#\{y\in Z_{d,n}\mid T^{(n)}(y,x)\ne 0\}<L$$
for all $x\in Z_{d,n}$, $n\in\IN$;
\item[(4)]there exists $R>0$ such that $T^{(n)}(x,y)=0$ whenever $d(x,y)>R$ for $x,y\in Z_{d,n}$, $n\in\IN$. The least such $R$ is called the propagation of the sequence $(T^{(0)},\cdots,T^{(n)},\cdots)$.
\end{itemize}
The equivalence relation $\sim$ on these sequences is defined by
$$(T^{(0)},\cdots,T^{(n)},\cdots)\sim(S^{(0)},\cdots,S^{(n)},\cdots)$$
if and only if
$$\lim_{n\to\infty}\sup_{x,y\in Z_{d,n}}\|T^{(n)}(x,y)-S^{(n)}(x,y)\|_{\K}=0.$$
\end{Def}

Viewing each $T^{(n)}$ as a $Z_{d,n}\times Z_{d,n}$-matrix, the product structure for $\ICu[\PdG]$ is
defined to be the usual matrix operations. Prior to introducing the norm on $\ICu[\PdG]$, we shall first discuss the ghost ideal of Roe algebras. Denoted by $\prod_{n\in\IN}^uC^*(P_d(G_n))$ the $C^*$-subalgebra of $\prod_{n\in\IN}C^*(P_d(G_n))$ generated by all sequences with uniformly finite propagation. An element $(T^{(n)})\in\prod_{n\in\IN}^uC^*(P_d(G_n))$ is called a \emph{ghost} operator, if for any fixed $R>0$, we have
$$\lim_{n\to\infty}\sup_{g,h\in G_n}\|\chi_{B(g,R)}T^{(n)}\chi_{B(h,R)}\|=0.$$
 All ghost operators in $\prod_{n\in\IN}^uC^*(P_d(G_n))$ forms a closed ideal, which is called the \emph{ghost ideal} and denoted by $I_G$. It is well-known that the ghost ideal contains all compact operators, and a ghost operator with finite propagation must be compact (see \cite{Roe2003}).

\begin{Def} 
Let $(G_n)_{n \in \mathbb{N}}$ be a sequence of bounded metric spaces with bounded geometry.
\begin{itemize}
    \item[(1)] The \emph{Roe algebra at infinity}, denoted by $C^*_{u,\infty}(\PdG)$, is defined to be the completion of $\ICu[\PdG]$ with respect to the norm
$$\|T\|=\inf\left\{\|(T^{(n)}+S^{(n)})_{n\in\IN}\|:\ (S^{(n)})_{n\in\IN}\text{ is a ghost operator}\right\}.$$
This norm is is referred to as the \emph{reduced norm}.

   \item[(2)] The \emph{maximal Roe algebra at infinity}, denoted by $C^*_{u,{\rm max},\infty}(\PdG)$, is defined to be the completion of $\ICu[\PdG]$ with respect to the norm
$$\|T\|_{\rm max}=\sup\{\|\phi(T)\|\mid \phi:\ICu[\PdG]\to\B(\H_{\phi})\text{ is a $*$-representation}\}.$$
This norm is called the \emph{maximal norm}.
\end{itemize}
\end{Def}

Based on the definitions, it is evident that the Roe algebra at infinity is, in fact, the quotient algebra of the Roe algebra over the ghost ideal.

By the universal property of the maximal norm, there is a canonical quotient $*$-homomorphism
$$\pi_{\infty}:C^*_{u,{\rm max},\infty}(\PdG)\to C^*_{u,\infty}(\PdG).$$

\begin{Def}\label{localization algebra}
Let $\ICuL[\PdG]$ be the set of all bounded, uniformly norm-continuous functions
$$f:\IR_+\to \ICu[\PdG]$$
such that $f(t)$ is of the form $f(t)=[f^{(0)}(t),\cdots,f^{(n)}(t),\cdots]$ and satisfies that there exists a bounded function $R(t):\IR_+\to \IR_+$ with $\lim_{t\to\infty}R(t)=0$ such that $(f^{(n)}(t))(g,h)=0$ whenever $d(g,h)>R(t)$ and $n\in\IN$.

The localization algebra at infinity, denoted by $\CauL(\PdG)$, is defined to be the norm completion of $\ICuL[\PdG]$, under the norm
$$\|f\|=\sup_{t\in\IR_+}\|f(t)\|.$$
\end{Def}

\begin{Rem}\label{Remark on localization algebra at infinity}
It was proved in \cite[Proposition 4.3]{CWY2013} (also in \cite[Section 5]{GLWZ2022}) that there is a canonical isomorphism
$$K_*(\CauL(\PdG))\cong\frac{\prod_{n=0}^{\infty}K_*(P_d(G_n))}{\bigoplus_{n=0}^{\infty}K_*(P_d(G_n))}.$$
Here, we utilize the reduced Roe algebras at infinity, rather than the maximal version, to define the localization algebras at infinity. Actually, one can also perform this with the maximal version, and it gives rise to the same group on the level of $K$-theory, due to a cutting and pasting method (cf. \cite{Yu1997}). 
\end{Rem}

There is a natural evaluation homomorphism $e: \CauL(\PdG)\to \Cau(\PdG)$ defined by $e(g)=g(0)$ for $g\in\CauL(\PdG)$. The assembly map at infinity is defined to be the inductive limit of the evaluation map:
$$\mu_{\infty}:\lim_{d\to\infty}K_*(\CauL(\PdG))\to\lim_{d\to\infty}K_*(\Cau(\PdG)).$$
There is a canonical quotient $C^*$-homomorphism
$$\Phi:C^*(P_d(G))\to\Cau(\PdG).$$
The interested reader can also find a detailed discussion of this quotient map in \cite[Section 3]{CWW2014} or \cite[Section 5.1]{GLWZ2022}.

For any $d\geq 0$, there exists $N_d\in N$ large enough such that $d(G_n,G_m)> d$ for all distinct $n,m\in N_d$. It follows that $P_d(G)=P_d(G_{N_d})\bigsqcup\left(\sqcup_{n\geq N_d}P_d(G_n)\right)$ and
$$K_*(P_d(G))=K_*(P_d(G_{N_d}))\bigoplus\prod_{n=N_d}^{\infty}K_*(P_d(G_n)).$$
By definitions, we have the following commutative diagram:
\begin{equation}\label{CNC to CNC at infty}\xymatrix{0\ar[d]&&0\ar[d]\\K_*(P_d(G_{N_d}))\oplus\bigoplus_{n=N_d}^{\infty}K_*(P_d(G_n))\ar[rr]\ar[d]&&K_*(\mathcal{K})\ar[d]\\
K_*(P_d(G))\ar[rr]^{\mu}\ar[d]&&K_*(C^*(P_d(G)))\ar[d]^{\Phi_*}\\
\frac{\prod_{n=0}^{\infty}K_*(P_d(G_n))}{\bigoplus_{n=0}^{\infty}K_*(P_d(G_n))}\ar[rr]^{\mu_{\infty}}\ar[d]&&K_*(C^*_{u,\infty}(P_d(G_n)))\\0&&}\end{equation}
Passing to the inductive limit as $d$ tends to $\infty$, the top horizontal map is an isomorphism. The inclusion $\K\to C^*(P_d(G))$
induces an injection on $K$-theory (cf. \cite[Proposition 2.10]{OY2009}). Remark \ref{Remark on localization algebra at infinity}, combined with a diagram chasing argument, implies that the injectivity of $\mu$ is a result of the injectivity of $\mu_{\infty}$. 

In the rest of this paper, we shall prove the following theorem which, as discussed above, implies Theorem \ref{main result}.

\begin{Thm}\label{CNC at infty}
Let $(1\to N_n\to G_n\to Q_n\to 1)_{n\in\IN}$ be a sequence of extensions of finite groups with uniformly finite generating subsets which admits a ``FCE-by-FCE" structure. Then the assembly map at infinity for $(G_n)_{n\in\IN}$ is injective. 
\end{Thm}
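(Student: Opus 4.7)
The plan is to prove Theorem \ref{CNC at infty} by a Dirac--dual-Dirac argument at infinity in which the two fibred coarse embeddings play distinct roles: the FCE of $Q$ is used to build twisted ``coefficient'' algebras, while the FCE of $N$ is used to compute their $K$-theory. Concretely, I would lift the fibred-coarse-embedding data for $Q=\bigsqcup Q_n$ along the quotients $G_n\twoheadrightarrow Q_n$ to obtain a field of affine Hilbert bundles $V_n$ over the $G_n$, and use it to define a twisted Roe algebra at infinity $\Cau(\PdGA)$, a twisted localization algebra at infinity $\CauL(\PdGA)$, Bott maps $\beta,\beta_L$, and a twisted evaluation $e_{\mathrm{tw}}$. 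These organize into the commutative square
\[
\begin{tikzcd}
K_*(\CauL(\PdG)) \arrow[r, "(\beta_L)_*"] \arrow[d, "e_*"'] & K_*(\CauL(\PdGA)) \arrow[d, "(e_{\mathrm{tw}})_*"] \\
K_*(\Cau(\PdG)) \arrow[r, "\beta_*"'] & K_*(\Cau(\PdGA))
\end{tikzcd}
\]
after taking the inductive limit as $d\to\infty$. If both $(\beta_L)_*$ and $(e_{\mathrm{tw}})_*$ are isomorphisms, then a diagram chase yields injectivity of $e_*=\mu_\infty$, which via the diagram \eqref{CNC to CNC at infty} gives Theorem \ref{main result}.

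For the isomorphism $(e_{\mathrm{tw}})_*$, I would exploit the FCE of $N$: the fibres of the quotient $G_n\to Q_n$ are cosets of $N_n$, and a cutting-and-pasting argument along the cover of $P_d(G_n)$ by $l_n$-neighborhoods of those cosets reduces, through a Mayer--Vietoris sequence in $K$-theory, to the local twisted evaluation isomorphism on pieces that look like Hilbert space, which is the standard twisted Roe-algebra Bott argument from \cite{Yu2000}. The intermediate ideals produced by this decomposition are naturally completions in the maximal norm at infinity, and the max=reduced theorem for Roe algebras at infinity established in Section \ref{sec: K-amenable at infinifty} is precisely what transfers the conclusion to the reduced algebras used in defining the assembly map at infinity. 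For $(\beta_L)_*$, the identification in Remark \ref{Remark on localization algebra at infinity} reduces matters to a uniform Bott isomorphism between $K_*(P_d(G_n))$ and the twisted $K$-theory of $P_d(G_n)$ with coefficients in $\mathcal A(V_n)$; the FCE of $Q$ trivializes $V_n$ on $l_n$-balls, so the Bott map is locally tensoring with the Bott element of a genuine Hilbert space, and a Mayer--Vietoris / five-lemma argument glues these local Bott periodicities into a global isomorphism at infinity.

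The main obstacle I anticipate is the isomorphism $(e_{\mathrm{tw}})_*$. The subtlety is that the cutting-and-pasting argument must be simultaneously compatible with (a) the twist by the $V_n$-bundle coming from the $Q$-direction, (b) the FCE of $N$, which only supplies Hilbert-space coordinates on balls of radius $l_n$ in the $N$-direction, and (c) the quotient by the ghost ideal at infinity. Controlling these three ``directions'' at once is what forces one to work with the maximal completion in intermediate steps and then to invoke the max=reduced theorem of Section \ref{sec: K-amenable at infinifty}; without it, the relevant Mayer--Vietoris sequences would fail to be exact at the level of the reduced algebra at infinity, and the diagram chase that yields injectivity would collapse.
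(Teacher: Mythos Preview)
Your overall architecture matches the paper's: build twisted algebras from the $Q$-FCE, set up the commutative square with Bott maps and twisted evaluation, prove $(\beta_L)_*$ and $(e_{\mathrm{tw}})_*$ are isomorphisms, and diagram-chase to injectivity of $\mu_\infty$. That is exactly Theorems \ref{twisted CBC for G_n} and \ref{BottL} followed by the proof at the end of Section \ref{sec: Proof of main result}.

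However, your account of how $(e_{\mathrm{tw}})_*$ is obtained mislocates two key steps. First, the cutting-and-pasting in the paper is done in the \emph{coefficient} direction: one decomposes over coherent systems of open subsets of $\IR_+\times V_n$ (Definition \ref{separate}, Lemma \ref{ideal}), not along a cover of $P_d(G_n)$ by neighborhoods of cosets. The support condition (5) in Definition \ref{twist Roe algebra for FCE-by-FCE} then forces the resulting separated pieces to be carried by neighborhoods $\mathcal N_{P_d(G_n)}(\pi^{-1}(\gamma),S)$ of cosets (Proposition \ref{cut}), so the effect resembles what you describe, but cutting directly in $P_d(G_n)$ would not produce ideals of the twisted Roe algebra. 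Second, and more importantly, these pieces do \emph{not} ``look like Hilbert space'': they are coarsely equivalent to $N_n$, which only admits an FCE. So the endgame is not the standard twisted Bott argument of \cite{Yu2000}; rather, the twist by $\A(O_{n,\gamma,\gamma})$ becomes an honest tensor factor over each separated piece, and the problem reduces to the \emph{untwisted} coarse Baum--Connes conjecture at infinity for the sequence $(N_\gamma)_{\gamma\in\Gamma}$ (Theorem \ref{FCE CBC infty}). It is here, and only here, that the max=reduced theorem of Section \ref{sec: K-amenable at infinifty} enters: Corollary \ref{CBC at infinity for FCE} combines it with the maximal result of \cite{CWY2013} to conclude. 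The Mayer--Vietoris exactness for the intermediate ideals of the twisted algebra is handled by a partition-of-unity argument (Lemma \ref{ideal}) in the reduced completion, not by passing to maximal norms. Your $(\beta_L)_*$ sketch is essentially correct; the paper carries it out by dimension induction down to the $0$-dimensional case and pointwise Bott periodicity (Theorem \ref{BottL}).
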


\section{Maximal and reduced Roe algebras at infinity for FCE spaces}\label{sec: K-amenable at infinifty}

Before proving Theorem \ref{CNC at infty}, we first study the $K$-theory of the Roe algebra at infinity for a sparse metric space which admits a fibred coarse embedding into Hilbert space. More precisely, we shall prove the following theorem.

\begin{Thm}\label{K-amenable at infinity for FCE spaces}
Let $(X_n)_{n\in\IN}$ be a sequence of finite metric spaces with uniform bounded geometry which admits a fibred coarse embedding into Hilbert space. Then the canonical quotient map
\[\pi:C^*_{u,{\rm max},\infty}((P_d(X_n))_{n\in\IN})\to\Cau((P_d(X_n))_{n\in\IN})\]
induces an isomorphism on $K$-theory, i.e., the homomorphism
\[\pi_*:K_*\left(C^*_{u,{\rm max},\infty}((P_d(X_n))_{n\in\IN})\right)\xrightarrow{\cong} K_*\left(\Cau((P_d(X_n))_{n\in\IN})\right)\]
is an isomorphism.
\end{Thm}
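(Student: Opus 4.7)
The plan is to introduce twisted versions of both the maximal and reduced Roe algebras at infinity using the fibred coarse embedding data, and then compare the untwisted and twisted algebras via Bott-type maps. For each $n\in\IN$, use the local trivializations $t_x$ and the section $s$ from Definition \ref{FCE} to select a finite-dimensional affine subspace $V_n\subseteq H$ which contains, in each chart $B_{X_n}(x,l_n)$, the pushforward $t_x\circ s$. Form the Bott coefficient algebra $\mathcal A(V_n)=C_0(V_n,\Cl(V_n))$. One then defines twisted algebraic Roe algebras at infinity whose elements are equivalence classes of sequences $(T^{(n)})_{n\in\IN}$ of $Z_{d,n}\times Z_{d,n}$-matrices with entries in $\mathcal A(V_n)\wox\K$, satisfying the finiteness and uniform propagation conditions of Definition \ref{Roe algebra at infinity}, together with a $V_n$-support condition tied to the local trivializations. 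Taking maximal and reduced $C^*$-completions produces $C^*_{u,\mathrm{max},\infty}((P_d(X_n),\mathcal A(V_n))_{n\in\IN})$ and $\Cau((P_d(X_n),\mathcal A(V_n))_{n\in\IN})$, connected by a canonical quotient $\pi_{\mathcal A}$.

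The heart of the argument is showing that $(\pi_{\mathcal A})_*$ is an isomorphism on $K$-theory. The key observation is that the FCE trivialization data lets one express each twisted Roe algebra at infinity, up to $K$-theory, as an inductive limit (over growing trivialization radii) of $C^*$-algebras that locally look like Roe-type algebras of finite-dimensional affine subspaces tensored with $\K$ and graded by $\Cl$. Such local pieces are nuclear, so their max and reduced completions coincide, and a Mayer--Vietoris cutting-and-pasting argument along the inductive filtration, in the spirit of \cite{Yu1997}, transports this coincidence through to the $K$-theory of both twisted algebras at infinity.

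Next, construct Bott maps $\beta_{\max}$ and $\beta$ from the untwisted maximal and reduced Roe algebras at infinity to their twisted counterparts, defined entry-wise by tensoring each matrix coefficient $T^{(n)}(x,y)$ with a Bott function centered at the image $t_x(s(x))$ (or an appropriate local average thereof). Well-definedness on equivalence classes at infinity relies on the coherence condition $t_x\circ t_y^{-1}=t_{xy}$ on overlaps in Definition \ref{FCE}(2), together with $l_n\to\infty$. One then shows $(\beta_{\max})_*$ and $\beta_*$ are both isomorphisms by building dual-Dirac (index) maps at infinity using the FCE, adapting the geometric Dirac-dual-Dirac argument of \cite{CWY2013, GLWZ2022} to the algebra-at-infinity setting. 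In the commutative square
\[
\xymatrix@C=0.5cm@R=0.8cm{
K_*(C^*_{u,\mathrm{max},\infty}((P_d(X_n))_{n\in\IN})) \ar[r]^-{(\beta_{\max})_*} \ar[d]_-{\pi_*} & K_*(C^*_{u,\mathrm{max},\infty}((P_d(X_n),\mathcal A(V_n))_{n\in\IN})) \ar[d]^-{(\pi_{\mathcal A})_*} \\
K_*(\Cau((P_d(X_n))_{n\in\IN})) \ar[r]^-{\beta_*} & K_*(\Cau((P_d(X_n),\mathcal A(V_n))_{n\in\IN}))
}
\]
three of the four arrows are then isomorphisms, which forces $\pi_*$ to be one as well.

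The main obstacle I anticipate is rigorously establishing that $(\pi_{\mathcal A})_*$ is an isomorphism. While the nuclearity of the local twisted pieces is intuitively clear, one must design the twisted algebra at infinity so that the ghost-ideal quotient, the uniform propagation condition, and the FCE-localization in $V_n$ all interact coherently with the inductive-limit decomposition and the Mayer--Vietoris pasting. A secondary difficulty lies in ensuring that the dual-Dirac map for $\beta_{\max}$ extends continuously to the maximal completion, which typically requires selecting a representation uniformly bounded in $n$ so that the geometric argument survives the passage to infinity.
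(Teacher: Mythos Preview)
Your overall strategy matches the paper's closely: construct twisted Roe algebras at infinity with coefficients in $\mathcal A(V_n)$, prove that the max-to-reduced comparison map is an isomorphism on the twisted side via local nuclearity and a cutting-and-pasting argument, and then transport this back to the untwisted algebras via Bott/Dirac maps. Two points deserve comment.

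First, a minor strengthening you are underselling: the paper actually shows that $\pi^{\mathcal A}$ is a $*$-\emph{isomorphism} of $C^*$-algebras (Proposition \ref{twisted K-amenable at infinity for FCE spaces}), not merely a $K$-theory isomorphism. Your decomposition into nuclear local pieces and pushout diagrams gives exactly this, so you may as well claim it.

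Second, and more seriously, there is a gap in your final diagram chase. You assert that $(\beta_{\max})_*$ and $\beta_*$ are \emph{isomorphisms}, to be proved ``by building dual-Dirac (index) maps at infinity \dots\ adapting the geometric Dirac-dual-Dirac argument of \cite{CWY2013, GLWZ2022}.'' But the Dirac-dual-Dirac package in those references only yields $\alpha_*\circ\beta_*=\mathrm{id}$ (and $(\alpha_{\max})_*\circ(\beta_{\max})_*=\mathrm{id}$); this makes $\beta_*$ split \emph{injective}, not an isomorphism. With only injectivity of the vertical Bott maps, your $2\times 2$ square gives injectivity of $\pi_*$ but not surjectivity: given $y$ on the reduced side, you can pull $\beta_*(y)$ back through $\pi^{\mathcal A}_*$ to some $z$, but you have no mechanism forcing $z\in\mathrm{Im}((\beta_{\max})_*)$. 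Proving $\beta_*\circ\alpha_*=\mathrm{id}$ on the $\mathcal A$-twisted algebras at infinity (a uniform rotation homotopy) is not part of the standard argument and would require separate justification.

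The paper circumvents this by keeping track of the \emph{target} of the Dirac map: it introduces a third pair of algebras $C^*_{u,\bullet,\infty}((X_n,\mathcal S\mathcal K(L^2_n))_{n\in\IN})$ and works with a three-row diagram
\[
\begin{tikzcd}
K_{*+1}(C^*_{u,\max,\infty}((X_n)))\arrow[r,"\pi_*"]\arrow[d,"(\beta_{\max})_*"'] & K_{*+1}(\Cau((X_n)))\arrow[d,"\beta_*"]\\
K_*(C^*_{u,\max,\infty}((X_n,\mathcal A(V_n))))\arrow[r,"\pi^{\mathcal A}_*"]\arrow[d,"(\alpha_{\max})_*"'] & K_*(\Cau((X_n,\mathcal A(V_n))))\arrow[d,"\alpha_*"]\\
K_*(C^*_{u,\max,\infty}((X_n,\mathcal S\mathcal K(L^2_n))))\arrow[r,"\pi_*"] & K_*(\Cau((X_n,\mathcal S\mathcal K(L^2_n))))
\end{tikzcd}
\]
The $\mathcal S\mathcal K(L^2_n)$-coefficients are just a stabilization, so the bottom row is canonically identified with the top row in a way compatible with $\pi$. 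With this identification, $\alpha_*\circ\beta_*=\mathrm{id}$ on both columns and $\pi^{\mathcal A}_*$ an isomorphism in the middle, an elementary chase gives both injectivity and surjectivity of $\pi_*$. You should either add this third row, or supply an independent argument that $\beta_*$ is surjective on the Roe-algebra-at-infinity level.
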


\subsection{(Maximal and reduced) twisted Roe algebras at infinity}\label{subsec: Twisted algebras at infinity}

Theorem \ref{K-amenable at infinity for FCE spaces} is an analogy of \cite[Theorem 1.4]{JR2013} at infinity. The strategy is to exploit a geometric Dirac-dual-Dirac argument. Throughout this section, we will consistently assume that $X=\bigsqcup_{n\in\IN}X_n$ admits a fibred coarse embedding into the Hilbert space $\H$ as in Definition \ref{FCE}.

To utilize the geometric Dirac-dual-Dirac argument, we recall the $C^*$-algebra associated to the infinite dimensional Euclidean space introduced by N.~Higson, G.~Kasparov and J.~Trout in \cite{HKT1998}. Let $V\subseteq \H$ be a finite-dimensional affine space. Denoted by $V^0$ the finite-dimensional linear subspace of $\H$ consisting of differences of elements in $V$.  Let $\Cl(V^0)$ be the complexified Clifford algebra on $V^0$ and $\mathcal{C}(V)$ the graded $C^*$-algebra of continuous functions from $V_a$ to $\Cl(V^0)$ vanishing at infinity. Let $\mathcal{S}=C_0(\IR)$, graded according to even and odd functions. Define the graded tensor product
$$\mathcal{A}(V)=\mathcal{S}\wox\mathcal{C}(V).$$
If $V_a \subseteq V_b\subseteq\H$, we have a decomposition $V_b=V_{ba}^0+ V_a$, where $V_{ba}^0$ is the orthogonal complement of $V_a^0$ in $V_b^0$. For each $v_b \in V_b$, we have a unique decomposition $v_b=v_{ba}+v_a$, where $v_{ba} \in V_{ba}^0$ and $v_a\in V_a$. Note that $\C(V_b)=\C(V_{ba}^0)\wox\C(V_a)$.

\begin{Def}[\cite{HKT1998}]
Let $V_a,V_b$ be finite-dimensional affine subspaces of $\H$.\begin{itemize}
\item[(1)]If $V_a \subseteq V_b$, we define $C_{ba}$ to be the Clifford algebra-valued function $V^0_{ba} \rightarrow \Cl(V_{ba}^0)$, $v \mapsto v\in  V_{ba}^0 \subset \Cl(V_{ba}^0)$. Let $X$ be the function of multiplication by $x$ on $\mathbb{R}$. It can be viewed as a degree one and unbounded multiplier of $\mathcal{S}$.  Define a homomorphism $\beta_{ba}: \mathcal{A}(V_a) \rightarrow \mathcal{A}(V_b)\cong\S\wox\C(V_{ba}^0)\wox\C(V_a)$ by
$$\beta_{ba}(g\wox h)=g(X\wox 1+1\wox C_{ba})\wox h$$
for all $g\in \mathcal{S}$ and $h\in\mathcal{A}(V_a)$, and $g(X\wox 1+1\wox C_{ba})$ is defined by functional calculus of $g$ on the unbounded, essentially self-adjoint operator $X\wox 1+1\wox C_{ba}$.
\item[(2)]If $V_a\subseteq V_b$, for any subset $O\subset \IR_+\times V_a$, define
$$\overline{O}^{\beta_{ba}}=\left\{(t+v_{ba}+v_a)\in\IR_+\times V_b: (\sqrt{t^2+\|v_{ba}\|^2},v_a)\in O\right\}.$$
\end{itemize}\end{Def}

For any finite dimensional affine subspace $V_a$ of $H$, the algebra $C_0(\IR_+\times V_a)$ is included in $\A(V_a)$ as its center, (see \cite[Remark 7.7]{GWY2018} or \cite[Example 3.4]{GLWZ2023} for another explanation). For any function $a\in\A(V_a)$, the support of $a$, denoted by $\supp(a)$, is the complement of all $(t,v)\in \mathbb{R}_+ \times V_a$ such that there exists $g\in C_0(\mathbb{R}_+ \times V_a)$ with $g(t,v) \neq 0 $ and $g\cdot a=0$. 

\begin{Rem}\label{composition}
To study fibred coarse embeddings, we still need to consider compatibility between the Bott map and an isometry. Let $t: V\to W$ be an isometric bijection between two finite-dimensional affine spaces. There exists a canonical $*$-homomorphism $t_*:\C(V)\to\C(W)$ defined by
$$(t_*(h))(w)=h(t^{-1}(w)),$$
which induces a $*$-homomorphism $t_*:\A(V)\to\A(W)$, here we abuse notation slightly and use the same symbol $t_*$ for the map $1\wox t_*$. In the case when $t$ is not bijective, this map is defined by the composition
$$\A(V)\xrightarrow{t_*}\A(t(V))\xrightarrow{\beta_{t(V),W}}\A(W).$$
We still denote it by $t_*$.

If $V_a \subset V_b \subset V_c$ are finite dimensional affine subspaces of $\H$, then we have $\beta_{cb} \circ \beta_{ba}=\beta_{ca}$ (cf. \cite{HKT1998}). If $W_a\subset W_b\subset V_c$ are also finite-dimensional affine subspaces of $\H$ such that there exists $t: W_b\to V_b$ which is an affine isometry from $W_b$ onto $V_b$ mapping $W_a$ onto $V_a$. Then the canonical $*$-homomorphism induced by $t: \A(W_a)\to\A(V_a)$ gives rise to the following commutative diagram
$$\xymatrix{
\A(W_a)\ar[r]^{\cong}_{t_*}\ar[d]_{\beta_{W_b,W_a}}&\A(V_a)\ar[r]^{\beta_{V_c,V_a}}\ar[d]^{\beta_{V_b,V_a}}&\A(V_c)\ar[d]^{=}\\
\A(W_b)\ar[r]^{\cong}_{t_*}&\A(V_b)\ar[r]^{\beta_{V_c,V_b}}&\A(V_c)}.$$
The commutative diagram is useful when defining the product structure for twisted Roe algebras at infinity.
\end{Rem}

Recall that the sequence $(X_n)_{n\in\IN}$ admits a fibred coarse embedding into $\H$. We define
$$V_n=\text{affine-span}\left\{t_x(z)(s(z))\mid x\in X_n,z\in B(x,l_n)\right\}.$$
For each $x\in X_n$, we define
$$W_k(x)=\text{affine-span}\left\{t_x(z)(s(z))\mid z\in Z_n\cap B(x,k)\right\}\subseteq V_n.$$
If $z\in B(x,l_n)\cap B(y,l_n)$, then there exists an isometry $t_{xy}:W_k(y)\to V_n$ (not surjective) such that
$$t_{xy}(t_y(z)(s(z)))=t_x(z)(s(z)).$$
As discussed earlier, for sufficiently large $n\in\IN$, there exists a canonical map $\beta_{W_k(x),V_n}:\A(W_k(x))\to\A(V_n)$ and $t_{xy}$ induces a canonical $*$-homomorphism
$$(t_{xy})_*:\A(W_k(x))\to\A(V_n).$$

Now we are ready to define the twisted algebras at infinity.

\begin{Def}\label{twisted Roe algebra for FCE}
The \emph{algebraic twisted Roe algebra at infinity}, denoted by $\ICu[(X_n,\A(V_n))_{n\in\IN}]$, is defined to be the set of all equivalent classes $T=[(T^{(0)},\cdots,T^{(n)},\cdots)]$ of sequences $(T^{(n)})_{n\in\IN}$ such that\begin{itemize}
\item[(1)] each $\{T^{(n)}$ is a bounded map from $ X_n\times X_n$ to $ A(V_n)\wox\K\}$ such that 
$$
\sup_{n \in \mathbb{N}}\sup_{x,y\in X_{n}}\|T^n_{x,y}\|_{A(V_n)\wox\K\}}< \infty;
$$
\item[(2)] there exists $k>0$ such that $T^{(n)}(x,y)$ is in the image of $\beta_{W_k(x), V_n}$ for any $x,y\in X_n$ and $n\in\IN$, and the preimage can be written as a finite linear combination of elementary tensors from $\S\wox\C(W_k(x))\wox\K$;
\item[(3)] there exists $R>0$ such that, for all $n\in\IN$, $T^{(n)}(x,y)=0$ for all $x,y\in X_n$ with $d(x,y)>R$;
\item[(4)]there exists $r>0$ such that
\[\begin{split}\supp(T^{(n)}(x,y))&\subseteq B_{\IR_+\times V_n}(t_x(x)(s(x)),r)\\
&:=\{(t,v)\in\IR_+\times V_n\mid t^2+\|v-t_x(x)(s(x))\|^2<r^2\}\end{split}\]
for any $x,y\in X_n$ and $n\in\IN$;
\item[(5)]there exists $c>0$ such that for any $w\in\IR_+\times W_k(x)$ with norm no more than one, if $T_1(x,y)\in\A(W_k(x))\wox\K$ such that $\beta_{W_k(x),V_n}(T_1(x,y))=T(x,y)$, then the derivative of the function $T_1(x,y)$ in the direction $w$, denoted by $\nabla_w(T_1(x,y))$, exists in $\A(W_k(x))\ox\K$ with $\|\nabla_w(T_1(x,y))\|\leq c$ for any $x,y\in X_n$ and $n\in\IN$.
\end{itemize}
The equivalence relation $\sim$ on sequences is defined by
$$\left(T^{(n)}\right)_{n\in\IN}\sim \left(S^{(n)}\right)_{n\in\IN}$$
if and only if
$$\lim_{n\to\infty}\sup_{x,y\in X_{n}}\|T^{(n)}(x,y)-S^{(n)}(x,y)\|_{\A(V_n)}=0.$$
\end{Def}

The multiplication on $\ICu[(X_n,\A(V_n))_{n\in\IN}]$ is defined as follows. For any two elements $T=[(T^{(0)},\cdots,T^{(n)},\cdots)]$ and $S=[(S^{(0)},\cdots,S^{(n)},\cdots)]$ in $\ICu[(X_n,\A(V_n))_{n\in\IN}]$, the product is defined to be
$$TS=[((TS)^{(0)},\cdots,(TS)^{(n)},\cdots)]$$
where there exists a sufficiently large $N\in\IN$ which depends on the propagation of $T$, such that $(TS)^{(n)}=0$ for $n < N$ and
$$(TS)^{(n)}(x,y)=\sum_{z\in X_{n}}\left(T^{(n)}(x,z)\right)\cdot\left((t_{xz})_*\left(S^{(n)}(z,y)\right)\right).$$
It is routine to check that this product is well-defined, and the reader is also referred to \cite[Remark 5.6]{CWY2013} for details.

The involution on $\ICu[(X_n,\A(V_n))_{n\in\IN}]$ is defined by
$$[(T^{(0)},\cdots,T^{(n)},\cdots)]^*=[((T^*)^{(0)},\cdots,(T^*)^{(n)},\cdots)]$$
where
$$(T^*)^{(n)}(x,y)=(t_{xy})_*\left(\left(T^{(n)}(y,x)\right)^*\right)$$
for all but finitely many $n$, and $0$ otherwise. Then $\ICu[(X_n,\A(V_n))_{n\in\IN}]$ is made into a $*$-algebra. 

The norm on $\ICu[(X_n,\A(V_n))_{n\in\IN}]$ closely resembles the one discussed in \cite[Remark 6.4]{GLWZ2022}, and the norm in our setting is more straightforward. For each $n\in\IN$, define
\begin{equation}\label{Hilbert module E_n}E_n=\left\{f:X_n\to C(X_n,\A(V_n)\wox\K)\ \Big|\ \supp(f(x))\subseteq B\left(x,\frac{l_n}2\right)\right\}.\end{equation}
Here, $C(X_n,\A(V_n)\wox\K)$ is the algebra of all bounded maps $b: X_n\to \A(V_n)\wox\K$, and this algebra can be further identified with $\bigoplus_{x\in X_n}\A(V_n)\wox\K$ under the corresponding $b\leftrightarrow \oplus_{x\in X_n}b(x)$. Then $E_n$ forms a Hilbert $\A(V_n)\wox\K$-module as follows. For any $a\in \A(V_n)\wox\K$ and $f,f_1,f_2\in E_n$, the right module action, and the inner product are given by
$$((fa)(x))(y)=(f(x))(y)\cdot a,$$
$$\langle f_1,f_2\rangle=\sum_{x,y\in X_n}(f_1(x))(y)^*\cdot(f_2(x))(y).$$
For any $[(T^{(n)})_{n\in\IN}]\in\ICu[(X_n,\A(V_n))_{n\in\IN}]$, we fix a representative  $(T^{(n)})_{n\in\IN}$. The operator $T^{(n)}$ on $E_n$ is defined by
$$\left(\left(T^{(n)}f\right)(x)\right)(y)=\sum_{z\in X_n}(t_{yx})_*\left(T^{(n)}(x,z)\right)\cdot (f(z))(y).$$
We remark here that for given $(T^{(n)})_{n\in\IN}$, there exists some $N>0$ dependent on its propagation such that the action of $T^{(n)}$ on $E_n$ is well-defined for any $n>N$, since $l_n$ tends to $\infty$ as in Definition \ref{FCE}.

\begin{Rem}\label{reduced norm is well-defined}
Here, we will provide some additional details regarding the right module action. It is evident that $T^{(n)}$ acts as a bounded module homomorphism, so we will focus on demonstrating how the $*$-structure of $T^{(n)}$ aligns with the inner product.

For given $T=[(T^{(n)})_{n\in\IN}]$ , there exists $R>0$ such that $T^{(n)}(x,z)=0$ whenever $d(x,z)>R$ for $x,z\in Z_{d,n}$, $n\in\IN$. Then for sufficiently large $n$, by the definition, we obtain that
\begin{equation}\begin{split}\label{original equation}
\left\langle f_1, T^{(n)}f_2\right\rangle&=\sum_{x,y\in X_n}(f_1(x))(y)^*\cdot\left(\sum_{z\in X_n}(t_{yx})_*\left(T^{(n)}(x,z)\right)\cdot (f_2(z))(y)\right)\\
&=\sum_{x,y,z\in X_n}(f_1(x))(y)^*\cdot\left((t_{yx})_*\left(T^{(n)}(x,z)\right)\right)\cdot (f_2(z))(y).
\end{split}\end{equation}
Similarly, we can also compute that
\begin{equation}\begin{split}\label{star equation}
\left\langle \left(T^*\right)^{(n)}f_1, f_2\right\rangle&=\sum_{x,y\in X_n}\left(\sum_{z\in X_n}(t_{yx})_*\left(\left(T^*\right)^{(n)}(x,z)\right)\cdot (f_1(z))(y)\right)^*\cdot(f_2(x))(y)\\
&=\sum_{x,y,z\in X_n}(f_1(z))(y)^*\cdot\left((t_{yx})_*\left(\left(T^*\right)^{(n)}(x,z)\right)\right)^*\cdot (f_2(x))(y).
\end{split}\end{equation}
Combining the $*$-structure of $\ICu[(X_n,\A(V_n))_{n\in\IN}]$ with the fact that $t_{yz}=t_{yx}\circ t_{xz}$ whenever the compositions are defined, we have that
\begin{equation}\begin{split}\label{compute star}
\left((t_{yx})_*\left(\left(T^*\right)^{(n)}(x,z)\right)\right)^*&=\left((t_{yx})_*\circ(t_{xz})_*\left(\left(T^{(n)}(z,x)\right)^*\right)\right)^*\\&=\left((t_{yz})_*\left(T^{(n)}(z,x)\right)\right)
\end{split}\end{equation}
By substituting \eqref{compute star} into \eqref{star equation} and interchanging the roles of $x$ and $z$, one can easily obtain that \eqref{star equation} is equal to \eqref{original equation}. Therefore, we have established the compatibility of the representation with the $*$-structure. Similarly, we can also verify that the representation is compatible with the multiplication.
\end{Rem}

Let $B^*((X_n,\A(V_n))_{n\in\IN})$ be the completion of the $*$-algebra $\ICu[(X_n,\A(V_n))_{n\in\IN}]$ under the norm 
$$\|(T^{(n)})_{n\in\IN}\|=\limsup_{n\to\infty}\|T^{(n)}\|_{E_n}.$$
An element $[(T^{(n)})_{n\in\IN}]\in B^*((X_n,\A(V_n))_{n\in\IN})$ is called a ghost if for any $R>0$, one has that
$$\lim_{n\to\infty}\sup_{x,y\in X_n}\|\chi_{B(x,R)}T^{(n)}\chi_{B(y,R)}\|=0.$$
We denote by $I_G(\A)$ the ideal of all ghost elements in $B^*((X_n,\A(V_n))_{n\in\IN})$.

\begin{Def}\label{norms on twisted Roe algebra at infinity for FCE}
The \emph{reduced twisted Roe algebra at infinity}, denoted by $\Cau((X_n,\A(V_n))_{n\in\IN})$, is defined to be the completion of $\ICu[(X_n,\A(V_n))_{n\in\IN}]$ with respect to the norm
$$\|T\|=\inf\left\{\sup_{n\in\IN}\|T^{(n)}+S^{(n)}\|_{E_n}: (S^{(n)})_{n\in\IN}\in I_G(\A)\right\},$$
where the norm of each operator $T^{(n)}$ is given by the $*$-representation on $E_n$ as above. This norm is called the \emph{reduced norm}.

Similarly, the \emph{maximal twisted Roe algebra at infinity} $C^*_{u,{\rm max},\infty}((X_n,\A(V_n))_{n\in\IN})$ is defined to be the completion of $\ICu[(X_n,\A(V_n))_{n\in\IN}]$ with respect to the norm
$$\|T\|_{\rm max}=\sup\left\{\|\phi(T)\|: \phi: \ICu[(X_n,\A(V_n))_{n\in\IN}]\to\B(\H_{\phi})\text{ is a $*$-representation}\right\}.$$
This norm is called the \emph{maximal norm}.
\end{Def}

There is a canonical quotient map
\begin{equation}\label{twisted quotient map}\pi^{\A}:C^*_{u,{\rm max},\infty}((X_n,\A(V_n))_{n\in\IN})\to\Cau((X_n,\A(V_n))_{n\in\IN}).\end{equation}
The following proposition can be seen as an "at infinity" version of \cite[Proposition 3.7]{JR2013}.

\begin{Pro}\label{twisted K-amenable at infinity for FCE spaces}
The canonical quotient $\pi^{\A}$ in \eqref{twisted quotient map} is a $*$-isomorphism.
\end{Pro}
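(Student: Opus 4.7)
The plan is to establish the $*$-isomorphism $\pi^{\A}$ via a filtration argument that exploits the nuclearity of the coefficient algebra $\A(V_n)$ and the local trivializations provided by the FCE, in the spirit of \cite{JR2013}. Since the maximal norm automatically dominates the reduced one, it suffices to show that every $*$-representation $\phi$ of $\ICu[(X_n,\A(V_n))_{n\in\IN}]$ factors through the reduced completion.

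First, I would decompose the algebraic algebra as an increasing union of $*$-subalgebras $A(k,R,r,c)$ parameterized by the bounds $(k,R,r,c)$ appearing in Definition \ref{twisted Roe algebra for FCE}. For fixed such parameters, conditions (2) and (4) force each entry $T^{(n)}(x,y)$ to lie in $\beta_{W_k(x),V_n}(\A(W_k(x)))\wox\K$ with support in a ball of radius $r$ in $\IR_+\times V_n$. By the uniform bounded geometry of $(X_n)$ and the bound on $k$, $W_k(x)$ has uniformly bounded dimension, so the essential coefficient algebra on each entry is isomorphic to $C_0(D)\wox\Cl(W_k(x)^0)\wox\K$ for a fixed bounded $D\subset\IR_+\times W_k(x)$, which is type I and hence nuclear. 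Using the local trivializations $t_x$ and the cocycle $t_{xy}$ (which glues the fibers $\A(W_k(x))$ coherently on overlaps by Definition \ref{FCE}(2), and which is eventually globally defined since $l_n\to\infty$ while $R$ stays fixed), I would identify $A(k,R,r,c)$ with a $*$-subalgebra of a section-type algebra over a continuous field of nuclear $C^*$-algebras.

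Nuclearity of each fiber, combined with finite propagation $R$ and bounded geometry, which make the spatial matrix structure a uniform inductive limit of finite-dimensional matrix algebras, should yield that the $C^*$-completion of $A(k,R,r,c)$ is itself nuclear. In particular, maximal equals reduced on $A(k,R,r,c)$, and passing to the inductive limit over $(k,R,r,c)$ then produces the desired $*$-isomorphism $\pi^{\A}$. The main obstacle I anticipate is reconciling this continuous-field picture with the ghost ideal $I_G(\A)$, which is defined concretely via the modules $E_n$: one must verify that $I_G(\A)$ intersects $A(k,R,r,c)$ in exactly the ideal corresponding to the fiberwise ghost notion in the section algebra, so that passage to the quotient preserves both nuclearity and compatibility with inductive limits. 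A clean route may be to construct, using the FCE data and a partition of unity subordinate to the cover by balls of radius $l_n$, an approximate identity of positive type for the twisted algebra at infinity that directly implements amenability modulo ghosts, bypassing the need to argue nuclearity fiber-by-fiber.
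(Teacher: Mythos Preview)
Your proposal has a genuine gap at the step where you claim that ``finite propagation $R$ and bounded geometry \ldots\ make the spatial matrix structure a uniform inductive limit of finite-dimensional matrix algebras.'' This is false: for each fixed $n$ the space $X_n$ may be arbitrarily large, and the set of finite-propagation matrices over $X_n$ is \emph{not} an inductive limit of finite-dimensional matrix algebras, nor is it nuclear in general (indeed, failure of nuclearity of uniform Roe algebras is exactly what happens for expanders). Nuclearity of the coefficient algebra $\A(W_k(x))\wox\K$ alone does not propagate to nuclearity of $A(k,R,r,c)$; you need a mechanism that tames the \emph{spatial} index set, and fixing $(k,R,r,c)$ does not provide one.

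What you are missing is the role of condition (4) in Definition \ref{twisted Roe algebra for FCE}: the support of $T^{(n)}(x,y)$ lies in the ball $B(t_x(x)(s(x)),r)\subset\IR_+\times V_n$, and the lower control function $\rho_-$ of the FCE forces these balls to be \emph{disjoint} whenever the base points $x$ are sufficiently far apart in $X_n$. The paper exploits this by decomposing $X_n$ into finitely many ``$r$-separated'' subsets $\Gamma_n^{(j)}$ (via \cite[Lemma~6.7]{Yu2000}), so that the corresponding ideal supported on $O^{(j)}(r)$ splits as a direct sum over $\gamma\in\Gamma_n^{(j)}$ of algebras $\IC[Y_{\gamma,R}]\wox\A(O_{\gamma,n,\gamma})$ with $\#Y_{\gamma,R}$ \emph{uniformly bounded}. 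Only at this stage does the spatial part become a finite matrix algebra, and then nuclearity of the coefficients finishes the job (Lemma \ref{separated case: twisted K-amenable at infinity}). The full algebra is then recovered from these separated pieces by a pushout/Mayer--Vietoris argument over the finitely many $j$. Your filtration by $(k,R,r,c)$ is orthogonal to this decomposition and does not by itself cut the spatial index set down to uniformly finite pieces; the continuous-field and approximate-identity routes you sketch at the end would likewise need this separation step to get off the ground.
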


The proof strategy involves decomposing the twisted algebras at infinity into several smaller subalgebras, each of which possesses a unique $C^*$-norm. We then proceed to glue these subalgebras together, and the isomorphism can be established by applying five lemma. To do this, we shall first discuss ideals of the twisted algebras supported on certain open subsets of $\IR_+\times V_n$.

\begin{Def}\label{coherent system for FCE}
A collection $O=(O_{x,n})_{x\in X_n, n\in\IN}$ of open subsets of $\IR_+\times V_n$, is said to be a \emph{coherent system} if for all but finitely many $n\in\IN$, there exists $r>0$ such that the following conditions hold:\begin{itemize}
\item[(1)]for any $x,x'\in X_n$, and $C\subseteq B(x,l_n)\cap B(x',l_n)$, we have
$$O_{x,n}\cap B_{\IR_+\times W_C(x)}(t_x(y)(s(y)),r)=t_{xx'}(O_{x',n}\cap B_{\IR_+\times W_C(x')}(t_{x'}(y)(s(y)),r))$$
for any $y\in B_{X_n}(x,l_n)\cap B_{X_n}(x',l_n)$ where
$$W_C(x)=\textup{affine-span}\{t_x(w)(s(w))\mid w\in C\subseteq B(x,l_n)\cap B(x',l_n)\}$$
$$W_C(x')=\textup{affine-span}\{t_{x'}(w)(s(w))\mid w\in C\subseteq B(x,l_n)\cap B(x',l_n)\};$$
\item[(2)]for any $x\in X_n$, $n\in\IN$ and $k>0$, we have
$$\overline{O_{x,n}\cap(\IR_+\times W_k(x))}^{\beta_{W,W_k(x)}}= O_{x,n}\cap(\IR_+\times V_n)$$
where for $W_k(x)\subseteq V_n$ with $W'_k(x)=V_n\ominus W_k(x)$ and $v\in V_n$, there is a unique decomposition $v=(w',w)=W_k'(x)\oplus W_k(x)$ and
$$\overline{O}^{\beta_{W,W_k(x)}}=\left\{(\tau,v)\in\IR_+\times V_n\mid\left(\sqrt{\tau^2+\|w'\|^2},w\right)\in O\right\}.$$
\end{itemize}\end{Def}

For given $r>0$, set $O_{x,n}=\bigcup_{y\in B(x,l_n)}B(t_x(y)(s(y)),r)$ for any $x\in X_n$ and $n\in\IN$. It is easy to see such a collection $(O_{x,n})_{x\in X_n,n\in\IN}$ forms a typical example of a coherent system. For any two coherent collections $O^{(1)}$ and $O^{(2)}$, we say $O^{(1)}\subseteq O^{(2)}$ if $O^{(1)}_{x,n}\subseteq O^{(2)}_{x,n}$ for all $x\in X_n$ and $n\in\IN$. Denote $O^{(1)}\cup O^{(2)} = (O^{(1)}_{x,n}\cup O^{(2)}_{x,n})_{x\in X_n,n\in\IN}$ and $O^{(1)}\cap O^{(2)} = (O^{(1)}_{x,n}\cap O^{(2)}_{x,n})_{x\in X_n,n\in\IN}$. Note that if $O^{(1)}$ and $ O^{(2)}$ are coherent, so are both $O^{(1)}\cup O^{(2)}$ and $O^{(1)}\cap O^{(2)}$.

\begin{Def}
Let $O=(O_{x,n})_{x\in X_n,n\in\IN}$ be a coherent system of open subsets of $\IR_+\times V_n$, $n\in\IN$. Define $\ICu[(X_n,\A(V_n))_{n\in\IN}]_{O}$ to be the $*$-subalgebra of $\ICu[(X_n,\A(V_n))_{n\in\IN}]$ generated by the equivalence classes of the sequences $[(T^{(0)},\cdots,T^{(n)},\cdots)]$ such that
$$\supp(T^{(n)}(x,y))\subseteq O_{x,n}$$
for all $x,y\in X_{n}$ and $n\geq N$ for some $N\in\IN$ large enough depending on the propagation of the sequence $[(T^{(0)},\cdots,T^{(n)},\cdots)]$.

Define $\Cau((X_n,\A(V_n))_{n\in\IN})_O$ (the maximal version $C^*_{u,{\rm max},\infty}((X_n,\A(V_n))_{n\in\IN})_O$, respectively) to be norm closure of $\ICu[(X_n,\A(V_n))_{n\in\IN}]_O$ under the norm reduced (maximal, respectively) norm.
\end{Def}

We would like to point out that the maximal norm may not equal the supremum norm of all representations of $\ICu[(X_n,\A(V_n))_{n\in\IN}]_O$. 

\begin{Def}\label{separate}
Let $\Gamma_n$ be a subset of $X_n$ for each $n\in\IN$ and denote $\Gamma=(\Gamma_n)_{n\in\IN}$ and $r>0$. A coherent system $O=(O_{x,n})_{x\in X_n,n\in\IN}$ of open subsets of $(\IR_+\times V_n)_{n\in\IN}$ is said to be \emph{$(\Gamma,r)$-separate} if there exist open subsets $(O_{x,n,\gamma})_{\gamma\in\Gamma_n\cap B(x,l_n)}$ of $\IR_+\times V_n$ for all $x\in X_n$, $n\in\IN$, such that\begin{itemize}
\item[(1)]$O_{x,n}=\bigcup_{\gamma\in\Gamma_n\cap B(x,l_n)}O_{x,n,\gamma}$;
\item[(2)]$O_{x,n,\gamma}\cap O_{x,n,\gamma'}=\emptyset$ for distinct $\gamma, \gamma'\in\Gamma_n\cap  B(x,l_n)$;
\item[(3)]$O_{x,n,\gamma}\subseteq B(t_x(\gamma)(s(\gamma)),r)$ for each $\gamma\in \Gamma_n\cap B_{Q_n}(q,l_n)$.
\end{itemize}\end{Def}

Let $O$ be a $(\Ga,r)$-separated coherent system. For any $R>0$, we write $Y_{\gamma,R}$ for $B_{X_n}(\gamma,R)$ for any $\gamma\in \Ga_n$ for brevity. Note that the collection $Y=\{Y_{\gamma,R}\}_{\gamma\in\Ga}$ is uniformly bounded. Additionally, $(X_n)_{n\in\IN}$ has uniformly bounded geometry. Therefore, there exists a positive integer $N$ such that $\#Y_{\gamma,R}\leq N$.

For an open subset $U\subseteq V_n$, we define $\A_n(U)$ to be the ideal of $\A(V_n)$ generated by all functions whose supports are contained within $U$. To study the algebraic structure of $\ICu[(X_n,\A(V_n))_{n\in\IN}]_O$, we still need the following algebras. 
\begin{Def}\label{algebra A-infty}
Let $\IA_{\infty}[(Y_{\gamma,R}:\gamma\in\Ga_n)_{n\in\IN}]$ to be the subalgebra of
\begin{equation}\label{denomenator of A-infty}
\frac{\prod_{n\in\IN}(\bigoplus_{\gamma\in\Ga_n}\IC[Y_{\gamma,R}]\wox\A(O_{\gamma,n,\gamma}))}{\bigoplus_{n\in\IN}(\bigoplus_{\gamma\in\Ga_n}\IC[Y_{\gamma,R}]\wox\A(O_{\gamma,n,\gamma}))}
\end{equation}
consisting of all elements of the form $[(T^{(0)},\cdots,T^{(n)},\cdots)]$ where
$$T^{(n)}=\bigoplus_{\gamma\in\Gamma_n}T_{\gamma}^{(n)}$$
with
$$T^{(n)}_{\gamma}\in \IC[Y_{\gamma,R}]\wox\A(O_{\gamma, n,\gamma})$$
and the family $(T^{(n)}_{\gamma})_{\gamma\in\Ga_n,n\in\IN}$ satisfies the conditions in Definition \ref{twisted Roe algebra for FCE} with uniform constants. In particular, there exists $k>0$ such that $T^{(n)}_{\gamma}(x,y)$ is in the image of $\beta_{W_k(x),V_n}$ for any $n\in\IN$.
\end{Def}

\begin{Lem}\label{ICu is isomorphic to IA}
Let $O$ be a $(\Ga,r)$-separate coherent system for some $\Ga=(\Ga_n)_{n\in\IN}$ and $r>0$ as above. Then there exists a canonical isomorphism
\[\ICu[(X_n,\A(V_n))_{n\in\IN}]_O\cong\lim_{R\to\infty}\IA_{\infty}[(Y_{\gamma,R}:\gamma\in\Ga_n)_{n\in\IN}].\]
\end{Lem}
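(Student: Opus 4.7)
The plan is to construct the isomorphism via explicit mutually inverse $*$-homomorphisms that respect the filtration by propagation, and then pass to the inductive limit. For each $R > 0$, I would define $\Phi_R: \IA_{\infty}[(Y_{\gamma, R}: \gamma \in \Ga_n)_{n \in \IN}] \to \ICu[(X_n, \A(V_n))_{n \in \IN}]_O$ as follows. Given $T = [(T^{(n)})_n]$ with $T^{(n)} = \bigoplus_{\gamma \in \Ga_n} T^{(n)}_{\gamma}$ and entries $T^{(n)}_{\gamma}(x, y) \in \A(O_{\gamma, n, \gamma})$ for $x, y \in Y_{\gamma, R}$, set
\[\Phi_R(T)^{(n)}(x, y) = \sum_{\gamma \in \Ga_n \,:\, x, y \in Y_{\gamma, R}} (t_{x\gamma})_*\bigl(T^{(n)}_{\gamma}(x, y)\bigr),\]
where $(t_{x\gamma})_*$ is the $*$-homomorphism induced by the affine isometry $t_{x\gamma}: H \to H$ from Definition \ref{FCE}(2), viewed via Remark \ref{composition}. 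By condition (1) of Definition \ref{coherent system for FCE}, each $(t_{x\gamma})_*$ sends $\A(O_{\gamma, n, \gamma})$ into $\A(O_{x, n, \gamma}) \subseteq \A(V_n)$, so $\supp(\Phi_R(T)^{(n)}(x, y)) \subseteq O_{x, n}$ as required, while the uniform cardinality bound on $Y_{\gamma, R}$ (from uniformly bounded geometry) together with the uniform constants in Definition \ref{algebra A-infty} deliver the remaining quantitative conditions in Definition \ref{twisted Roe algebra for FCE}, including the smoothness bound (5) since $(t_{x\gamma})_*$ is affine and so preserves directional derivatives.

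For the inverse, given $T \in \ICu[(X_n, \A(V_n))_{n \in \IN}]_O$ with propagation $R$, I would use the $(\Ga, r)$-separation (Definition \ref{separate}) to decompose each entry uniquely as
\[T^{(n)}(x, y) = \sum_{\gamma \in \Ga_n \cap B(x, l_n)} T^{(n)}(x, y)|_{O_{x, n, \gamma}},\]
a finite sum of terms with pairwise disjoint supports (for $n$ large enough that $l_n$ exceeds the propagation). Then define
\[\Psi(T)^{(n)}_{\gamma}(x, y) = (t_{\gamma x})_*\bigl(T^{(n)}(x, y)|_{O_{x, n, \gamma}}\bigr) \in \A(O_{\gamma, n, \gamma})\]
for $x, y \in Y_{\gamma, R}$ and zero otherwise, which produces an element of $\IA_{\infty}[(Y_{\gamma, R})_{\gamma}]$.

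The crucial algebraic step is verifying multiplicativity. Expanding $\Phi_R(T) \cdot \Phi_R(S)$ via the product formula of Section \ref{subsec: Twisted algebras at infinity}, the cocycle identity $(t_{xz})_* \circ (t_{z\gamma'})_* = (t_{x\gamma'})_*$, valid on the appropriate intersections by Definition \ref{FCE}(2) and formalized in Remark \ref{composition}, rewrites every cross term as a product of two elements of $\A(V_n)$ supported respectively in $O_{x, n, \gamma}$ and $O_{x, n, \gamma'}$. The disjointness $O_{x, n, \gamma} \cap O_{x, n, \gamma'} = \emptyset$ for $\gamma \ne \gamma'$ (Definition \ref{separate}(2)) then forces these pointwise products in $\A(V_n)$ to vanish, so only the diagonal terms $\gamma = \gamma'$ contribute and reproduce $\Phi_R(TS)$. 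Compatibility with the involution uses $(t_{xy})_* \circ (t_{yx})_* = \mathrm{id}$, and the identities $\Phi_R \circ \Psi = \mathrm{id}$, $\Psi \circ \Phi = \mathrm{id}$ follow from the same disjointness together with the inverse relation of the trivializations. Passing to the inductive limit over $R$ gives the desired isomorphism, and the equivalence relations at infinity on both sides match because each $(t_{\cdot\cdot})_*$ is a $C^*$-isometry.

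The main obstacle is executing the multiplicativity check cleanly while keeping track of how large $n$ must be for all invoked trivializations $t_{x\gamma}$, $t_{xz}$, $t_{z\gamma}$ to be simultaneously defined, which requires $x, z, \gamma$ to lie in the common intersection of the various $B(\cdot, l_n)$'s. Since $l_n \to \infty$, for each fixed propagation $R$ only finitely many $n$ fail this, so the resulting discrepancy lives in the ideal $\bigoplus_{n \in \IN}(\bigoplus_{\gamma}\IC[Y_{\gamma,R}]\wox\A(O_{\gamma,n,\gamma}))$ on the right and in the zero-at-infinity ideal on the left, and vanishes after passing to the respective quotients.
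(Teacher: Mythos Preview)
Your proposal is correct and takes essentially the same approach as the paper, which gives no details beyond citing \cite[Lemma~6.12]{CWY2013} for the construction you have spelled out, after first observing that ghost elements in $\prod^u_{\gamma\in\Ga}C^*(Y_{\gamma,R})$ are automatically compact because the pieces $Y_{\gamma,R}$ are uniformly bounded.
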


\begin{proof}
Since the sequence $\{Y_{\gamma,R}\}_{\gamma\in\Ga}$ is uniformly bounded, all ghost elements in $\prod^u_{\gamma\in\Ga}C^*(Y_{\gamma,R})$ must be compact. The rest of the proof is the same with that of \cite[Lemma 6.12]{CWY2013}.
\end{proof}

\begin{Lem}\label{separated case: twisted K-amenable at infinity}
Let $O$ be a $(\Ga,r)$-separate coherent system as in Definition \ref{separate}. Then the maximal norm on $\ICu[(X_n,\A(V_n))_{n\in\IN}]_O$ is equal to the reduced norm.
\end{Lem}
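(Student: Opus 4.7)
The plan is to combine Lemma~\ref{ICu is isomorphic to IA} with the nuclearity of the building blocks. By that lemma, it suffices to show that on each $\mathbb{A}_\infty[(Y_{\gamma,R}: \gamma \in \Gamma_n)_{n \in \mathbb{N}}]$ the maximal and reduced $C^*$-completions agree, since both completions commute with the inductive limit over $R$, and the equality on $\mathbb{C}_{u,\infty}[(X_n, \mathcal{A}(V_n))_{n \in \mathbb{N}}]_O$ follows.

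The key structural input is $(\Gamma, r)$-separateness. By items (2) and (3) of Definition~\ref{separate}, the sets $O_{\gamma,n,\gamma}$ are pairwise disjoint as $\gamma$ varies, and by uniform bounded geometry $\#Y_{\gamma,R} \leq N$ for a constant $N$ independent of $\gamma, n$. Consequently each building block $\mathbb{C}[Y_{\gamma,R}] \hat\otimes \mathcal{A}(O_{\gamma,n,\gamma})$ is type I: $\mathbb{C}[Y_{\gamma,R}]$ is a matrix algebra of size at most $N$ over $\mathcal{K}(H_0)$, and $\mathcal{A}(V) = \mathcal{S} \hat\otimes \mathcal{C}(V)$ is the graded tensor product of a commutative algebra with finite-dimensional Clifford algebras. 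Disjointness of supports then forces the summands to be mutually orthogonal, so $\bigoplus_{\gamma}$ is genuinely a $c_0$-direct sum with a unique $C^*$-norm given by $\sup_\gamma$.

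Next, I would realize $\mathbb{A}_\infty[(Y_{\gamma,R}: \gamma \in \Gamma_n)_{n \in \mathbb{N}}]$ as a $*$-subalgebra of the $C^*$-algebra
\[
\mathcal{B} \;:=\; \prod_{n} \Bigl(\,\bigoplus_{\gamma \in \Gamma_n} \mathbb{C}[Y_{\gamma,R}] \hat\otimes \mathcal{A}(O_{\gamma,n,\gamma})\,\Bigr) \,\Big/\, \bigoplus_{n} \Bigl(\,\bigoplus_{\gamma \in \Gamma_n} \mathbb{C}[Y_{\gamma,R}] \hat\otimes \mathcal{A}(O_{\gamma,n,\gamma})\,\Bigr),
\]
endowed with its $\ell^\infty$-quotient norm, and verify that the norm inherited from $\mathcal{B}$ coincides with the reduced norm of Definition~\ref{norms on twisted Roe algebra at infinity for FCE}. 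The point here is that within uniformly bounded-cardinality pieces every ghost with finite propagation is compact, so the ghost ideal is absorbed into the $\bigoplus_n$ quotient and contributes nothing extra.

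Finally, for any $*$-representation $\phi$ of $\mathbb{A}_\infty$, I would use the orthogonality of the $O_{\gamma,n,\gamma}$ to decompose $\phi$ block-diagonally along $\gamma$; on each block $\phi$ is automatically bounded by the unique type I norm of that summand, and a further decomposition along $n$ using the quotient structure gives $\|\phi(T)\| \leq \|T\|_{\mathcal{B}} = \|T\|_{\text{red}}$. The main obstacle I anticipate is making the block decomposition rigorous: the projections onto each $\gamma$- and $n$-block lie in the multiplier algebra of $\mathcal{B}$ rather than in $\mathbb{A}_\infty$ itself, so I would need to exhibit them as strong limits of elements built from $\mathbb{A}_\infty$ (e.g., approximate units supported on balls around $\gamma$ and on cofinite tails in $n$), using that any finite-propagation $T$ eventually commutes with such cutoffs.
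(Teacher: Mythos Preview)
Your proposal is correct and follows essentially the same route as the paper: reduce via Lemma~\ref{ICu is isomorphic to IA} to $\IA_\infty[(Y_{\gamma,R}:\gamma\in\Ga_n)_{n\in\IN}]$, use the uniform bound $\#Y_{\gamma,R}\leq N$ to embed each block in $M_N(\K)\wox\A(O_{\gamma,n,\gamma})\cong M_N(\K\wox C_0(O_{\gamma,n,\gamma})\wox\Cl(V_n))$, and invoke nuclearity of these type~I pieces to conclude uniqueness of the $C^*$-norm. The paper simply asserts that both norms on the product are the supremum of the block norms, whereas you spell out the block-diagonal decomposition of an arbitrary representation (via multiplier projections and approximate units) to justify this for the maximal side; this extra care is reasonable but not a different argument.
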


\begin{proof}
By Lemma \ref{ICu is isomorphic to IA}, it suffices to show that the maximal norm and the reduced norm coincide on the algebra $\IA_{\infty}[(Y_{\gamma,R}:\gamma\in\Ga_n)_{n\in\IN}]$.

For fixed $R>0$, there exists $N>0$ such that $\#Y_{\gamma,R}\leq N$ for any $\gamma\in\Ga_n$ and $n\in\IN$. Therefore, $\IC[Y_{\gamma,R}]$ can be viewed as a subalgebra of $M_N(\IC)\wox\K$. Then, $\prod_{n\in\IN}(\bigoplus_{\gamma\in\Ga_n}\IC[Y_{\gamma,R}]\wox\A(O_{\gamma,n,\gamma}))$ is $*$-isomorphic to a subalgebra of
\[\begin{split}\prod_{\gamma\in\Ga_n,n\in\IN}M_N(\IC)\wox\K\wox\A(O_{\gamma, n, \gamma})&\cong \prod_{\gamma\in\Ga_n,n\in\IN}M_N\left(\K\wox\A(O_{\gamma, n, \gamma})\right)\\
&\cong\prod_{\gamma\in\Ga_n,n\in\IN} M_N\left(\K\wox C_0(O_{\gamma, n, \gamma})\wox\Cl(V_n)\right).\end{split}\]
The maximal and the reduced norm induce two norms on each $M_N(\K\wox C_0(O_{\gamma, n, \gamma})\wox\Cl(V_n))$ for any $\gamma\in\Ga_n$ and $n\in\IN$, and the norm on the direct product is given by the supremum of the sequence. Notice that $M_N(\IC)$, $\Cl(V_n)$ are finite dimensional and $C_0(O_{\gamma, n, \gamma})$ is commutative, thus nuclear. As a result, there is a unique norm on
$$M_N\left(\K\wox C_0(O_{\gamma, n, \gamma})\wox\Cl(V_n)\right).$$
Thus, the maximal and the reduced norm coincides on each $\IC[Y_{\gamma,R}]\wox\A(O_{\gamma,n,\gamma})$. This completes the proof.
\end{proof}

Now, we are ready to prove Proposition \ref{twisted K-amenable at infinity for FCE spaces}.

\begin{proof}[Proof of Proposition \ref{twisted K-amenable at infinity for FCE spaces}]
Let $O_{x,n}(r)=\bigcup_{y\in B(x,l_n)}B(t_x(y)(s(y)),r)$. Then $O(r)=\{O_{x,n}(r)\}_{x\in X_n,n\in\IN}$ forms a coherent system. For any $r<r'$, there exists a canonical inclusion
$$\ICu[(X_n,\A(V_n))_{n\in\IN}]_{O(r)}\to\ICu[(X_n,\A(V_n))_{n\in\IN}]_{O(r')}.$$
By definition, we that
$$\ICu[(X_n,\A(V_n))_{n\in\IN}]\cong\lim_{r\to\infty}\ICu[(X_n,\A(V_n))_{n\in\IN}]_{O(r)}.$$
To show $\pi^{\A}$ in \eqref{twisted quotient map} is a $*$-isomorphism, it suffices to show that
$$\pi^{\A}_{O(r)}:C^*_{u,{\rm max},\infty}((X_n,\A(V_n))_{n\in\IN})_{O(r)}\to\Cau((X_n,\A(V_n))_{n\in\IN})_{O(r)}$$
is an isomorphism for any $r>0$.

For given $r>0$, since $(X_n)_{n\in\IN}$ has uniformly bounded geometry, there exists $N>0$ such that $\#B(x,r)<N$ for all $x\in X_n$, $n\in\IN$. It follows from \cite[Lemma 6.7]{Yu2000} that there exists an integer $J_r> 0$ independent of $n$ such that we have the following decompositions for all $n\in\IN$:\begin{itemize}
\item[$\bullet$]$X_n=\bigcup_{j=1}^{J_r}\Gamma_n^{j}$, where $\Gamma_n^{j}\subseteq X_n$;
\item[$\bullet$]$\Gamma_n^{(j)}\cap\Gamma_n^{(j')}=\emptyset$ whenever $j\ne j'$;
\item[$\bullet$]for any $x\in X_n$ and any distinct $\gamma,\gamma'\in\Gamma_n^{(j)}\cap B(x,l_n)$.
$$d(t_x(\gamma)(s(\gamma)),t_x(\gamma')(s(\gamma')))>2r$$
in $V_n$.\end{itemize}
For each $j\in\{1,2,\cdots,J_r\}$, let
$$O_{x,n}^{(j)}(r)=\bigcup_{\gamma\in\Gamma_n^{(j)}\cap B(x,l_n)}B_{\IR_+\times V_n}(t_{x}(\gamma)(s(\gamma)),r)$$
for all $x\in X_n$, $n\in\IN$. Then the system
$$O^{(j)}(r)=\left(O_{x,n}^{(j)}(r)\right)_{x\in X_n,n\in\IN}$$
is coherent for each $j\in\{1,2,\cdots,J_r\}$ and 
$$O(r)=\bigcup_{j=1}^{J_r}O^{(j)}(r).$$
Let $\Gamma^{(j)}=\left(\Gamma_n^{(j)}\right)_{n\in\IN}$. It is clear that the system $O^{(j)}(r)$ and
$$O^{(j)}(r)\cap O^{(j')}(r)$$
are $\left(\Gamma^{(j)},r\right)$-separate for any $j,j'\in\{1,2,\cdots,J_r\}$. By Lemma \ref{separated case: twisted K-amenable at infinity}, we have that
\begin{equation}\label{piAO isomorphism}\pi^{\A}_{O^{(j)}(r)}:C^*_{u,{\rm max},\infty}((X_n,\A(V_n))_{n\in\IN})_{O^{(j)}(r)}\to\Cau((X_n,\A(V_n))_{n\in\IN})_{O^{(j)}(r)}\end{equation}
and
\begin{equation}\label{piAOcap isomorphism}\pi^{\A}_{O^{(j)}(r)\cap O^{(j')}(r)}:C^*_{u,{\rm max},\infty}((X_n,\A(V_n))_{n\in\IN})_{O^{(j)}(r)\cap O^{(j')}(r)}\to\Cau((X_n,\A(V_n))_{n\in\IN})_{O^{(j)}(r)\cap O^{(j')}(r)}\end{equation}
are isomorphisms for any $j,j'\in\{1,\cdots,J_r\}$. By using a similar argument in \cite[Lemma 6.3]{Yu2000}, we have the following pushout diagram
\[\begin{tikzcd}
 \ICu[(X_n,\A(V_n))_{n\in\IN}]_{O^{(j)}(r)\cap O^{(j')}(r)}\arrow[r] \arrow[d] & \ICu[(X_n,\A(V_n))_{n\in\IN}]_{O^{(j')}(r)}\arrow[d]   \\
\ICu[(X_n,\A(V_n))_{n\in\IN}]_{O^{(j)}(r)}\arrow[r] & \ICu[(X_n,\A(V_n))_{n\in\IN}]_{O^{(j)}(r)\cup O^{(j')}(r)}\end{tikzcd}\]
In a pushout diagram, the $C^*$-norm on a pushout is uniquely determined by the norms on the other three $C^*$-algebras in the diagram.
Combining this fact with \eqref{piAO isomorphism} and \eqref{piAOcap isomorphism}. we have that $\pi^{\A}_{O^{(j)}(r)\cup O^{(j')}(r)}$ is an isomorphism. By the inductive argument, $\pi^{\A}_{O(r)}$ is an isomorphism. This finishes the proof.
\end{proof}

\subsection{Geometric Bott map and Dirac map}

In \cite{CWY2013}, X.~Chen, Q.~Wang and G.~Yu introduced a Bott map and a Dirac map for maximal Roe algebra at infinity, i.e.,
$$\beta_t:\S\wox C^*_{u,{\rm max},\infty}((X_n)_{n\in\IN})\to C^*_{u,{\rm max},\infty}((X_n,\A(V_n))_{n\in\IN})$$
and
$$\alpha_t:C^*_{u,{\rm max},\infty}((X_n,\A(V_n))_{n\in\IN})\to C^*_{u,{\rm max},\infty}((X_n,\K(L^2_n))_{n\in\IN}).$$
These two maps can still be defined for the reduced case. We shall recall some details in this section.

\subsubsection{The Bott map}
We shall first recall the Bott map for a finite dimensional Euclodean space. For any $n\in\IN$ and $v\in V_n$, the \emph{Clifford operator} is defined to be
$$C_{V_n,v}:V_n\to V_n^0\subseteq\Cl(V_n^0)$$
the unbounded function $w\mapsto w- v\in V_n^0\subseteq\Cl(V_n^0)$ for all $v\in V_n$. For each $n\in\IN$ and $x\in X_{n}$, the inclusion of the $0$-dimensional affine subspace $W_0(x)=\{t_{x}(x)(s(x))\}$ into $V_n$ induces a $*$-homomorphism
$$\beta(x):\S\cong\A\left(W_0(x)\right)\to\A(V_n)$$
by the formula
$$\left(\beta(x)\right)(g)=g\left(X\wox 1+1\wox C_{V_n,t_{x}(x)(s(x))}\right).$$

\begin{Def}[The Bott map]
For each $t\in [1,\infty)$, define a map
$$\beta_t:\S\wox\ICu[(X_n)_{n\in\IN}]\to\ICu[(X_n,\A(V_n))_{n\in\IN}]$$
by the formula
$$\beta_t(g\wox T)=[((\beta_t(g\wox T))^{(0)},\cdots,(\beta_t(g\wox T))^{(n)},\cdots)]$$
for all $g\in\S$, $T=[(T^{(0)},\cdots,T^{(n)},\cdots)]\in\ICu[(X_n)_{n\in\IN}]$, where
$$(\beta_t(g\wox T))^{(n)}(x,y)=\left(\beta(x)\right)(g_t)\wox T^{(n)}(x,y)$$
for $x,y\in X_{n}$, $n\in\IN$, and $g_t(r)=g(\frac rt)$ for all $r\in\IR$.
\end{Def}

Let $A,B$ be two $C^*$-algebras. Denoted by
$$\Q(B)=\frac{C_b(\IR_+,B)}{C_0(\IR_+,B)}.$$
Recall that a family of map $(\beta_t:A\to B)_{t\in\IR_+}$ is an asymptotic morphism if $(\beta_t)_{t\in\IR_+}$ induces a $*$-homomorphism
$$\beta_t:A\to\Q(B).$$
An asymptotic morphism also induces a homomorphism on $K$-theory, the reader is referred to \cite{GHT2000} for a detailed introduction to asymptotic morphisms.

\begin{Lem}\label{Bott for FCE}
The maps $(\beta_t)_{t\in\IR}$ extend respectively to asymptotic morphisms
$$\beta:\S\wox\Cau((X_n)_{n\in\IN})\leadsto\Cau((X_n,\A(V_n))_{n\in\IN}),$$
$$\beta_{max}:\S\wox C^*_{u,{\rm max},\infty}((X_n)_{n\in\IN})\leadsto C^*_{u,{\rm max},\infty}((X_n,\A(V_n))_{n\in\IN}).$$
\end{Lem}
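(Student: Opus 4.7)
The plan is to verify the asymptotic $*$-homomorphism axioms for $\{\beta_t\}_{t \geq 1}$ on the algebraic generators $g \otimes T \in \S \wox \ICu[(X_n)_{n\in\IN}]$ with all estimates uniform in the fibre index $n$, and then extend by continuity to both the maximal and reduced completions.

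First I would verify that for each fixed $t \geq 1$, the sequence $\beta_t(g \otimes T)$ lies in $\ICu[(X_n,\A(V_n))_{n\in\IN}]$ by checking conditions (1)--(5) of Definition \ref{twisted Roe algebra for FCE}. Conditions (2), (4), (5) follow from the facts that $\beta(x)(g_t)$ lies in the image of $\beta_{W_0(x),V_n}$ (and hence of $\beta_{W_k(x),V_n}$ for any $k$), is supported in a ball around $t_x(x)(s(x))$ whose radius depends only on $t$ and the essential support of $g$, and has derivatives in the direction of any unit vector controlled by $t^{-1}\|g'\|_\infty$; the remaining conditions are inherited directly from $T$.

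The heart of the argument is asymptotic multiplicativity. For generators $g \otimes T$ and $h \otimes S$ with propagation of $T,S$ at most $R$, unpacking the twisted product gives
$$\bigl(\beta_t((gh)\otimes(TS)) - \beta_t(g\otimes T)\beta_t(h\otimes S)\bigr)^{(n)}(x,y) = \sum_{z \in X_n} \Bigl[\beta(x)\bigl((gh)_t\bigr) - \beta(x)(g_t)\cdot (t_{xz})_*\bigl(\beta(z)(h_t)\bigr)\Bigr] \wox T^{(n)}(x,z)(t_{xz})_*\bigl(S^{(n)}(z,y)\bigr).$$
Since $t_{xz}$ is an affine isometry with $t_{xz}(t_z(z)(s(z))) = t_x(z)(s(z))$, the factor $(t_{xz})_*(\beta(z)(h_t))$ equals the Bott element at $t_x(z)(s(z)) \in V_n$. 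The fibred coarse embedding gives $\|t_x(x)(s(x)) - t_x(z)(s(z))\| \leq \rho_+(R)$ whenever $T^{(n)}(x,z) \ne 0$, so we are comparing two Bott elements on $V_n$ based at uniformly close points. The standard asymptotic-multiplicativity estimate for Bott maps on a finite-dimensional Euclidean space (as in \cite{HKT1998}) then yields
$$\bigl\|\beta(x)\bigl((gh)_t\bigr) - \beta(x)(g_t)\cdot (t_{xz})_*\bigl(\beta(z)(h_t)\bigr)\bigr\| \longrightarrow 0 \qquad (t \to \infty),$$
uniformly in $n$ and in $x,z$ with $d(x,z) \leq R$. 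Combined with the uniform row/column bound from Definition \ref{twisted Roe algebra for FCE}(3), summation over $z$ gives asymptotic multiplicativity of $\beta_t$. Asymptotic $*$-preservation is handled by the same strategy using $t_{xy} \circ t_{yx} = \mathrm{id}$, and norm continuity in $t$ follows from continuity of $g \mapsto g_t$ in $\S$.

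Finally, I extend to the completions. For the maximal case the extension is automatic by universality: composing any $*$-representation of $\ICu[(X_n,\A(V_n))_{n\in\IN}]$ with $\beta_t$ gives a $*$-representation of $\S \wox \ICu[(X_n)_{n\in\IN}]$ of norm at most $\|g\|_\infty \|T\|_{\max}$. For the reduced case I would represent both algebras on the Hilbert modules $E_n$ introduced in Section \ref{subsec: Twisted algebras at infinity}: left multiplication by $\beta(x)(g_t)$ on $E_n$ has norm at most $\|g\|_\infty$, so $\beta_t$ is bounded with norm at most $\|g\|_\infty\cdot\sup_n\|T^{(n)}\|$, and a direct check shows that untwisted ghost sequences are sent to twisted ghost sequences, so $\beta_t$ descends past both ghost ideals with the required bound. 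The main obstacle throughout is maintaining all estimates uniformly in $n$; this uniformity is precisely what the fibred coarse embedding (encoded in the single control function $\rho_+$) delivers.
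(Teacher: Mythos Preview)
Your proposal is correct and follows essentially the same approach as the paper. The paper carries out the key estimate by working directly with the resolvent generators $g=(x\pm i)^{-1}$ to get the explicit bound $\tfrac{1}{t}\rho_+(R)$, and for the reduced norm bound it factors $\beta_t(g\otimes T^{(n)})$ as a left multiplier $N_{g_t}$ times $1\otimes T^{(n)}$ on $E_n$; these are the same ingredients you invoke, just made concrete.
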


\begin{proof}
The maximal case is essentially proved in \cite[Lemma 7.3]{CWY2013}. The reduced case is proved in \cite[Lemma 7.3]{GLWZ2022}, one can just replace the Banach space $B$ in \cite{GLWZ2022} with a Hilbert space. We provide a straightforward proof here for the case when $X$ is sparse (coarse disjoint union of finite spaces)

It is not hard to show that $\beta$ is asymptotically linear. We shall only show that
$$\|\beta_t(ST)-\beta_t(S)\beta_t(T)\|\to0\quad\mbox{as}\quad t\to\infty.$$
For fixed $R>0$, there exists $N_R>0$ such that $l_n>R$ for all $n>N_R$. Let $x,y\in X_{n}$ with $d(x,y)<R$ and $n\geq N_R$, then both $x$ and $y$ are in the regions of trivializations of $t_x$ and $t_y$. For each fixed $g\in\S$, it suffices to prove
$$\|\beta(x)(g_t)-(t_{xy})_*(\beta(y)(g_t))\|$$
tends to $0$ uniformly for all $x,y\in X_{n}$ with $n>N_R$ and $d(x,y)\leq R$, where $t_{xy}=t_{x}\circ t_{y}^{-1}$ is as in Definition \ref{FCE}. Thus it is direct to check that
$$(t_{xy})_*\left(C_{V_n,t_{y}(y)(s(y))}\right)=C_{V_n,t_{x}(y)(s(y))}.$$

For the generator $g=\frac{1}{x\pm i}$ of $\S$, by using a standard argument, one has that
\begin{equation}\begin{split}
&\|\beta(x)(g_t)-(t_{xy})_*\beta(y)(g_t)\|\\
=&\left\|g_t\left(X\wox 1+1\wox C_{V_n,t_{x}(x)(s(x))}\right)-(t_{xy})_*\left(g_t\left(X\wox 1+1\wox C_{V_n,t_{y}(y)(s(y))}\right)\right)\right\|\\
=&\left\|g_t\left(X\wox 1+1\wox C_{V_n,t_{x}(x)(s(x))}\right)-g_t\left(X\wox 1+1\wox C_{V_n,t_{x}(y)(s(y))}\right)\right\|\\
\leq&\frac 1t\|t_{x}(x)(s(x))-t_{x}(y)(s(y))\|\leq\frac 1t\rho_+(R).
\end{split}\end{equation}
It follows from an approximation argument as in \cite[Lemma 7.3]{Yu2000}, $(\beta_t)_{t\in\IR_+}$ is a well-defined $*$-homomorphism from $\S\wox\ICu[(X_n)_{n\in\IN}]$ to $\Q(\Cau((X_n,\A(V_n))_{n\in\IN}))$.

For each $n\in\IN$, as we can view $\S$ as $\A(W_0(x))$ for $x\in X_{n}$, we define $\beta^z_{V_n,x}:\S\to\A(V_n)$ is induced by the inclusion $i^z_x:\{t_{x}(z)(s(z))\}\to V_n$ which maps $t_{x}(z)(s(z))$ to itself. For every $g\in\S$, we define $\beta_{V_n,x}:\S\to C(X_n,\A(V_n)\wox\K)$ by
$$(\beta_{V_n,x}(g))(z)=\left(\beta_{V_n,x}^z(g)\right).$$
With the notation above, we define a bounded module homomorphism $N_g: E_n\to E_n$ for each $n\in\IN$ by
$$(N_g(f))(x)=\beta_{V_n}(g)\cdot f(x)$$
for all $f\in E_n$, $n\in\IN$, where $E_n$ is the Hilbert module defined as in \eqref{Hilbert module E_n}.  For any $T=[(T^{(n)})_{n\in\IN}]\in \ICu[(X_n)_{n\in\IN}]$, we have that
$$\beta_t(g\wox T^{(n)})=N_{g_t}\cdot(1\wox T^{(n)})$$
for all $g\in\S$ and sufficiently large $n\in\IN$, where 
$$\left((1\wox T^{(n)})f\right)(x)=\sum_{y\in X_{n}}(1\wox T^{(n)}(x,y))f(y)$$
for all $f\in E_n$.

From the argument above, it follows that
$$\|\beta_t(g\wox T^{(n)})\|\leq\|g\|\cdot\|T^{(n)}\|$$
for all $g\in\S$ and $T^{(n)}\in\IC[X_n]$. Moreover, the image of $I_G$ under the Bott map is clearly in $I_G(\A)$. Hence $\beta$ extends to a $C^*$-asymptotic morphism from $\S\otimes_{\rm max}\Cau((X_n)_{n\in\IN})$ to $\Cau((X_n,\A(V_n))_{n\in\IN})$. Since $\S$ is nuclear, we conclude that the lemma holds as desire.
\end{proof}

\subsubsection{The Dirac map}

Now, let us define the Dirac map. For any $n\in\IN$, set $E_n=\text{linear-span}\{V_n\}\subseteq\H$. Define
$$L^2_n=L^2\left(E_n,\Cl(E_n)\right)$$
to be the graded infinite dimensional complex Hilbert space of square-integrable $\Cl(E_n)$-valued functions on $E_n$, where $E_n$ is endowed with the Lebesgue measure. The grading on $L^2_n$ is inherited from the grading of the Clifford algebra. Choose an orthonormal basis $\{e_1,\cdots, e_n\}$ for $E_n$, and let $x_1,\cdots, x_n$ be the corresponding coordinates. The \emph{Dirac operator}, denoted by $D_{V_n}$, is defined by
$$(D_{V_n}u)(x)=\sum_{i=1}^n(-1)^{deg(u)}\frac{\partial u}{\partial x_i}(x)\cdot e_i,$$
for any $u\in\S(E_n)\subseteq L^2_n$, where $\S(E_n)$ is the subspace of Schwartz functions in $L^2_n$. Denote by $\K(L^2_n)$ the graded $C^*$-algebra of all compact operators on $L^2_n$.

Let $t:V_n\to V_n$ be an affine isometric bijection. Then it induces a unitary operator on $L^2_n$, which further induces a $*$-isomorphism
$$t_*:\K(L^2_n)\to\K(L^2_n)$$
by using a conjugation with the unitary. For any $n\in\IN$ and $x,z\in X_n$ with $C=B(x,l_n)\cap B(z,l_n)\neq \emptyset$, we have a bijective isometry
$$t_{xz}:W_C(z)\to W_C(x).$$
We can extend this isometry to a unitary on $E_n$ by choosing a unitary operator
$$U_{xz}: W_C(z)^{\bot}\to W_C(x)^{\bot},$$
where $W_C(z)^{\bot}=E_n\ominus W_C(z)$. Then $U_{xz}\oplus t_{xz}:E_n\to E_n$ forms an affine isometry from $E_n$ to $E_n$. We still denote it by $(t_{xz})_*:\K(L^2_n)\to \K(L^2_n)$ the $C^*$-homomorphism induced by $U_{xz}\oplus t_{xz}$. For each pair $x,z$ as above, we choose a unitary operator $U_{xz}$ such that these unitary operators satisfy the cocycle condition: $U_{xz}=U_{xy}\circ U_{yz}$ for any $y\in B(x,l_n)\cap B(z,l_n)$.

\begin{Def}
Define $\ICu[(X_n,\S\K(L^2_n))_{n\in\IN}]$ to be the set of equivalent classes $T=[(T^{(n)})_{n\in\IN}]$ of sequences $(T^{(n)})_{n\in\IN}$ such that\begin{itemize}
\item[(1)] $T^{(n)}$ is a bounded function from $X_n\times X_n$ to $\S\wox\K(L^2_n)\wox\K$ for each $n\in\IN$ such that the sequence $(T^{(n)})_{n\in\IN}$ is uniformly bounded;
\item[(2)] there exists $R>0$ such that $T^{(n)}(x,y)=0$ whenever $x,y\in X_n$ satisfy that $d(x,y)>R$ and $n\in\IN$;
\end{itemize}
The equivalence relation $\sim$ on these sequences is defined by
$$\left(T^{(n)}\right)_{n\in\IN}\sim \left(S^{(n)}\right)_{n\in\IN}$$
if and only if
$$\lim_{n\to\infty}\sup_{x,y\in Z_{d,n}}\|T^{(n)}(x,y)-S^{(n)}(x,y)\|_{\S\K(L^2_n)}=0.$$
\end{Def}

The algebraic structure of $\ICu[(X_n,\S\K(L^2_n))_{n\in\IN}]$ is defined similar with Definition \ref{twisted Roe algebra for FCE} (also see \cite[Definition 7.6]{CWY2013}). The product structure for $\ICu[(X_n,\S\K(L^2_n))_{n\in\IN}]$ is defined as follows. For any two elements $T=[(T^{(0)},\cdots,T^{(n)},\cdots)]$ and $S=[(S^{(0)},\cdots,S^{(n)},\cdots)]$ in $\ICu[(X_n,\S\K(L^2_n))_{n\in\IN}]$, the product is defined as
$$TS=[((TS)^{(0)},\cdots,(TS)^{(n)},\cdots)]$$
where there exists a sufficiently large $N\in\IN$ which depends on the propagation of $T$, such that $(TS)^{(n)}=0$ for $n < N$ and
$$(TS)^{(n)}(x,y)=\sum_{z\in X_{n}}\left(T^{(n)}(x,z)\right)\cdot\left((t_{xz})_*(S^{(n)}(z,y))\right),$$
where $(t_{xz})_*:\K(L^2_n)\to\K(L^2_n)$ is defined as above.

The $*$-structure for $\ICu[(X_n,\S\K(L^2_n))_{n\in\IN}]$ is defined by
$$[(T^{(0)},\cdots,T^{(n)},\cdots)]^*=[((T^*)^{(0)},\cdots,(T^*)^{(n)},\cdots)]$$
where
$$(T^*)^{(n)}(x,y)=(t_{xy})_*\left(\left(T^{(n)}(y,x)\right)^*\right)$$
for all but finitely many $n$, and $0$ otherwise. Then $\ICu[\PdGA]$ is made into a $*$-algebra with the additional usual matrix operations. 

The reduced norm on $\ICu[(X_n,\S\K(L^2_n))_{n\in\IN}]$ is defined similar with Definition \ref{twisted Roe algebra for FCE}. Define
$$H_n=\bigoplus_{x\in X_n}\ell^2\left(B(x,\frac{l_n}2)\right)\wox L^2_n\ox \S\ox\K.$$
For an element $[(T^{(n)})_{n\in\IN}]\in\ICu[(X_n,\S\K(L^2_n))_{n\in\IN}]$, we fix a representation sequence $(T^{(n)})_{n\in\IN}$. The action of $T^{(n)}$ on $H_n$ is defined by
$$T_n(\delta_x\ox\delta_y\ox (v\ox s\ox k))=\sum_{z\in X_n}\delta_z\ox\delta_y\ox (t_{yz})_*(T_n(z,x)) (v\ox s\ox k),$$
for any $x,y\in X_n$, $v\in L^2_n$ and $s\ox k\in\S\wox\K$.
Follwoing a similar argument in Remark \ref{reduced norm is well-defined}, one can check that this action is compatible with the algebraic structure of $\ICu[(X_n,\S\K(L^2_n))_{n\in\IN}]$.

We can similarly define the ghost ideal $I_G(\S\K)$ in this situation.
Define $B^*((X_n,\S\K(L^2_n))_{n\in\IN})$ to be the completion of $\ICu[(X_n,\S\A(L^2_n))_{n\in\IN}]$ with respect to the norm
$$\|T\|=\limsup_{n\to\infty}\|T^{(n)}\|_{H_n},$$
where the norm of each operator $T^{(n)}$ is given by the $*$-representation on $H_n$ as above. An element in $[(T^{(n)})_{n\in\IN}]\in B^*((X_n,\S\K(L^2_n))_{n\in\IN})$ is a ghost if for any $R>0$, 
$$\lim_{n\to\infty}\sup_{x,y\in X_n}\|\chi_{B(x,R)}T^{(n)}\chi_{B(y,R)}\|=0.$$
Denoted by $I_G(\S\K)$ to be the ideal of all ghost elements in $B^*((X_n,\S\K(L^2_n))_{n\in\IN})$.

\begin{Def}
Define $C^*_{u,\infty}((X_n,\S\K(L^2_n))_{n\in\IN})$ to be the completion of $\ICu[(X_n,\S\K(L^2_n))_{n\in\IN}]$ with respect to the norm
$$\|T\|=\inf\left\{\sup_{n\in\IN}\|T^{(n)}+S^{(n)}\|_{H_n}\mid (S^{(n)})_{n\in\IN}\in I_G(\S\K)\right\}.$$

Define $C^*_{u,{\rm max},\infty}((X_n,\S\K(L^2_n))_{n\in\IN})$ to be the completion of $\ICu[(X_n,\S\K(L^2_n))_{n\in\IN}]$ with respect to the norm
$$\|T\|_{\rm max}=\sup\left\{\|\phi(T)\|\mid \phi: \ICu[(X_n,\S\K(L^2_n))_{n\in\IN}]\to\B(\H_{\phi})\text{ is a $*$-representation}\right\}.$$
\end{Def}

For an affine space $V\subseteq E_n$, we denote by $V^{\bot}=E_n\ominus V$ to be the orthogonal complement of $V$ in $E_n$. We define the Bott-Dirac operator on the linear space $V^{\bot}$ to be
$$B_{V^{\bot}}=C_{V^{\bot}}+D_{V^{\bot}},$$
where $C_{V^{\bot}}$ is the Clifford operator associated with $0$ and $D_{V^{\bot}}$ is the Dirac operator on $V^{\bot}$. It is proved in \cite[Appendix D.3]{HIT2020} that the eigenvalue of $B^2_{V^{\bot}}$ is all non-negative even numbers with finite-dimensional eigenspaces. The eigenspace of $0$ is the one-dimensional linear subspace spanned by the Gauss function on $V^{\bot}$.

\begin{Def}[The Dirac map]
For each $t\in [1, \infty)$, define a map
$$\alpha_t: \ICu[(X_n,\A(V_n))_{n\in\IN}]\to\ICu[(X_n,\S\K(L^2_n))_{n\in\IN}]$$
by the formula
$$\alpha_t(T)=[(\alpha_t(T))^{(0)},\cdots,(\alpha_t(T))^{(n)},\cdots]$$
for $T =[(T^{(0)},\cdots,T^{(n)},\cdots)]\in\ICu[(X_n,\A(V_n))_{n\in\IN}]$, with
$$(\alpha_t(T))^{(n)}(x,y)=(\theta_t^k(x))\left(T_1^{(n)}(x,y)\right)$$
for any $x,y\in X_n$, $n\in\IN$, where\begin{itemize}
\item the number $k>0$ (independent of $n$) and $T_1(x,y)$ is given as in condition (2) and condition (6) of Definition \ref{twisted Roe algebra for FCE}.
\item the map
$$\theta_t^k(x):\A(W_k(x))\wox\K\to\K(L^2_n)\wox\K$$
is defined by the formula
$$\left(\theta_t^k(x)\right)(g\ox h\ox k)=g_t(B_{W_k(x)^{\bot}}\wox 1+1\wox D_{W_k(x)})(1\wox M_{h_t})\wox k$$
for all $g\in\S$, $h\in\C(W_k(x))$ and $k\in\K$.
\end{itemize}\end{Def}

The map $\theta_t^k$ is defined as an analogue of \cite[Definition 2.8]{HKT1998}. By using Rellich Lemma and the ellipticity of the Dirac operator, one can check the map $\alpha_t$ is well-defined.

\begin{Lem}\label{Dirac for FCE}
The maps $(\alpha_t)_{t\geq 1}$ extend respectively to asymptotic morphisms
$$\alpha:\Cau((X_n,\A(V_n))_{n\in\IN})\leadsto\Cau((X_n,\S\K(L^2_n))_{n\in\IN}),$$
$$\alpha_{\rm max}:C^*_{u,{\rm max},\infty}((X_n,\A(V_n))_{n\in\IN})\leadsto C^*_{u,{\rm max},\infty}((X_n,\S\K(L^2_n))_{n\in\IN}).$$
\end{Lem}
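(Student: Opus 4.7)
The plan is to prove both the reduced and maximal versions simultaneously by first verifying that $(\alpha_t)_{t\geq 1}$ defines an asymptotic morphism at the algebraic level $\ICu[(X_n,\A(V_n))_{n\in\IN}] \to \Q(\ICu[(X_n,\S\K(L^2_n))_{n\in\IN}])$, and then separately treating continuous extension to each completion. The algebraic argument parallels \cite[Lemma 7.11]{CWY2013}, while the reduced version additionally requires compatibility with the Hilbert module representations $E_n$ and $H_n$ used to define the reduced norms.

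The core technical step is asymptotic multiplicativity. For $T, S \in \ICu[(X_n,\A(V_n))_{n\in\IN}]$ with propagation at most $R$, I would expand $(TS)^{(n)}(x,y) = \sum_{z} T^{(n)}(x,z)\cdot(t_{xz})_*(S^{(n)}(z,y))$ and compare $\alpha_t(TS)^{(n)}(x,y)$ with the corresponding sum $\sum_z \alpha_t(T)^{(n)}(x,z)\cdot(t_{xz})_*(\alpha_t(S)^{(n)}(z,y))$. Choosing $n$ large enough that $l_n > R$, both $x$ and $z$ lie in each other's trivialization balls and the affine isometry $t_{xz}$ takes $W_k(z)$ into $V_n$. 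The Bott-Dirac operators $B_{W_k(x)^\perp} + D_{W_k(x)}$ and the pushforward $(t_{xz})_*(B_{W_k(z)^\perp} + D_{W_k(z)})$ therefore differ only by translation by $t_x(x)(s(x)) - t_x(z)(s(z))$, whose norm is at most $\rho_+(R)$ by the fibred coarse embedding condition. The resolvent estimate already used for the Bott map in Lemma \ref{Bott for FCE} then yields $\|\theta_t^k(x)(a) - (t_{xz})_*\theta_t^k(z)((t_{zx})_*(a))\| = O(1/t)$ on generators, uniformly in $x, z, n$, giving asymptotic multiplicativity after the usual approximation argument on $\S$. The $*$-property is checked similarly using the cocycle condition on $U_{xz}$.

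For the maximal extension, the universality of the maximal norm applied to the family of $*$-homomorphisms into $\Q(C^*_{u,{\rm max},\infty}((X_n,\S\K(L^2_n))_{n\in\IN}))$ supplies the continuous extension automatically. For the reduced extension, I would realize each $\theta_t^k(x)$ as a genuine $*$-homomorphism $\A(W_k(x)) \wox \K \to \S\wox\K(L^2_n) \wox \K$ via functional calculus, package these into a bounded map between the ambient Hilbert modules $E_n \to H_n$ (using the tensor factor $L^2_n$ in $H_n$), and thereby obtain $\|\alpha_t(T)^{(n)}\|_{H_n} \leq \|T^{(n)}\|_{E_n}$. One then verifies that $\alpha_t$ carries $I_G(\A)$ into $I_G(\S\K)$: finite propagation is preserved exactly, and since $\theta_t^k(x)$ is norm-nonincreasing, a ghost estimate $\|\chi_{B(x,R)}T^{(n)}\chi_{B(y,R)}\| \to 0$ transfers to $\alpha_t(T)$. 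These two facts together produce a well-defined asymptotic morphism to the reduced completion.

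The main obstacle I anticipate is uniformity in Step one: controlling the commutation defect between $\theta_t^k(x)$ and $(t_{xz})_*$ simultaneously across all $n$ and all pairs $x, z$ with $d(x,z) \leq R$, while the target Hilbert spaces $L^2_n$ and the affine subspaces $W_k(x) \subseteq V_n$ vary with $n$. This is handled by the fibred coarse embedding hypothesis: for each fixed propagation $R$, the condition $l_n \to \infty$ ensures that all relevant $x, z$ eventually lie in a common trivialization ball, so that the cocycle relation $t_{xz} = t_{xy}\circ t_{yz}$ and the explicit affine isometric structure of $t_{xz}$ can be exploited to bound the Bott-Dirac resolvent difference by $\rho_+(R)/t$ uniformly in $n$. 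Once this uniform bound is established, all other steps are formal consequences of the functional calculus framework of \cite{HKT1998}.
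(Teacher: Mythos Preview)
Your proposal is essentially correct and follows the same route as the paper for the algebraic step and the maximal extension, but you take a different path for the reduced extension.

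For the reduced case, the paper does \emph{not} build explicit Hilbert-module maps $E_n\to H_n$ or verify ghost-ideal preservation directly. Instead it invokes Proposition~\ref{twisted K-amenable at infinity for FCE spaces}, which says that the canonical quotient $\pi^{\A}:C^*_{u,{\rm max},\infty}((X_n,\A(V_n))_{n\in\IN})\to\Cau((X_n,\A(V_n))_{n\in\IN})$ is already a $*$-isomorphism. Thus the reduced domain \emph{is} the maximal completion, and the universal property immediately supplies the extension to $\Q(\Cau((X_n,\S\K(L^2_n))_{n\in\IN}))$ once the algebraic asymptotic morphism is in hand. Your direct approach---bounding $\|\alpha_t(T)^{(n)}\|_{H_n}$ via an interior tensor construction over the change-of-coefficient map $\theta_t^k(x)$ and then checking that ghosts go to ghosts---is a legitimate alternative and has the virtue of being self-contained, but it is more laborious and the step ``package these into a bounded map $E_n\to H_n$'' hides real work: the modules live over different coefficient algebras and the $\theta_t^k(x)$ depend on the base point $x$, so you would need to pass through $E_n\otimes_{\theta_t}(\S\wox\K(L^2_n)\wox\K)$ and identify it inside $H_n$ compatibly with the twisted $(t_{xz})_*$-structure. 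The paper's route avoids all of this by front-loading the hard analysis into Proposition~\ref{twisted K-amenable at infinity for FCE spaces}.

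A minor note: the relevant reference in \cite{CWY2013} is Lemma~7.10, not 7.11.
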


\begin{proof}
The maximal case is proved in \cite[Lemma 7.10]{CWY2013}. Following from the same argument, one can check the maps $(\alpha_t)_{t\geq 1}$ define a $*$-homomorphism
$$\alpha:\ICu[(X_n,\A(V_n))_{n\in\IN}]\to\Q(\Cau((X_n,\S\K(L^2_n))_{n\in\IN})).$$
By using Proposition \ref{twisted K-amenable at infinity for FCE spaces}, this map extends to a $C^*$-homomorphism by using the universal property.
\end{proof}

\subsubsection{A geometric Dirac-dual-Dirac construction}

Combining Lemma \ref{Bott for FCE} with Lemma \ref{Dirac for FCE}, we have the following commutative diagram on the level of $K$-theory.

\begin{equation}\label{Dirac-dual-Dirac for FCE}\begin{tikzcd}
K_{*+1}(C^*_{u,{\rm max},\infty}((X_n)_{n\in\IN}))\arrow[r,"\pi_*"] \arrow[d,"(\beta_{\rm max})_*"'] & K_{*+1}(\Cau((X_n)_{n\in\IN}))\arrow[d,"\beta_*"]   \\
K_*(C^*_{u,{\rm max},\infty}((X_n,\A(V_n))_{n\in\IN}))\arrow[r,"\pi_*^{\A}"] \arrow[d,"(\alpha_{\rm max})_*"'] & K_*(\Cau((X_n,\A(V_n))_{n\in\IN}))\arrow[d,"\alpha_*"]   \\ 
K_*(C^*_{u,{\rm max},\infty}((X_n,\S\K(L^2_n))_{n\in\IN}))\arrow[r,"\pi_*"] & K_*(\Cau((X_n,\S\K(L^2_n))_{n\in\IN}))
\end{tikzcd}\end{equation}

Following the argument in \cite[Theorem 7.11]{CWY2013} for the reduced case, we obtain the following:

\begin{Lem}\label{composition of bott and dirac}
The composition $\alpha_*\circ\beta_*$ and $(\alpha_{\rm max})_*\circ(\beta_{\rm max})_*$ are the identity maps.\qed
\end{Lem}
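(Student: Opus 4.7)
The plan is to compute the composition $\alpha_t \circ \beta_t$ explicitly on algebraic generators, identify it up to a homotopy of asymptotic morphisms with a canonical inclusion coming from the Gauss function, and then observe that this inclusion realizes the identity on $K$-theory. This follows the strategy of \cite[Theorem 7.11]{CWY2013}, with both the reduced and the maximal cases handled simultaneously by working at the algebraic level before completion.

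First, a direct unravelling of the definitions. For $g \in \S$ and $T = [(T^{(n)})_{n\in\IN}] \in \ICu[(X_n)_{n\in\IN}]$, one uses $k = 0$ in Definition \ref{twisted Roe algebra for FCE}, so that $W_0(x) = \{t_x(x)(s(x))\}$ and $\A(W_0(x)) \cong \S$. The preimage of $\beta_t(g \ox T)^{(n)}(x,y)$ under $\beta_{W_0(x),V_n}$ is $g_t \ox T^{(n)}(x,y)$, and applying $\theta_t^0(x)$ (noting $D_{W_0(x)} = 0$ and $\C(W_0(x)) = \IC$) yields
\begin{equation*}
(\alpha_t \circ \beta_t)(g \ox T)^{(n)}(x,y) \;=\; g_t(B_{V_n}) \wox T^{(n)}(x,y),
\end{equation*}
where $B_{V_n} = C_{V_n} + D_{V_n}$ is the Bott-Dirac operator on $V_n$ based at $t_x(x)(s(x))$.

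Next, I invoke the spectral analysis of $B_{V_n}$: the operator $B_{V_n}^2$ is a harmonic oscillator with discrete spectrum $\{0,2,4,\dots\}$ and finite-dimensional eigenspaces, the zero eigenspace being one-dimensional and spanned by the Gauss function $\xi_x$ centered at $t_x(x)(s(x))$ (cf.\ \cite[Appendix D.3]{HIT2020}). Let $P_x$ denote the rank-one projection onto $\IC\xi_x$. I build a homotopy of asymptotic morphisms from $\alpha \circ \beta$ to the map
\begin{equation*}
\iota(g \ox T)^{(n)}(x,y) \;=\; g \ox P_x \wox T^{(n)}(x,y),
\end{equation*}
by a spectral rescaling interpolating between $g_t(B_{V_n})$ and $g(0) P_x$; the spectral gap (nonzero eigenvalues $\geq 2$) controls the contributions from $(\IC\xi_x)^{\perp}$ as $t \to \infty$, and the equivariance of Gauss functions under affine isometries gives compatibility with the transitions $t_{xz}$ required by Remark \ref{composition}. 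The target $\iota$ induces the identity on $K$-theory, since each $P_x$ realizes a Morita equivalence $\K(L^2_n) \sim \IC$, and under the suspension isomorphism $K_*(\S \wox A) \cong K_{*+1}(A)$ the map $\iota_*$ becomes the identity on $K_{*+1}(\Cau((X_n)_{n\in\IN}))$.

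The same construction goes through verbatim for the maximal case, since the homotopy is built at the algebraic level $\ICu[(X_n,\A(V_n))_{n\in\IN}]$ before taking any completion, so the universal property of the maximal norm extends every step. The main technical obstacle is verifying that the interpolating family lies uniformly in the twisted Roe algebras at infinity, i.e., that the support, propagation, and derivative conditions (1)--(5) of Definition \ref{twisted Roe algebra for FCE} hold with constants uniform in $(x,y,n)$ along the homotopy. This uniformity is precisely where the fibred coarse embedding hypothesis is used: the divergence $l_n \to \infty$ of the trivialization radii ensures that for any fixed propagation $R$ and spectral parameter, all $x,y$ with $d(x,y) \leq R$ lie in common trivialization neighbourhoods for sufficiently large $n$, so the interpolated operators respect the ``at infinity'' structure and define bona fide elements of $\Cau((X_n,\S\K(L^2_n))_{n\in\IN})$ (respectively its maximal version).
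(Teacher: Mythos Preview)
Your proposal is correct and follows precisely the approach the paper defers to: the paper gives no proof of its own, merely citing \cite[Theorem 7.11]{CWY2013} and closing with \qed, and your sketch recapitulates that argument (compute $\alpha_t\circ\beta_t$ on generators, use the spectral gap of the Bott--Dirac operator to homotope to the Gauss-projection inclusion, then Morita). A minor notational slip: after composing you get $g_{t^2}(B_{E_n})$ rather than $g_t(B_{V_n})$, but this is harmless since the asymptotic limit as $t\to\infty$ is unaffected.
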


Now, we are ready to prove Theorem \ref{K-amenable at infinity for FCE spaces}.

\begin{proof}[Proof of Theorem \ref{K-amenable at infinity for FCE spaces}]
By Lemma \ref{composition of bott and dirac} and the diagram \eqref{Dirac-dual-Dirac for FCE}, the lemma follows from diagram-chasing.
\end{proof}

As a consequence of Theorem \ref{K-amenable at infinity for FCE spaces} and \cite[Theorem 1.1]{CWY2013}, we have the following result.

\begin{Cor}\label{CBC at infinity for FCE}
Let $(X_n)_{n\in\IN}$ be a sequence of finite metric spaces whose coarse disjoint union admits a fibred coarse embedding into Hilbert space, then the coarse assembly map at infinity for $(X_n)_{n\in\IN}$ is an isomorphism, i.e.,
$$\mu_{\infty}:\lim_{d\to\infty}K_*(\CauL((P_d(X_n))_{n\in\IN}))\to\lim_{d\to\infty}K_*(\Cau((P_d(X_n))_{n\in\IN}))$$
is an isomorphism.
\end{Cor}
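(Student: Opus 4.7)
The plan is to deduce the reduced statement from its maximal analogue by combining Theorem \ref{K-amenable at infinity for FCE spaces} with the main result of \cite{CWY2013}. The key observation is that although the localization algebra $\CauL((P_d(X_n))_{n\in\IN})$ is defined in Definition \ref{localization algebra} using the reduced Roe algebra at infinity, Remark \ref{Remark on localization algebra at infinity} ensures that its $K$-theory is canonically isomorphic to that of its maximal counterpart $C^*_{u,{\rm max},L,\infty}((P_d(X_n))_{n\in\IN})$. Consequently, both the reduced assembly map at infinity $\mu_\infty$ and its maximal version $\mu_\infty^{\rm max}$ have the same domain at the level of $K$-theory.

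First, I would invoke \cite[Theorem 1.1]{CWY2013}, which establishes that when the coarse disjoint union $X = \bigsqcup_n X_n$ admits a fibred coarse embedding into Hilbert space, the \emph{maximal} assembly map at infinity
\[
\mu_\infty^{\rm max}:\lim_{d\to\infty}K_*(C^*_{u,{\rm max},L,\infty}((P_d(X_n))_{n\in\IN}))\to\lim_{d\to\infty}K_*(C^*_{u,{\rm max},\infty}((P_d(X_n))_{n\in\IN}))
\]
is an isomorphism. Next, Theorem \ref{K-amenable at infinity for FCE spaces} provides, for each fixed $d$, an isomorphism on $K$-theory induced by the quotient map $\pi: C^*_{u,{\rm max},\infty}((P_d(X_n)))\to\Cau((P_d(X_n)))$. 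Since $K$-theory commutes with inductive limits, passing to the limit as $d\to\infty$ yields an isomorphism
\[
\pi_*:\lim_{d\to\infty}K_*(C^*_{u,{\rm max},\infty}((P_d(X_n))_{n\in\IN}))\xrightarrow{\cong}\lim_{d\to\infty}K_*(\Cau((P_d(X_n))_{n\in\IN})).
\]

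Finally, I would assemble these pieces using naturality of the evaluation homomorphism. The quotient maps at the level of Roe algebras (max to reduced) and at the level of localization algebras fit into a commutative square with the evaluation maps $e(f)=f(0)$, yielding a commutative diagram on $K$-theory whose bottom row, after the identification of Remark \ref{Remark on localization algebra at infinity}, factors $\mu_\infty$ as $\pi_*\circ \mu_\infty^{\rm max}$. Since both factors are isomorphisms, so is $\mu_\infty$, which completes the proof.

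I do not anticipate any serious obstacle in this argument, since all the heavy lifting has already been carried out in Theorem \ref{K-amenable at infinity for FCE spaces} and in \cite{CWY2013}; the only point requiring mild care is justifying that the diagram of evaluation maps commutes with the max-to-reduced quotients, and that the identification of $K$-theory of localization algebras in Remark \ref{Remark on localization algebra at infinity} is natural with respect to these maps, but these are formal consequences of the constructions.
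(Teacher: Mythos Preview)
Your proposal is correct and follows essentially the same route as the paper: both arguments combine \cite[Theorem 1.1]{CWY2013} (the maximal assembly map at infinity is an isomorphism) with Theorem \ref{K-amenable at infinity for FCE spaces} (max and reduced Roe algebras at infinity have the same $K$-theory) and the identification from Remark \ref{Remark on localization algebra at infinity} (max and reduced localization algebras at infinity agree on $K$-theory). The paper additionally invokes coarse invariance of the Roe algebra at infinity to replace $P_d(X_n)$ by $X_n$ on the target side, but this is a cosmetic simplification; your inductive-limit formulation is equivalent.
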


\begin{proof}
It is direct to check that the Roe algebra at infinity is coarsely invariant as \cite[Theorem 5.1.15]{HIT2020}. Thus the left-hand side is equal to $K_*(\Cau((X_n)_{n\in\IN}))$, which is isomorphic to $K_*(C^*_{u,{\rm max},\infty}((X_n)_{n\in\IN}))$ by Theorem \ref{K-amenable at infinity for FCE spaces}. On the other side, the $K$-theory of both maximal and reduced localization algebra are isomorphic to
$$\frac{\prod_{n\in\IN}K_*(P_d(X_n))}{\bigoplus_{n\in\IN}K_*(P_d(X_n))},$$
since the localization algebras are local, see \cite[Proposition 4.3]{CWY2013} and \cite[Lemma 5.1]{GLWZ2022}. Then this corollary follows directly from \cite[Theorem 1.1]{CWY2013}.
\end{proof}

\subsection{A remark on boundary groupoids and a-T-menability}\label{subsec: boundary groupoid}

Besides the method with the aid of twisted algebras, there is another approach to study fibred coarse embeddings using the \emph{boundary coarse groupoid}. Let $X$ be a metric space with bounded geometry. For any $R>0$, define
$$\Delta_{R}=\{(x,y)\in X\times X\mid d(x,y)\leq R\}.$$
The \emph{coarse groupoid} of $X$, introduced in \cite{STY2002}, is defined to be
$$G(X)=\bigcup_{R\geq 0}\overline{\Delta_R}\subseteq\beta(X\times X),$$
where $\beta(X\times X)$ means the Stone-\v{C}ech compactification of $X\times X$. Actually, the coarse groupoid is principal, one can also take closure in $\beta(X)\times\beta(X)$. The unit space of $G(X)$ is $\beta(X)$ and the source (resp. range) map is extended from projection $p:X\times X\to X$ to the second (resp. first) coordinate. More details can be found in \cite{STY2002} and \cite[Chapter 10]{Roe2003}.

Note that $X$ is an open subset of $\beta(X)$, and it is also an invariant subset in $G(X)$. Denoted by $\partial_\beta X=\beta X\backslash X$ the Stone-\v{C}ech Corona. The \emph{boundary groupoid}, denoted by $G_{\infty}(X)$, is defined to be the restriction of $G(X)$ on the closed invariant subset $\partial_{\beta}X\subseteq\beta(X)$.

In the rest of this section, we assume that $X=\bigsqcup_{n\in\IN}X_n$ is a sparse space. The \emph{uniform Roe algebra at infinity}, denoted by $\IC_{u,\infty,\IC}[(X_n)_{n\in\IN}]$, is defined similarly to the Roe algebra at infinity but replacing the $\K(H_0)$-valued functions by $\IC$ valued function. More precisely, an element in$\IC_{u,\infty,\IC}[(X_n)_{n\in\IN}]$ is the equivalent class of a sequence
$$(T^{(n)}:X_n\times X_n\to\IC)_{n\in\IN}$$
such that $(T^{n})_{n\in\IN}$ is uniformly bounded and the propagation of the sequence $(T^{n})_{n\in\IN}$ is finite. Define $C^*_{u,\infty,\IC}((X_n)_{n\in\IN})$ to be the completion of $\IC_{u,\infty,\IC}[(X_n)_{n\in\IN}]$ under the norm defined by
$$\|T\|=\inf\left\{\sup_{n\in\IN}\|T^{(n)}+S^{(n)}\|\mid (S^{(n)})_{n\in\IN}\in I_{u,G}\right\},$$
for any $T=[(T^{n})_{n\in\IN}]$, where $I_{u,G}$ is the ghost ideal in $\prod_{n\in\IN}^uC^*_u(X_n)$.

\begin{Thm}\label{boundary groupoid algebra and Roe algebra at infinity}
The groupoid $C^*$-algebra $C^*_r(G_{\infty}(X))$ is $*$-isomorphic to $C^*_{u,\infty,\IC}((X_n)_{n\in\IN})$.
\end{Thm}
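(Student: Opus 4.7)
The plan is to realize both sides of the isomorphism as quotients of the same algebra by the same ideal. By definition, the right-hand side $C^*_{u,\infty,\IC}((X_n)_{n\in\IN})$ is the quotient $\prod^u_{n\in\IN} C^*_u(X_n) / I_{u,G}$ by the ghost ideal, so it suffices to exhibit the left-hand side as this same quotient.

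First, I would invoke the Skandalis--Tu--Yu identification $C^*_r(G(X)) \cong C^*_u(X)$. For the sparse space $X = \bigsqcup_n X_n$, finite-propagation operators on $\ell^2(X) = \bigoplus_n \ell^2(X_n)$ are eventually block-diagonal in this decomposition (since $d(X_i,X_j)\to\infty$), so $C^*_u(X)$ agrees with $\prod^u_n C^*_u(X_n)$ modulo finite-rank cross-terms that lie in the compact ideal, and hence in $I_{u,G}$. Combined with the open-closed decomposition $\beta X = X \sqcup \partial_\beta X$ of the unit space of $G(X)$, the restriction map on convolution algebras $C_c(G(X)) \to C_c(G_\infty(X))$ is surjective (by Tietze extension from the closed subgroupoid) and is a $*$-homomorphism with respect to the convolution structures. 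Extending by continuity, I obtain a surjective $*$-homomorphism $\Phi : \prod^u_n C^*_u(X_n) \to C^*_r(G_\infty(X))$.

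Second---the heart of the matter---I would show $\ker(\Phi) = I_{u,G}$. The key input is that every regular representation of $C^*_r(G_\infty(X))$ at a boundary point $\omega \in \partial_\beta X$ is realized, via the corona structure of $\beta X$, as an ultralimit along a free ultrafilter on $X$ of the fiber regular representations of $C^*_u(X)$ at points $x \in X$. A ghost operator $(T_n) \in I_{u,G}$, whose matrix coefficients vanish uniformly at infinity, is killed by every such ultralimit representation, giving $I_{u,G} \subseteq \ker(\Phi)$. Conversely, if $T$ is not a ghost then there exist $R, \varepsilon > 0$ and sequences $x_k, y_k \in X$ with $\|\chi_{B(x_k,R)} T \chi_{B(y_k,R)}\| \geq \varepsilon$ for all $k$; choosing a non-principal ultrafilter along $(x_k)$ produces a boundary point at which the corresponding regular representation of $\Phi(T)$ has norm at least $\varepsilon$, so $\Phi(T) \neq 0$. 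This yields $\ker(\Phi) \subseteq I_{u,G}$.

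The main obstacle is the reverse inclusion $\ker(\Phi) \subseteq I_{u,G}$: one must exhibit a boundary regular representation that genuinely detects a non-ghost element. This is delicate because the naive groupoid exact sequence $0 \to C^*_r(G(X)|_X) \to C^*_r(G(X)) \to C^*_r(G_\infty(X)) \to 0$ only captures the compact ideal $c_0\text{-}\bigoplus_n \B(\ell^2(X_n))$ as kernel, which is strictly smaller than $I_{u,G}$ whenever $X$ lacks property A; the extra ghost elements must be accounted for by a direct ultrafilter construction rather than by formal invocation of the exact sequence. Once the kernel identification is in place, the first isomorphism theorem applied to $\Phi$ yields $C^*_r(G_\infty(X)) \cong \prod^u_n C^*_u(X_n)/I_{u,G} = C^*_{u,\infty,\IC}((X_n)_{n\in\IN})$, completing the proof.
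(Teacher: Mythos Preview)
Your overall strategy coincides with the paper's: both identify each side as the quotient of $C^*_u(X)\cong\prod^u_n C^*_u(X_n)$ by the ghost ideal and conclude via the first isomorphism theorem. The difference lies in how the kernel identification is handled. The paper first builds explicit mutually inverse $*$-algebra maps $\Lambda$ and $\Psi$ between $C_c(G_\infty(X))$ and $\IC_{u,\infty,\IC}[(X_n)_n]$, extends $\Psi$ contractively to the completions via the composition $C^*_u(X)\cong C^*_r(G(X))\to C^*_r(G_\infty(X))$, and then for injectivity simply \emph{cites} the exact sequence $0\to I_{u,G}\to C^*_r(G(X))\to C^*_r(G_\infty(X))\to 0$ from \cite[Theorem~34]{FS2014} or \cite[Equation~(5.3)]{WZ2023}. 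You propose instead to prove this exact sequence directly via boundary regular representations and ultrafilters. The outline is correct and is essentially how the cited results are obtained, but two points are more delicate than your sketch indicates: (i) the source-fiber representations $\lambda_x$ of $G(X)$ at finite points $x\in X$ are all unitarily equivalent to the standard representation on $\ell^2(X)$, so ``ultralimit of fiber regular representations at points $x\in X$'' must be read as a limit of finite \emph{windows} $\chi_{B(x,R)}(\cdot)\chi_{B(y,R)}$ rather than of the full representations; (ii) deducing $\lambda_\omega(T)=0$ for an arbitrary ghost $T$ (not merely a finite-propagation one) from vanishing matrix coefficients requires this windowed description together with a limit-interchange that is not automatic. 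With these points made precise your argument goes through; the paper's route is more economical by outsourcing this step, while yours would be more self-contained.
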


\begin{proof}
For any $f\in C_c(G_{\infty}(X))$, there exists $R>0$ such that $\supp(f)\subseteq \partial\Delta_R=\overline{\Delta_R}\backslash \Delta_R$. Since we have the exact short sequence in the sense of $*$-algebras (see \cite[Appendix C]{SpaWil2017})
$$0\to C_c(X\times X)\to C_c(G(X))\to C_c(G_{\infty}(X))\to 0,$$
we can extend $f$ to a continuous function $\wt f\in C_c(G(X))$. Without loss of generality, assume that the support of $\wt f$ is also contained in $\overline{\Delta_R}$. Define $T_f^{(n)}$ to be the restriction of $\wt f$ to $(X_n\times X_n)\cap\Delta_R$. Then the equivalent class of the sequence $(T_f^{(n)})_{n\in\IN}$ defines an element in $\IC_{u,\infty,\IC}[(X_n)_{n\in\IN}]$. Notice that the equivalent class of $(T_f^{(n)})_{n\in\IN}$ does not depend on the choice of $\wt f$. Indeed, if $g\in C_c(G(X))$ is another choice of the extension of $f$, then $g-\wt f\in C_c(X\times X)$ by using the short exact sequence above. There exists a sufficiently large $N>0$ such that $\wt f(x,y)=g(x,y)$ for all $(x,y)\in X_n\times X_n$ with $n>N$. We denote this map by
$$\Lambda: C_c(G_{\infty}(X))\to \IC_{u,\infty,\IC}[(X_n)_{n\in\IN}],\qquad f\mapsto [(T_f^{(n)})_{n\in\IN}].$$

On the other hand, for any $T=[T^{(n)}]\in\IC_{u,\infty,\IC}[(X_n)_{n\in\IN}]$, fix $(T^{(n)})_{n\in\IN}$ as a representation element with propagation equal to $R$. Then $f:\Delta_R\to\IC$ defined by
$$f_T(x,y)=\left\{\begin{aligned} &T^{(n)}(x,y),&&(x,y)\in X_n\times X_n\text{ for some }n;\\&0,&&\text{otherwise.}\end{aligned} \right.$$
Then $f_T$ extends to a continuous function on $\overline{\Delta_R}$, which restricts to an element $f_{T,\infty}$ in $C_c(G_{\infty}(X))$. For any different representation element $(S^{(n)})_{n\in\IN}$, since
$$\limsup_{n\to\infty}|S^{(n)}(x,y)-T^{(n)}(x,y)|\to 0,$$
thus we have that $f_{T,\infty}=f_{S,\infty}$ on $G_{\infty}(X)$. Define
$$\Psi:\IC_{u,\infty,\IC}[(X_n)_{n\in\IN}]\to C_c(G_{\infty}(X)),\quad\text{by}\quad [(T^{(n)})_{n\in\IN}]\mapsto f_{T,\infty}.$$

It is not hard to check that $\Lambda$ and $\Psi$ are $*$-homomorphisms by using a similar proof with \cite[Proposition 10.28]{Roe2003} and inverse maps to each other, thus are $*$-isomorphisms. It suffices to show their norm are the same. By \cite[Proposition 10.29]{Roe2003}, the \emph{uniform Roe algebra} of $C^*_u(X)$ is isomorphic to $C^*_r(G(X))$. For any $T=(T^{(n)})_{n\in\IN}\in\IC_{u,\infty,\IC}[(X_n)_{n\in\IN}]$, the representation sequence $(T^{(n)})_{n\in\IN}$ defines an element in the uniform Roe algebra $C^*_u(X)$, where $X=\bigsqcup_{n\in\IN}X_n$. Thus for any $N>0$, the map
$$C^*_u(X)\to C^*_r(G(X))\to C^*_r(G_{\infty}(X))$$
defined by
$$T_{>N}=(T^{(n)})_{n>N}\mapsto f_{T_{>N}}\mapsto f_{T_{>N},\infty}$$
is contractive. Notice that for any $N,M>0$, the image $f_{T_{>N},\infty}$ are equal $f_{T,\infty}$. Passing to the limit, one can descend this map to
$$\Psi:C^*_{u,\infty,\IC}((X_n)_{n\in\IN})\to C^*_r(G_{\infty}(X)),$$
which coincides with $\Psi$ defined as above.

Assume that $a\in C^*_{u,\infty,\IC}((X_n)_{n\in\IN})$ satisfies that $\Psi(a)=0$. Since the canonical quotient introduced in Section \ref{subsec: CNinfty}
$$\Phi:C^*_u(X)\to C^*_{u,\infty,\IC}((X_n)_{n\in\IN})$$
is surjective, there exists a sequence $(T^{(n)}\in C^*_u(X_n))_{n\in\IN}$ such that the direct sum of $T^{(n)}$ are in $C^*_u(X)$ and
$$\Phi\left(\bigoplus_{n\in\IN}T^{(n)}\right)=a.$$
By  \cite[Theorem 34]{FS2014} or \cite[Equation (5.3)]{WZ2023}, we have the following short exact sequence
$$0\to I_{u,G}\to C^*_r(G(X))\to C^*_r(G_{\infty}(X))\to 0,$$
where $I_{u,G}$ is the ghost ideal of $C^*_u(X)$. This means that $\sup_{x,y\in X_n}|T^{(n)}(x,y)|$ tends to $0$ as $n$ tends to infinity. By definition of the equivalent relationship in Definition \ref{Roe algebra at infinity}, the equivalent class defined by $(T^{(n)})_{n\in\IN}$ is equal to $0\in C^*_{u,\infty,\IC}((X_n)_{n\in\IN})$. This means that $a=0$, i.e., $\Psi$ is injective. This completes the proof.
\end{proof}

Let $A_{\infty}=\frac{\ell^{\infty}(X,\K)}{C_0(X,\K)}$. It is clear that $A_{\infty}$ is a $G_{\infty}(X)$-$C^*$-algebra. Combining \cite[Lemma 4.4]{STY2002}, one can show the following corollary.

\begin{Cor}
The groupoid crossed product $A_{\infty}\rtimes_r G_{\infty}(X)$ is isomorphic to $C^*_{u,\infty}((X_n)_{n\in\IN})$.\qed
\end{Cor}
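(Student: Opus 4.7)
The approach adapts the proof of Theorem \ref{boundary groupoid algebra and Roe algebra at infinity} from scalar to $\K$-valued coefficients, substituting the full Roe algebra for the uniform Roe algebra. The central input is \cite[Lemma 4.4]{STY2002}, which provides a canonical $*$-isomorphism
$$\Theta:\ell^{\infty}(X,\K)\rtimes_r G(X)\xrightarrow{\cong}C^*(X)$$
between the groupoid crossed product of the $G(X)$-algebra $\ell^{\infty}(X,\K)$ (on which $G(X)$ acts by translation of matrix indices) and the Roe algebra of $X$.

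The plan proceeds in three steps. First, exhibit the short exact sequence of $G(X)$-algebras
$$0\to C_0(X,\K)\to\ell^{\infty}(X,\K)\to A_{\infty}\to 0,$$
noting that $A_{\infty}$ is supported on the closed invariant subset $\partial_{\beta}X$ and therefore is naturally a $G_{\infty}(X)$-algebra in the sense that its crossed product with $G(X)$ factors through $G_{\infty}(X)$. Second, pass to the corresponding sequence of reduced crossed products to obtain
$$0\to C_0(X,\K)\rtimes_r G(X)\to\ell^{\infty}(X,\K)\rtimes_r G(X)\to A_{\infty}\rtimes_r G_{\infty}(X)\to 0,$$
which is exact as the non-uniform analogue of \cite[Theorem 34]{FS2014} or \cite[(5.3)]{WZ2023}. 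Third, identify the ideal $C_0(X,\K)\rtimes_r G(X)$ with the ghost ideal $I_G\subset C^*(X)$ under $\Theta$; combined with the already-noted identification $C^*_{u,\infty}((X_n)_{n\in\IN})\cong C^*(X)/I_G$, this yields
$$A_{\infty}\rtimes_r G_{\infty}(X)\cong\frac{\ell^{\infty}(X,\K)\rtimes_r G(X)}{C_0(X,\K)\rtimes_r G(X)}\cong\frac{C^*(X)}{I_G}\cong C^*_{u,\infty}((X_n)_{n\in\IN}).$$

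The principal technical obstacle is the third step: verifying that $\Theta(C_0(X,\K)\rtimes_r G(X))=I_G$. For a finite-propagation operator $T\in C^*(X)$, the condition that the matrix coefficients $T(x,y)$ vanish as $(x,y)\to\infty$ in $X\times X$ is directly equivalent to $T\in\Theta(C_0(X,\K)\rtimes_r G(X))$, and it also implies the ghost condition. The converse containment requires showing that every ghost operator in $C^*(X)$ can be approximated in norm by finite-propagation ghosts; this is the same density issue resolved in the uniform setting of \cite{FS2014, WZ2023}, and the argument carries over essentially unchanged thanks to the bounded-geometry hypothesis on the sparse space $X=\bigsqcup_{n\in\IN}X_n$. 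Once this identification is in place, the isomorphism in the corollary is formal, with $\Phi$ from Theorem \ref{boundary groupoid algebra and Roe algebra at infinity} playing the role of the induced map on the $\K$-amplified quotients.
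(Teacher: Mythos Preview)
Your overall route---combine \cite[Lemma 4.4]{STY2002} with the restriction sequence and the Finn-Sell/Wang--Zhang identification of the kernel---is exactly what the paper has in mind. But step~3 contains a genuine error.

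The crossed product $C_0(X,\K)\rtimes_r G(X)$ is \emph{not} the ghost ideal $I_G$. Since $C_0(X,\K)$ is supported on the open invariant set $X\subset\beta X$, and $G(X)|_X$ is the pair groupoid $X\times X$, one has $C_0(X,\K)\rtimes_r G(X)\cong\K(\ell^2(X)\ox H_0)$; under $\Theta$ this is precisely the compact operators $\K\subset C^*(X)$. For spaces without property~A---which is the whole point here, e.g.\ box spaces that are expanders but admit an FCE---the ghost ideal $I_G$ strictly contains $\K$. Your argument for the reverse inclusion would in fact prove $I_G=\K$: you claim every ghost is a norm limit of finite-propagation ghosts, but as the paper itself recalls (just before Definition~\ref{Roe algebra at infinity}), a finite-propagation ghost is automatically compact, so your claim forces $I_G\subseteq\overline{\K}=\K$. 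That is equivalent to property~A and is false in the cases of interest.

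The fix is to drop the middle step entirely. The non-uniform analogue of \cite[Theorem 34]{FS2014} / \cite[(5.3)]{WZ2023} does not assert exactness of the sequence you wrote; it asserts directly that the kernel of
\[
\ell^{\infty}(X,\K)\rtimes_r G(X)\longrightarrow A_{\infty}\rtimes_r G_{\infty}(X)
\]
is $I_G$ (and the gap between $I_G$ and $\K\cong C_0(X,\K)\rtimes_r G(X)$ measures exactly the failure of the reduced crossed product to be exact here). With this in hand the corollary is immediate:
\[
A_{\infty}\rtimes_r G_{\infty}(X)\;\cong\;\frac{\ell^{\infty}(X,\K)\rtimes_r G(X)}{I_G}\;\cong\;\frac{C^*(X)}{I_G}\;\cong\;C^*_{u,\infty}((X_n)_{n\in\IN}),
\]
the first isomorphism by the kernel identification, the second by \cite[Lemma 4.4]{STY2002}, and the third by the definition of the reduced norm on the Roe algebra at infinity.
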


Theorem \ref{boundary groupoid algebra and Roe algebra at infinity} and the corollary above provide an approach to study the Roe algebras at infinity using groupoid. If $G_{\infty}(X)$ is $K$-amenable, i.e, the canonical quotient map from the maximal groupoid $C^*$-algebra to the reduced groupoid $C^*$-algebra is a $KK$-equivalence, then one obtains that the homomorphism 
$$\lambda_*:K_*(A_{\infty}\rtimes_{\rm max}G_{\infty}(X))\to K_*(A_{\infty}\rtimes_r G_{\infty}(X))$$
induced by the canonical quotient map on $K$-theory an isomorphism. Combining with Theorem \ref{boundary groupoid algebra and Roe algebra at infinity}, the $K$-amenability of $G_{\infty}(X)$ implies Theorem \ref{K-amenable at infinity for FCE spaces}.

\begin{Rem}\label{why a-T=menable does not work}
In \cite{FS2014}, Finn-Sell showed that if $X$ admits a fibred coarse embedding into a Hilbert space, then the boundary groupoid $G_{\infty}(X)$ is a-T-menable. By Tu's result in \cite{Tu1999}, an a-T-menable groupoid is $K$-amenable. One might initially think that these results could give a more straightforward proof for Theorem \ref{K-amenable at infinity for FCE spaces}. However, this is not the case. The reason lies in the limitation of Tu's result, which can only be applied to groupoids with unit spaces that are second countable. Unfortunately, this condition does not hold for $G_{\infty}(X)$, as discussed in \cite[Remark 1.5]{JR2013}.
\end{Rem}

\section{The twisted algebras at infinity}\label{sec: twisted CBC at infty}

Let $(1\to N_n\to G_n\to Q_n\to 1)_{n\in\IN}$ be a sequence of extensions of finite groups with uniformly finite generating subsets which admits a ``FCE-by-FCE" structure. In the rest of this paper, we shall prove that the homomorphism 
$$e_*:K_*(C^*_{u,L,\infty}((P_d(G_n))_{n\in\IN}))\to K_*(C^*_{u,\infty}((P_d(G_n))_{n\in\IN}))$$
induced by evaluation map on $K$-theory is injective. 

In this section, we shall define the twisted Roe algebras and the twisted localization algebras at infinity for $P_d(G)$ using the fibred coarse embeddings of the quotient groups, and show that the evaluation map between them induces an isomorphism at the level of $K$-theory. The constructions and the idea of proofs introduced in this section are similar to those in Section \ref{subsec: Twisted algebras at infinity}. 

Let $(1\to N_n\to G_n\to Q_n\to 1)_{n\in\IN}$ be a sequence of extensions of finite groups with uniformly finite generating subsets. Let $G=\bigsqcup_{n\in\IN}G_n$ be the coarse disjoint union of $\{G_n\}_{n\in\IN}$, similarly for $N$ and $Q$.  Assume that $N$ and $Q$ admit fibred coarse embeddings into the Hilbert space $H$.

For each $d\geq 0$ and $n\in\IN$, let $P_d(Q_n)$ be the Rips complex of $Q_n$ at scale $d$ endowed with the spherical metric. For each $q\in Q_n$, denote by $Star(q)$ the open star of $q$ in the barycentric subdivision of $P_d(Q_n)$. Take a countable dense subset $Z^Q_{d,n}\subset P_d(Q_n)$ for each $d\geq 0$ in such a way that
$$Z^Q_{d,n}\subset\bigsqcup_{q\in Q_n}Star(q)\quad\text{and}\quad Z^Q_{d,n}\subset Z^Q_{d',n},\mbox{ when }d<d'.$$

For any $x\in Z^Q_{d,n}$, there exists a unique $q_x\in Q_n$ such that $x\in Star(q_x)$. In the following of this paper, we shall always assume that $q_x\in Q_n$ is the unique point such that $x\in Star(q_x)$. We denote
$$H_x=H_{q_x},\quad s(x)=s(q_x)$$
for all $x\in Z^Q_{d,n}$ and let
$$t_x(z)=t_{q_x}(q_z)$$
for all $x,z\in Z_{d,n}$, $n\in\IN$ with $q_z\in B_{Q_n}(q_x,l_n)$. We can view $P_d(Q)$ as a sparse space for each $d>0$. As $P_d(Q)$ is coarse equivalent to $Q$, there exists a sequence of non-negative numbers $0\leq \tilde l^d_0\leq \tilde l^d_1\leq\cdots\leq \tilde l^d_n\leq\cdots$ with $\lim_{n\to\infty}\tilde l^d_n = \infty$ associated with $\{l_n\}_{n\in\IN}$ for $Q_n$ and the coarse equivalence such that the field of Hilbert spaces and the trivialization above give a fibred coarse embedding of $(Z_{d,n}^Q)_{n\in\IN}$ into a Hilbert space $H$.

For each $n\in\IN$, define $V_n$ to be the finite dimensional affine subspace of $H$ spanned by $t_q (q')(s(q'))$ for all $q'\in B(q,l_n)$, $q\in Q_n$, i.e., 
$$V_n=\mbox{affine-span}\{t_q(q')(s(q'))\mid q'\in B(q,l_n),q\in Q_n\}.$$
For each $x\in Z^Q_{d,n}$, $k\geq 0$, define
$$W_k(x)=\mbox{affine-span}\{t_x(z)(s(z))\mid z\in Z_{d,n}\cap B_{P_d(Q_n)}(x,k)\}\subseteq V_n.$$
Note that for each $k\geq 0$, there exists $N\in\IN$ such that $W_k(x)$ is well defined for $x\in Z_{d,n}$ with $n\geq N$, and is an affine subspace of $V_n$. Similar with Remark \ref{composition}, the isometry $t_{xy}: V_n\to V_n$ induces a $*$-homomorphism
$$(t_{xy})_*:\A(V_n)\to\A(V_n).$$

Let $\pi:(G_n)\to (Q_n)$ be the quotient map. It induces $\pi:P_d(G_n)\to P_d(Q_n)$ by
$$\pi\left(\sum_{i=0}^{k}c_ig_i\right)=\sum_{i=0}^{k}c_i\pi(g_i)$$
where $c_i\geq 0$ and $\sum_{i=0}^{k}c_i=1$. For each $n\in\IN$, choose a countable dense subset $Z_{d,n}$ of $P_d(G_n)$ such that $\pi(Z_{d,n})=Z^Q_{d,n}$ and $Z_{d,n}\subset Z_{d',n}$ when $d<d'$.

\begin{Def}\label{twist Roe algebra for FCE-by-FCE}
For each $d\geq 0$, define $\ICu[\PdGA]$ to be the set of all equivalence classes $T=[(T^{(0)},\cdots,T^{(n)},\cdots)]$ of sequences $(T^{(0)},\cdots,T^{(n)},\cdots)$ described as follows:\begin{itemize}
\item[(1)]$T^{(n)}$ is a function from $Z_{d,n}\times Z_{d,n}$ to $\A(V_n)\wox\K$ for all $n\in\IN$ such that
$$\sup_{n\in\IN}\sup_{x,y\in Z_{d,n}}\left\|T^{(n)}(x,y)\right\|_{\A(V_n)\wox\K}<\infty;$$
\item[(2)]for any $S>0$, there exists $C>0$ such that for any bounded subset $B\subset P_d(G_n)$ with the diameter $diam(B)<S$, we have that
$$\#\{(x,y)\in B\times B\cap Z_{d,n}\times Z_{d,n}\mid T^{(n)}(x, y)\ne 0\}\leq C;$$
\item[(3)]there exists $L>0$ such that
$$\#\{y\in Z_{d,n}|T^{(n)}(x,y)\ne0\}<L,\qquad\#\{y\in Z_{d,n}\mid T^{(n)}(y,x)\ne 0\}<L$$
for all $x\in Z_{d,n}$, $n\in\IN$;
\item[(4)]there exists $R>0$ such that $T^{(n)}(x,y)=0$ whenever $d(x,y)>R$ for $x,y\in Z_{d,n}$, $n\in\IN$. The least such $R$ is called the propagation of the sequence $(T^{(0)},\cdots,T^{(n)},\cdots)$.
\item[(5)]there exists $r>0$ such that $\supp(T^{(n)}(x,y))\subseteq B_{\IR_+\times V_n}(t_{\pi(x)}(s(\pi(x))),r)$, where
$$B_{\IR_+\times V_n}(t_{\pi(x)}(s(\pi(x))),r):=\{(\tau,v)\in\IR_+\times V_n\mid\tau^2+\|v-t_{\pi(x)}(s(\pi(x)))\|^2<r^2\}$$
for all $x,y\in Z_{d,n}$, $n\in\IN$;
\end{itemize}
The equivalence relation $\sim$ on these sequences is defined by
$$(T^{(0)},\cdots,T^{(n)},\cdots)\sim(S^{(0)},\cdots,S^{(n)},\cdots)$$
if and only if
$$\lim_{n\to\infty}\sup_{x,y\in Z_{d,n}}\|T^{(n)}(x,y)-S^{(n)}(x,y)\|_{\A(V_n)\wox\K}=0.$$
\end{Def}

Comparing with Definition \ref{twisted Roe algebra for FCE}, we add condition (2) and condition (3) in Definition \ref{twist Roe algebra for FCE-by-FCE} to ensure that the operator above is bounded and locally compact. These two conditions are not necessary for Definition \ref{twisted Roe algebra for FCE} because $(X_n)_{n\in\IN}$ has bounded geometry. On the other hand, we remove Condition (2) and (5) in Definition \ref{twisted Roe algebra for FCE} for this version, because these two conditions are used to define the Dirac map. We will not need the Dirac map since we only consider the coarse Novikov conjecture. Moreover, only the fibred coarse embedding for the quotient groups $(Q_n)$ is utilized to define the ``twist" algebras.

The product structure for $\ICu[\PdGA]$ is defined as follows. For any two elements $T=[(T^{(0)},\cdots,T^{(n)},\cdots)]$ and $S=[(S^{(0)},\cdots,S^{(n)},\cdots)]$ in $\ICu[\PdGA]$, their product is defined to be
$$TS=[((TS)^{(0)},\cdots,(TS)^{(n)},\cdots)]$$
where there exists a sufficiently large $N\in\IN$ which depends on the propagation of $T$, such that $(TS)^{(n)}=0$ for $n < N$ and
$$(TS)^{(n)}(x,y)=\sum_{z\in Z_{d,n}}\left(T^{(n)}(x,z)\right)\cdot\left((t_{\pi(x)\pi(z)})_*\left(S^{(n)}(z,y)\right)\right)$$

The $*$-structure for $\ICu[\PdGA]$ is defined by
$$[(T^{(0)},\cdots,T^{(n)},\cdots)]^*=[((T^*)^{(0)},\cdots,(T^*)^{(n)},\cdots)]$$
where
$$(T^*)^{(n)}(x,y)=(t_{\pi(x)\pi(y)})_*((T^{(n)}(y,x))^*)$$
for all but finitely many $n$, and $0$ otherwise. Then $\ICu[\PdGA]$ is made into a $*$-algebra by using the additional usual matrix operations.

The norm on $\ICu[\PdGA]$ can be defined following Section \ref{subsec: Twisted algebras at infinity}. However, in the current context, it is more complicated. Let
$$\A_x\wox\K=\bigoplus_{z\in \pi^{-1}\left(B(\pi(x),\tilde l^d_n/2)\right)}\A(V_n)\wox\K.$$
For any $a_x\in \A_x\wox\K$, we denote by $a_x^z$ its projection on the $z$-th coordinate. We write $a_x=(a_x^z)_z$ for simplicity.
Notice that $\A_x\wox\K$ is an $\A(V_n)\wox\K$-module with the $\A(V_n)\wox\K$-action defined by $a_xa=(a_x^za)_{z}\in\A_x\wox\K$.

Consider $\IE_n$ to be the subset of
$$\left\{\sum_{x\in Z_{d,n}}a_x[x]\,\Big|\,a_x=(a_x^z)\in\A_x\wox\K\mbox{ and }a^z_x\ne0\mbox{ for only finite many $x,z\in Z_{d,n}$}\right\}$$
with elements $\sum_{x\in Z_{d,n}}a_x[x]$ such that there exists $n>0$ which is only determined by $\sum_{x\in Z_{d,n}}a_x[x]$ such that for any $x$, the number of $z$ with $a_x^z\ne 0$ is uniformly bounded by $n$. Equip $\IE_n$ with a pre-Hilbert-module structure over $\A(V_n)\wox\K$ as follow:
$$\left\langle\sum_{x\in Z_{d,n}}a_x[x],\sum_{x\in Z_{d,n}}b_x[x]\right\rangle=\sum_{x\in Z_{d,n}}\sum_{z\in Z_{d,n}}(a^z_x)^*b^z_x;$$
$$\left(\sum_{x\in Z_{d,n}}a_x[x]\right)a=\sum_{x\in Z_{d,n}}a_xa[x]$$
for all $a\in\A(V_n)\wox\K$ and $\sum_{x\in Z_{d,n}}a_x[x]$, $\sum_{x\in Z_{d,n}}b_x[x]\in\IE_n$. The inner product is a finite sum since there are only finite many $x$ with $a_x\ne0$ and $\#\{z\in Z_{d,n}\mid a_x^z\ne0\}$ is uniformly bounded. Denoted by $E_n$ the completion of $\IE_n$ under the norm defined by the pre-Hilbert module structure, i.e.,
$$\left\|\sum_{x\in Z_{d,n}}a_x[x]\right\|^2=\left\|\sum_{x,z\in Z_{d,n}}(a^z_x)^*b^z_x\right\|.$$
For given $T=[(T^{(0)},\cdots,T^{(n)},\cdots)]\in\ICu[\PdGA]$, let $T^{(n)}$ act on $E_n$ by
$$T^{(n)}\left(\sum_{x\in Z_{d,n}}a_x[x]\right)=\sum_{x\in Z_{d,n}}\left(\sum_{y\in Z_{d,n}}((t_{\pi(z)\pi(x)})_*T^{(n)}(x,y))a^z_y\right)[x],$$
where $((t_{\pi(z)\pi(x)})_*T^{(n)}(x,y)a^z_y)\in\A_x$ and the sum is finite for the condition (3) in Definition \ref{twist Roe algebra for FCE-by-FCE}. 

Based on the condition (1) and condition (3) in Definition \ref{twist Roe algebra for FCE-by-FCE}, one can check that $T^{(n)}$ is an adjointable module homomorphism. One can also check that the reduced norm is well-defined following the computation in Remark \ref{reduced norm is well-defined}. Furthermore, in accordance with Definition \ref{norms on twisted Roe algebra at infinity for FCE}, it is possible to define the ghost ideal $I_G((G_n,\A_n)_{n\in\IN})$. For brevity, we will omit the specific definition of it here.

\begin{Def}\label{Ax}
The twisted Roe algebras at infinity $\Cau(\PdGA)$ is defined to be the completion of $\ICu[\PdGA]$ with respect to the norm
$$\|T\|=\inf\left\{\sup_{n\in\IN}\|T^{(n)}+S^{(n)}\|_{E_n}: (S^{(n)})_{n\in\IN}\in I_G((G_n,\A_n)_{n\in\IN}))\right\},$$
where the norm of $T^{(n)}+S^{(n)}$ is given by the $*$-representation on $E_n$ as above.
\end{Def}

Now, we can also define the twisted localization algebras.

\begin{Def}\label{twist localization algebra for FCE-by-FCE}
Let $\ICuL[\PdGA]$ be the set of all bounded, uniformly norm-continuous functions
$$g:\IR_+\to \ICu[\PdGA]$$
such that $g(t)$ is of the form $g(t)=[g^{(0)}(t),\cdots,g^{(n)}(t),\cdots]$ and satisfies the following conditions\begin{itemize}
\item[(1)]there exists a bounded function $R(t):\IR_+\to \IR_+$ with $\lim\limits_{t\to\infty}R(t)=0$ such that
$$(g^{(n)}(t))(x,y)=0 ~\mbox{whenever}~ d(x,y)>R(t)~ \mbox{and}~ n\in\IN;$$
\item[(2)]there exists $r>0$ such that $\supp((g(n)(t))(x,y))\subset B_{\IR_+\times V_n}(t_{\pi(x)}(s(\pi(x))),r)$ for all $t\in\IR_+$, $x,y\in Z_{d,n}$ and $n\in\IN$.
\end{itemize}\end{Def}

\begin{Def}
The twisted localization algebra $\CauL(\PdGA)$ is defined to be the norm completion of $\ICuL[\PdGA]$ under the norm
$$\|g\|_{\infty}=\sup_{t\in\IR_+}\|g(t)\|.$$
\end{Def}

The evaluation homomorphism
$$e:\CauL(\PdGA)\to\Cau(\PdGA)$$
defined by $e(g)=g(0)$ induces a homomorphism at $K$-theory level
$$e_*:\lim_{d\to\infty}K_*(\CauL(\PdGA))\to\lim_{d\to\infty}K_*(\Cau(\PdGA))$$

Our purpose of this section is to prove the following result:

\begin{Thm}\label{twisted CBC for G_n}
Let $(1\to N_n\to G_n\to Q_n\to 1)_{n\in\IN}$ be a sequence of extensions of finite groups
with uniformly finite generating subsets. If the sequence $(N_n)_{n\in\IN}$ with the induced metric from the word metrics of $(G)_{n\in\IN}$ and the sequence $(Q_n)_{n\in\IN}$ with the quotient metrics admit fibred coarse embeddings into Hilbert spaces, then the homomorphism
$$e_*:\lim_{d\to\infty}K_*(\CauL(\PdGA))\to\lim_{d\to\infty}K_*(\Cau(\PdGA))$$
induced by the evaluation map on $K$-theory is an isomorphism.
\end{Thm}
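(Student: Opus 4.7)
The plan is to mimic the cutting-and-pasting strategy from the proof of Proposition \ref{twisted K-amenable at infinity for FCE spaces}, reducing the desired isomorphism on $(G_n)_{n\in\IN}$-algebras to Corollary \ref{CBC at infinity for FCE} applied to the normal subgroups $(N_n)_{n\in\IN}$. For each coherent system $O=(O_{x,n})_{x,n}$ of open subsets of $\IR_+\times V_n$ (defined as in Definition \ref{coherent system for FCE} but indexed through the quotient points $q_{\pi(x)}$), one first introduces ideals $\Cau(\PdGA)_O$ and $\CauL(\PdGA)_O$ consisting of those elements whose twist-supports lie in $O$, together with the restricted evaluation $e_O:\CauL(\PdGA)_O\to\Cau(\PdGA)_O$. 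Writing $O(r)_{x,n}=\bigcup_{q\in B(\pi(x),l_n)}B(t_{\pi(x)}(q)(s(q)),r)$ for the canonical enlarging family, the theorem will follow once we prove that $(e_{O(r)})_*$ is an isomorphism for each $r>0$ and each $d$, and then take inductive limits in $r$ and $d$.

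The main work is in the $(\Gamma,r)$-separated case, where $\Gamma=(\Gamma_n)_{n\in\IN}$ with $\Gamma_n\subset Q_n$. Arguing as in Lemma \ref{ICu is isomorphic to IA}, the separated algebra decomposes as a direct sum indexed by $\gamma\in\Gamma_n$, and the coherence condition on the affine transitions $t_{xx'}$ identifies each $\gamma$-summand canonically with an operator algebra on the preimage $\pi^{-1}(B_{Q_n}(\gamma,r))\subset G_n$ carrying coefficients in $\A(O_{\gamma,n,\gamma})$. Since $\A(O_{\gamma,n,\gamma})\cong\S\wox C_0(O_{\gamma,n,\gamma})\wox\Cl(V_n^0)$ is nuclear with commutative factor supported on a contractible Euclidean open set, Bott periodicity and a tensor product argument identify the separated piece at infinity with the Roe algebra at infinity of a sparse space coarsely equivalent to $N$, and similarly for the localization algebras. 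Since $N$ admits a fibred coarse embedding into Hilbert space by hypothesis, Corollary \ref{CBC at infinity for FCE} applied to $(N_n)_{n\in\IN}$ then yields that $(e_O)_*$ is an isomorphism in the separated case.

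To pass from separated pieces to the general $O(r)$, the uniform bounded geometry of $Q$ together with \cite[Lemma 6.7]{Yu2000} provides a finite partition $Q_n=\bigsqcup_{j=1}^{J_r}\Gamma_n^{(j)}$, uniformly in $n$, such that every $O^{(j)}(r)$ and every pairwise intersection $O^{(j)}(r)\cap O^{(j')}(r)$ is $(\Gamma^{(j)},r)$-separated. The evaluation maps then assemble into morphisms of Mayer-Vietoris pushout squares of $C^*$-algebras exactly as at the end of the proof of Proposition \ref{twisted K-amenable at infinity for FCE spaces}, and an inductive application of the five lemma to the induced morphisms of six-term exact sequences upgrades the isomorphism from the separated pieces to $O(r)$. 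Taking the inductive limits $r\to\infty$ and $d\to\infty$ finishes the proof.

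The principal obstacle is the identification, in the separated case, of each $\gamma$-summand with a genuine tensor product whose $K$-theory is accessible to Corollary \ref{CBC at infinity for FCE}. The subtlety is that $\pi^{-1}(B_{Q_n}(\gamma,r))\subset G_n$ has diameter at most $2r$ as a subset of $Q_n$ but inherits the full geometry of $N_n$ from the word metric on $G_n$; one must therefore verify carefully that $(\Gamma,r)$-separation forces the affine transitions $t_{xy}$ to act trivially on twists localized at a fixed $\gamma$, so that the multiplication, adjoint, and Hilbert-module structures on the $\gamma$-summand all factor through an honest tensor product of a Euclidean $\A$-factor with the untwisted Roe algebra at infinity of the preimage sequence, where the already-established result for $N$ can be applied.
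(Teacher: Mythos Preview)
Your overall strategy matches the paper's: exhaust by the enlarging coherent systems $O(r)$, decompose each $O(r)$ into finitely many $(\Gamma,r)$-separated pieces via \cite[Lemma~6.7]{Yu2000}, reduce each separated piece to the coarse Baum--Connes assembly at infinity for the normal subgroups, and glue by Mayer--Vietoris. Two related points, however, are not handled correctly.

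First, you assert that it suffices to show $(e_{O(r)})_*$ is an isomorphism ``for each $r>0$ and each $d$.'' But Corollary~\ref{CBC at infinity for FCE}, which is your only input for the separated case, is a statement about $\lim_{d\to\infty}$; at a fixed scale $d$ the Rips complexes $P_d(N_n)$ are not contractible and the assembly map at infinity need not be an isomorphism. The paper therefore states the separated case (Theorem~\ref{cutting and pasting}) with the limit $d\to\infty$ built into the conclusion, and then observes that the Mayer--Vietoris gluing commutes with this direct limit because the number $J_r$ of pieces is independent of $d$.

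Second, your identification of each $\gamma$-summand with an operator algebra on the \emph{fixed} preimage $\pi^{-1}(B_{Q_n}(\gamma,r))$ is not correct. An entry $T^{(n)}(x,y)$ lying in the $\gamma$-piece satisfies $\supp(T^{(n)}(x,y))\subset O_{n,\pi(x),\gamma}\subset B(t_{\pi(x)}(\gamma)(s(\gamma)),r)$, and condition~(5) of Definition~\ref{twist Roe algebra for FCE-by-FCE} then forces $d_{Q_n}(\pi(x),\gamma)$ to be bounded by some $S$ that depends on the support radius of $T$, not on $r$. Since this radius is unbounded as $T$ ranges over the algebra, the $\gamma$-summand is an inductive limit over $S\to\infty$ of algebras on the growing neighborhoods $\mathcal N_{P_d(G_n)}(\pi^{-1}(\gamma),S)$; this is exactly the content of the paper's Proposition~\ref{cut}. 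The resulting extra limit in $S$ must then be interchanged with the limit in $d$ before Corollary~\ref{CBC at infinity for FCE} can be invoked, and the paper carries out this interchange explicitly in the proof of Theorem~\ref{cutting and pasting}. Your sketch, by collapsing both the $d$- and $S$-limits into a single fixed radius, misses precisely this step.
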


The strategy to prove Theorem \ref{twisted CBC for G_n} combines the idea of \cite{DWY2023,GLWZ2023} with the cutting and pasting argument in Section \ref{subsec: Twisted algebras at infinity}. 

\begin{Def}
A collection $O=(O_{n,q})_{q\in Q_n, n\in\IN}$ of open subsets of $\IR_+\times V_n$, $n\in\IN$, is said to be a coherent system if for all but finitely many $n\in\IN$, and any non-empty subset $C\subseteq B_{Q_n}(q,l_n)\cap B_{Q_n}(q',l_n)$ with $q,q'\in Q_n$, we have
$$O_{n,q}\cap B_{\IR_+\times V_n}\left(t_{q}(s(q)),r\right)=t_{qq'}\left(O_{n,q'}\cap B_{\IR_+\times V_n}\left(t_{q'}(q)(s(q)),r\right)\right).$$
\end{Def}

\begin{Def}
Let $O=(O_{n,q})_{q\in Q_n,n\in\IN}$ be a coherent system of open subsets of $\IR_+\times V_n$, $n\in\IN$. For each $d\geq0$, define $\ICu[\PdGA]_{O}$ to be the $*$-subalgebra of $\ICu[\PdGA]$ generated by the equivalence classes of those sequences $[(T^{(0)},\cdots,T^{(n)},\cdots)]$ such that
$$\supp(T^{(n)}(x,y))\subseteq O_{n,q_x}$$
for all $x,y\in Z_{d,n}$ with $\pi(x)\in Star(q_x)$ for all $n\in\IN$ for some $n\in\IN$ large enough depending on the sequence $[(T^{(0)},\cdots,T^{(n)},\cdots)]$.

Define $\Cau(\PdGA)_O$ to be closure of $\ICu[\PdGA]_O$ under the norm in $\Cau(\PdGA)$ .
\end{Def}

\begin{Def}
Let $O=(O_{n,q})_{q\in Q_n,n\in\IN}$ be a coherent system of open subsets of $\IR_+\times V_n$, $n\in\IN$. For each $d\geq0$, define $\ICuL[\PdGA]_{O}$ to be the $*$-subalgebra of $\ICuL[\PdGA]$ consisting of all functions
$$g:[0,\infty)\to\ICu[\PdGA]_O$$

Define $\CauL(\PdGA)_O$ to be closure of $\ICuL[\PdGA]_O$ under the norm in $\CauL(\PdGA)$ .
\end{Def}

Then we have an evaluation homomorphism
$$e:\CauL(\PdGA)_O\to\Cau(\PdGA)_O$$
defined by $e(g)=g(0)$.

By a construction of partition of unity as in the proof of \cite[Lemma 6.3]{Yu2000} (see also \cite[Lemma 6.12]{GLWZ2022}), we have the following lemma

\begin{Lem}\label{ideal}
For any two different coherent systems $O=(O_{n,q})_{q\in Q_n,n\in\IN}$ and $O'=(O'_{n,q})_{q\in Q_n,n\in\IN}$, we have that
$$\Cau(\PdGA)_O+\Cau(\PdGA)_{O'}=\Cau(\PdGA)_{O\cup O'};$$
$$\Cau(\PdGA)_O\cap\Cau(\PdGA)_{O'}=\Cau(\PdGA)_{O\cap O'};$$
$$\CauL(\PdGA)_O+\CauL(\PdGA)_{O'}=\CauL(\PdGA)_{O\cup O'};$$
$$\CauL(\PdGA)_O\cap\CauL(\PdGA)_{O'}=\CauL(\PdGA)_{O\cap O'}.\qed$$
\end{Lem}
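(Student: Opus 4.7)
The approach is a partition of unity argument on the family $\{\IR_+\times V_n\}_{n\in\IN}$, exploiting that $C_0(\IR_+\times V_n)$ sits in the center of $\A(V_n)$ and thus multiplies the matrix coefficients $T^{(n)}(x,y)\in\A(V_n)\wox\K$ without disturbing any of the other conditions in Definition \ref{twist Roe algebra for FCE-by-FCE}. First I would construct, for the given coherent systems $O$ and $O'$, two families of continuous functions $\{\phi_{n,q}\}$ and $\{\phi'_{n,q}\}$ on $\IR_+\times V_n$ satisfying $\phi_{n,q}+\phi'_{n,q}\equiv 1$ on $O_{n,q}\cup O'_{n,q}$, with $\supp(\phi_{n,q})\subseteq O_{n,q}$ and $\supp(\phi'_{n,q})\subseteq O'_{n,q}$, each bounded by $1$, and coherent under the isometries $t_{qq'}$ on overlaps. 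Given $T\in\ICu[\PdGA]_{O\cup O'}$, set $T_1^{(n)}(x,y)=\phi_{n,q_x}\cdot T^{(n)}(x,y)$ and $T_2^{(n)}(x,y)=\phi'_{n,q_x}\cdot T^{(n)}(x,y)$. Since $|\phi_{n,q_x}|,|\phi'_{n,q_x}|\leq 1$ act centrally on the representation $E_n$, this yields a contractive decomposition $T=T_1+T_2$ with $T_i$ lying in the respective algebraic subalgebras, which extends by density to the completions and proves the sum identity for $\Cau$.

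For the intersection identity, given $T\in\Cau(\PdGA)_O\cap\Cau(\PdGA)_{O'}$, choose approximating sequences $T_k\in\ICu[\PdGA]_O$ and $S_k\in\ICu[\PdGA]_{O'}$ with $T_k,S_k\to T$. Then $T_k-S_k\in\ICu[\PdGA]_{O\cup O'}$ tends to zero, so the contractive partition decomposition above yields $T_k-S_k=A_k+B_k$ with $A_k\in\ICu[\PdGA]_O$, $B_k\in\ICu[\PdGA]_{O'}$, and $\|A_k\|,\|B_k\|\leq \|T_k-S_k\|\to 0$. Now $T_k-A_k=S_k+B_k$ lies simultaneously in $\ICu[\PdGA]_O$ (since both $T_k$ and $A_k$ do) and in $\ICu[\PdGA]_{O'}$ (since both $S_k$ and $B_k$ do); algebraically this forces its supports into $O\cap O'$, so $T_k-A_k\in\ICu[\PdGA]_{O\cap O'}$. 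Since $T_k-A_k\to T$, we conclude $T\in\Cau(\PdGA)_{O\cap O'}$.

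The $\CauL$ identities follow by applying the same partition of unity pointwise in $t\in\IR_+$: multiplication by the $t$-independent functions $\phi_{n,q_x}$ and $\phi'_{n,q_x}$ does not increase propagation and preserves the support condition of Definition \ref{twist localization algebra for FCE-by-FCE}(2), so both arguments transcribe verbatim to the localization algebras.

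The main obstacle is the construction of the coherent partition of unity itself. The functions $\phi_{n,q}$ and $\phi'_{n,q}$ must match under $t_{qq'}$ on overlaps $B_{Q_n}(q,l_n)\cap B_{Q_n}(q',l_n)$, uniformly in $n$. This is exactly where the definition of coherent system enters: on overlaps the pieces $O_{n,q}\cap B_{\IR_+\times V_n}(t_q(s(q)),r)$ correspond to $O_{n,q'}\cap B_{\IR_+\times V_n}(t_{q'}(q)(s(q)),r)$ under $t_{qq'}$. I would therefore build the partition first on a model ball of uniform radius $r$ in $\IR_+\times V_n$ around $t_q(s(q))$ and then transport it via the trivializations, using uniform bounded geometry of $Q$ to keep the combinatorics finite and to obtain the uniform bounds needed for contractivity.
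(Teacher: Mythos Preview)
Your proposal is correct and matches the paper's approach: the paper does not give a detailed argument but simply cites the partition of unity construction from \cite[Lemma 6.3]{Yu2000} (and \cite[Lemma 6.12]{GLWZ2022}), which is precisely the mechanism you describe. Your write-up supplies the details that the paper omits---the coherent cut-off functions $\phi_{n,q}$, $\phi'_{n,q}$ acting centrally via $C_0(\IR_+\times V_n)\subset Z(\A(V_n))$, the resulting contractive splitting for the sum identity, and the approximation trick for the intersection identity---and correctly isolates the one genuine technical point, namely that the partition must be transported coherently under the $t_{qq'}$ so that entrywise multiplication really defines a bounded operator on $E_n$.
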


Fix some $0r>0$. For each $n\in\IN$, we fix a subset $\Gamma_n\subseteq Q_n$  and denote $\Gamma=(\Gamma_n)_{n\in\Gamma}$. Similar to Definition \ref{separate}, one can still define a $(\Ga,r)$-separate coherent system. We will not repeat it here.

\begin{Thm}\label{cutting and pasting}
If a coherent system $O=(O_{n,q})_{q\in Q_n,n\in\IN}$ of open subsets of $\IR_+\times V_n$, $n\in\IN$, is $(\Gamma,r)$-separate, then the evaluation homomorphism on $K$-theory
$$e_*:\lim_{d\to\infty}K_*(\CauL(\PdGA)_O)\to\lim_{d\to\infty}K_*(\Cau(\PdGA)_O)$$
is an isomorphism.
\end{Thm}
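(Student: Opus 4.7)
The plan is to exploit the $(\Gamma,r)$-separation hypothesis to decompose the twisted algebras into pieces indexed by $\gamma \in \Gamma_n$, each corresponding to a Roe-algebra-at-infinity piece for the normal subgroup $N_n$ tensored with an ideal of $\A(V_n)$ supported on a small ball around $t_q(\gamma)(s(\gamma))$. The theorem then reduces to the coarse Baum--Connes conjecture at infinity for $(N_n)_{n\in\IN}$, given by Corollary \ref{CBC at infinity for FCE} applied to $N$ (whose coarse disjoint union admits a fibred coarse embedding by hypothesis).

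First I would use the disjoint decomposition $O_{n,q} = \bigsqcup_{\gamma \in \Gamma_n \cap B(q,l_n)} O_{n,q,\gamma}$ to write every $T \in \ICu[\PdGA]_O$ as $T^{(n)} = \sum_\gamma T^{(n)}_\gamma$ with $T^{(n)}_\gamma(x,y) \in \A_n(O_{n,q_x,\gamma}) \wox \K$. By the $(\Gamma,r)$-separation combined with the coherence condition on $O$, the trivializations $t_{q_x q_z}$ respect the $\gamma$-decomposition, so products decompose as $T_\gamma S_{\gamma'} = \delta_{\gamma,\gamma'} T_\gamma S_\gamma$ and likewise for involutions. Thus $\ICu[\PdGA]_O$ splits, at the algebraic level, into a direct sum indexed by $\gamma$. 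If $T$ has propagation $R$ in $G_n$, then $T^{(n)}_\gamma$ is matricially supported on $Y_{\gamma,R'} := \pi^{-1}(B_{Q_n}(\gamma, R+r'))$ for a suitable $r'$, a set that is a union of a uniformly bounded number of $N_n$-cosets and therefore is coarsely equivalent to $N_n$. Mimicking Lemma \ref{ICu is isomorphic to IA}, the algebra $\ICu[\PdGA]_O$ should then be isomorphic to an inductive limit (over $R$) of an algebra built exactly like $\IA_\infty$ in Definition \ref{algebra A-infty}, but with base pieces $\IC[Y_{\gamma,R}]$ (the Roe algebra of an $N_n$-coset neighborhood) and twist factors $\A_n(O_{n,\gamma,\gamma}) \cong \S \wox C_0(O_{n,\gamma,\gamma}) \wox \Cl(V_n)$. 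A parallel decomposition holds for $\ICuL[\PdGA]_O$ since the $t$-parameter commutes with the $\gamma$-sum.

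Under this identification, the evaluation map $e$ becomes the evaluation map for the Roe-algebra-at-infinity of the multi-copy sparse space $(Y_{\gamma,R})_{\gamma \in \Gamma_n, n \in \IN}$, tensored with the identity on the nuclear factor $\A_n(O_{n,\gamma,\gamma})$. Since each $Y_{\gamma,R}$ is coarsely equivalent to $N_n$ (thickened by a bounded amount), the sparse space $(Y_{\gamma,R})_{\gamma,n}$ inherits a fibred coarse embedding into Hilbert space from that of $N$; Corollary \ref{CBC at infinity for FCE} thus gives that the untwisted evaluation is a $K$-theory isomorphism. Nuclearity of the twist factor, together with Bott periodicity for Euclidean space with Clifford coefficients, then upgrades this to a $K$-theory isomorphism of the tensored evaluation map after passing to the inductive limits in $d$ and $R$.

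The main obstacle will be rigorously carrying out the algebraic identification in the second paragraph and verifying its compatibility with the reduced norm defined via the ghost ideal $I_G((G_n,\A_n)_{n\in\IN})$; in particular, the Hilbert-module representation on $E_n$ from Definition \ref{Ax} must decompose compatibly with the $\gamma$-decomposition, and one must show that ghost elements on the $\ICu[\PdGA]_O$-side correspond exactly to ghost elements on the $(Y_{\gamma,R})$-side of the identification. A secondary subtlety is verifying that the sparse space $(Y_{\gamma,R})_{\gamma,n}$ does inherit a fibred coarse embedding from that of $N$, which should follow from the uniform bound on the number of cosets contained in each $Y_{\gamma,R}$ and the uniformly bounded geometry of $(Q_n)_{n\in\IN}$.
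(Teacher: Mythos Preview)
Your proposal is correct and follows essentially the same route as the paper: the $(\Gamma,r)$-separation is used to identify $\Cau(\PdGA)_O$ and $\CauL(\PdGA)_O$ with inductive limits (over a thickening parameter) of $\Ai^*$- and $\AiL^*$-type algebras built from the fibres $\pi^{-1}(\gamma)\cong N_n$ (this is the paper's Proposition~\ref{cut}), and then the evaluation isomorphism is reduced to Corollary~\ref{CBC at infinity for FCE} applied to the sequence $(N_\gamma)_{\gamma\in\Gamma}$ (the paper's Theorem~\ref{FCE CBC infty}). The only ingredient you do not mention explicitly is the interchange of the limits $\lim_{d\to\infty}$ and $\lim_{S\to\infty}$, which the paper handles via a straightforward commutative-cube argument.
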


We will prove Theorem \ref{cutting and pasting} in the next subsection. Granting Theorem \ref{cutting and pasting} for the moment, we are able to prove Theorem \ref{twisted CBC for G_n}.

\begin{proof}[Proof of Theorem \ref{twisted CBC for G_n}]
For any $r>0$, we define
$$O_{n,q}^{(r)}=\bigcup_{\gamma\in B_{Q_n}(q,l_n)}B_{\IR_+\times V_n}(t_{q}(\gamma)(s(\gamma)),r)$$
for all $q\in Q_n$, $n\in\IN$. Then
$$O^{(r)}:=\left(O^{(r)}_{n,q}\right)_{q\in Q_n,n\in\IN}$$
is a coherent system of open subsets. By the definition of the twisted algebra, we have that
$$\Cau(\PdGA)=\lim_{r\to\infty}\Cau(\PdGA)_{O^{(r)}};$$
$$\CauL(\PdGA)=\lim_{r\to\infty}\CauL(\PdGA)_{O^{(r)}};$$
Consequently, it suffices to show that, for each $r>0$, the evaluation map
$$e_*:\lim_{d\to\infty}K_*(\CauL(\PdGA)_{O^{(r)}})\to\lim_{d\to\infty}K_*(\Cau(\PdGA)_{O^{(r)}})$$
is an isomorphism. Similar to the proof of Proposition\ref{twisted K-amenable at infinity for FCE spaces}, $O^{(r)}$ can be covered by finitely many $(\Ga,r)$-separate coherent systems. By using a Mayer-Vietoris argument, this theorem follows from Theorem \ref{cutting and pasting}.
\end{proof}

Before we prove Theorem \ref{cutting and pasting}, we still need some preparation similar to Definition \ref{algebra A-infty}. Let $O$ be a $(\Ga,r)$-separate coherent system for some $\Ga$, $(Y^{(n)}_{d,\gamma})_{\gamma\in\Gamma_n}$ a sequence of subspaces of $P_d(G_n)$ for any $d\geq0$, $n\in\IN$, such that
\begin{itemize}
\item the sequence $(Y_{d,\gamma})_{\gamma\in\Gamma_n,n\in\IN}$ admit fibred coarse embeddings into Hilbert spaces with uniformly controlled functions $\rho_{\pm}$;
\item $\gamma\in \pi_n(Y_{d,\gamma})$ for all $\gamma\in\Gamma_n$, $n\in\IN$, where $\pi_n:P_d(G_n)\to P_d(Q_n)$.
\end{itemize}
\begin{Def}
For any open subset $O\subseteq\IR_+\times V_n$, we denote by $\A(O)$ the $C^*$-subalgebra of $\A(V_n)$ generated by the functions whose supports are contained in $O$. Define $\Ai[\Yg]$ to be the $*$-algebra of
all equivalent class of elements $[(T^{(0)},\cdots,T^{(n)},\cdots)]$ in
$$\prod_{n\in\IN}\left(\bigoplus_{\gamma\in\Gamma_n}\IC[Y_{d,\gamma}]\wox\A(O_{n,\gamma,\gamma})\right)=\prod_{n\in\IN,\gamma\in\Gamma_n}\IC[Y_{d,\gamma}]\wox\A(O_{n,\gamma,\gamma})$$
where
$$T^{(n)}=\bigoplus_{\gamma\in\Gamma}T_{\gamma}^{(n)}$$
with
$$T^{(n)}_{\gamma}\in \IC[Y_{d,\gamma}]\wox\A(O_{n,\gamma,\gamma})$$
and the family $(T^{(n)}_{\gamma})$ satisfy the condition in Definition \ref{twist Roe algebra for FCE-by-FCE} with uniform constants.

Define $\Ai^*(\Yg)$ to be the completion of $\Ai[\Yg]$ with the norm induced from
$$\|T\|=\inf\left\{\sup_{\gamma\in\Ga}\|T^{(n)}+S^{(n)}\|: (S^{(n)})_{n\in\IN} \text{ is a ghost element}\right\}$$
for any $T=[(T^{(0)},\cdots,T^{(n)},\cdots)]\in \Ai[\Yg]$, where a ghost element in $\prod^u_{n\in\IN}C^*(Y_{d,\gamma})\wox\A(O_{n,\gamma,\gamma})$ is defined similar as before.
\end{Def}

\begin{Def}
Define $\AiL[\Yg]$ to be the $*$-subalgebra of all bounded and uniformly norm-continuous functions
$$g:[0,\infty)\to\Ai[\Yg]$$
where $g(t)$ is of the form $g(t)=[g^{(0)}(t),\cdots,g^{(n)}(t),\cdots]$ for all $t\in[0,\infty)$ and
$$f^{n}(t)=\bigoplus_{\gamma\in\Gamma_n}f^{(n)}_{\gamma}(t)$$
such that the family of $(f^{(n)}_{\gamma}(t))_{\gamma\in\Gamma_n,n\in\IN,t\geq0}$ satisfy the condition in Definition \ref{twist localization algebra for FCE-by-FCE} with uniform constants, and there exists a bounded function $R:[0,\infty)\to[0,\infty)$ with $\lim_{t\to\infty}R(t)=0$ such that
$$\left(f^{(n)}_{\gamma}(t)\right)(x,y)=0\quad \mbox{ whenever }d(x,y)>R(t)$$
for all $x,y\in Z_d,n\cap Y_{\gamma}$, $\gamma\in\Gamma_n$, $n\in\IN$, $t\in[0,\infty)$.

Define $\AiL^*(\Yg)$ to be closure of $\AiL[\Yg]$ under the norm
$$\|g\|=\sup_{t\in[0,\infty)}\|g(t)\|.$$
\end{Def}

Denoted by
$$\mathcal N_{P_d(G_n)}(Y_{d,\gamma},S)=\{x\in P_d(G_n)\mid d_{P_d(G_n)}(x,Y_{d,\gamma})\leq S\}$$
the $S$-neighbourhood of $Y_{d,\gamma}$ in $P_d(G_n)$ for $n\in\IN$.

\begin{Pro}\label{cut}
Suppose that $O=(O_{n,q})_{q\in Q_n,n\in\IN}$ is a coherent system of open subsets of $\IR_+\times V_n$, $n\in\IN$ which is $(\Gamma,r)$-separate for some $\Gamma=\left(\Gamma_n\right)_{n\in\IN}$ and $r>0$ as above. Then\begin{itemize}
\item[(1)]$\Cau(\PdGA)_O\cong\lim\limits_{S\to\infty}\Ai^*((\mathcal N_{P_d(G_n)}(\pi^{-1}(\gamma),S):\gamma\in\Gamma_n)_{n\in\IN})$;
\item[(2)]$\CauL(\PdGA)_O\cong\lim\limits_{S\to\infty}\AiL^*((\mathcal N_{P_d(G_n)}(\pi^{-1}(\gamma),S):\gamma\in\Gamma_n)_{n\in\IN})$.
\end{itemize}\end{Pro}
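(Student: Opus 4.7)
The plan is to establish an explicit $*$-isomorphism between $\ICu[\PdGA]_O$ and the increasing union $\bigcup_{S > 0} \Ai[(\mathcal{N}_{P_d(G_n)}(\pi^{-1}(\gamma), S) : \gamma \in \Gamma_n)_{n\in\IN}]$, and then verify that the norms and ghost ideals correspond so that the identification descends to the $C^*$-completions. The localization version follows by applying the construction pointwise in $t$.

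Let $T = [(T^{(n)})] \in \ICu[\PdGA]_O$ have propagation $R$. Since $O$ is $(\Gamma, r)$-separate, each $O_{n,q}$ decomposes as the disjoint union $\bigsqcup_{\gamma \in \Gamma_n \cap B_{Q_n}(q, l_n)} O_{n,q,\gamma}$ with $O_{n,q,\gamma} \subseteq B(t_q(\gamma)(s(\gamma)), r)$. Combined with condition (5) of Definition \ref{twist Roe algebra for FCE-by-FCE}, which forces $\supp T^{(n)}(x,y) \subseteq B(t_{q_x}(s(q_x)), r)$, and the lower bound $\rho_-$ of the fibred embedding of $Q$, the splitting $T^{(n)}(x,y) = \sum_\gamma T^{(n)}_\gamma(x,y)$ into $\gamma$-pieces has only finitely many contributing $\gamma$, all with $d_{Q_n}(q_x, \gamma) \leq \rho_-^{-1}(2r)$. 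Finite propagation then implies $x, y \in \mathcal{N}_{P_d(G_n)}(\pi^{-1}(\gamma), S_0)$ for a uniform $S_0 = S_0(R, r)$. I define
$$\Phi(T)^{(n)}_\gamma(x,y) := (t_{\gamma q_x})_*\bigl(T^{(n)}_\gamma(x,y)\bigr) \in \A(V_n) \wox \K,$$
whose support lies in $O_{n, \gamma, \gamma}$ by the coherence of $O$, and thus represents an element of $\IC[\mathcal{N}_{P_d(G_n)}(\pi^{-1}(\gamma), S_0)] \wox \A(O_{n, \gamma, \gamma})$. This assignment is $*$-preserving and multiplicative: the cocycle relation $t_{\gamma q_z} = t_{\gamma q_x} \circ t_{q_x q_z}$, valid on intersecting trivializations once $n$ is large enough, reconciles the twisted product on the source with the direct-sum product on the target. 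An inverse $\Psi$ is defined by $\Psi((S^{(n)}_\gamma))^{(n)}(x,y) := \sum_\gamma (t_{q_x \gamma})_*(S^{(n)}_\gamma(x,y))$, a finite sum with pairwise disjoint supports in $V_n$.

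The crucial point is that $\Phi$ is isometric for the reduced norms. The Hilbert module $E_n$ of Definition \ref{Ax} decomposes as an orthogonal direct sum $\bigoplus_\gamma E_{n,\gamma}$ according to the $\gamma$-support pieces: the disjointness of the $O_{n,q,\gamma}$ in $V_n$, which is where $(\Gamma, r)$-separation is used essentially, yields an orthogonal splitting of the $\A(V_n)$-fibres, and the action of $T^{(n)}$ respects this decomposition, becoming, after transport by $(t_{\gamma q_x})_*$, the direct sum of the actions of the pieces $\Phi(T)^{(n)}_\gamma$. Consequently the operator norms on $E_n$ agree with the supremum of the component norms and, more importantly, $(T^{(n)})$ is a ghost in $B^*((G_n, \A_n)_{n\in\IN})$ exactly when each component $(\Phi(T)^{(n)}_\gamma)$ is a ghost in the direct-product sense used in the definition of $\Ai^*$. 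Taking completions and passing to the inductive limit as $S \to \infty$ yields (1). For (2), applying $\Phi$ pointwise in $t$ sends $\ICuL[\PdGA]_O$ into $\AiL[\cdots]$: the decomposition is linear and propagation-preserving, so uniform norm-continuity and the condition $R(t) \to 0$ pass through unchanged, and (1) applied pointwise then gives (2).

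The principal obstacle is the ghost-ideal correspondence in Step 3. A single $x \in P_d(G_n)$ can lie in several neighbourhoods $\mathcal{N}_{P_d(G_n)}(\pi^{-1}(\gamma), S)$ simultaneously, so the orthogonal decomposition required to compare norms cannot come from any spatial disjointness of the base points $x$; instead it must be extracted from the disjointness of the support pieces inside the \emph{fibre} $V_n$. Bookkeeping this carefully, and ensuring that the transported action on each $E_{n,\gamma}$ coincides with the natural action of $\Phi(T)^{(n)}_\gamma$ on the relevant $\A(O_{n,\gamma,\gamma})$-module (so that both the ordinary operator norm and the ghost condition match), is the main technical content of the proof.
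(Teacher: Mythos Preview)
Your proposal is correct and follows essentially the same route as the paper: decompose each $T^{(n)}(x,y)$ along the disjoint pieces $O_{n,q_x,\gamma}$ of the $(\Gamma,r)$-separate system, transport the $\gamma$-piece by $(t_{\gamma q_x})_*$ into $\A(O_{n,\gamma,\gamma})$, and use the support condition (5) together with the lower control function $\rho_-$ to confine $x,y$ to a uniform $S$-neighbourhood of $\pi^{-1}(\gamma)$. The paper's proof stops after constructing this correspondence and simply asserts that it extends to a $*$-isomorphism, whereas you go further and sketch the verification that the Hilbert-module decomposition $E_n = \bigoplus_\gamma E_{n,\gamma}$ makes $\Phi$ isometric and matches the ghost ideals; this extra care is appropriate and fills in what the paper leaves implicit.
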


\begin{proof}
We shall establish an isomorphism only for the first item. Take arbitrarily an element
$$T=[(T^{(0)},\cdots,T^{(n)},\cdots)]\in\ICu[\PdGA]_O$$
where $T^{(n)}:Z_{d,n}\times Z_{d,n}\to\A(V_n)\wox\K$ is a function such that there exists $S_0>0$ satisfying
$$\supp(T^{(n)}(x,y))\subseteq O_{n,x}\subseteq\bigsqcup_{\gamma\in\Gamma_n\cap B_{Q_n}(\pi(x),l_n)}O_{n,x,\gamma}$$
for all $x,y\in Z_{d,n}$, $n\in\IN$. Since the coherent system $O$ is $(\Gamma,r)$-separate. Then we have a direct sum decomposition
$$T^{n}(x,y)=\bigoplus_{\gamma\in\Gamma_n\cap B_{Q_n}(\pi(x),l_n)}T^{(n)}_{\gamma}(x,y)$$
where
$$T^{(n)}_{\gamma}(x,y)=T^{(n)}(x,y)|_{O_{n,x,\gamma}}\in\A(O_{n,x,\gamma})\wox\K.$$
On the other hand, it follows from the support condition in Definition \ref{twist Roe algebra for FCE-by-FCE}, there exists $S_0>0$ such that
$$\supp(T^{(n)}(x,y))\subseteq B_{\IR_+\times V_n}(t_{\pi(x)}(\pi(x))(s(\pi(x))),S_0)$$
Hence, $T^{(n)}(x,y)=0$ whenever
$$d(t_{\pi(x)}(\pi(x))(s(\pi(x))),t_{\pi(x)}(\gamma)(s(\gamma)))>S_0+r$$
It follows that there exists $S>0$ such that $T^{(n)}_{\gamma}(x,y)=0$ whenever $d_{P_d(Q_n)}(\pi(x),\gamma)>S$ for all $x,y\in Z_{d,n}$, $\gamma\in\Gamma_n$, $n\in\IN$. Define
$$U^{(n)}_{\gamma}(x,y)=(t_{\gamma\pi(x)})_*(T^{(n)}_{\gamma}(x,y)).$$
Since $T$ has finite propagation, $U_{\gamma}^{(n)}(x,y)$ is well-defined for all large enough $n\in\IN$, and
$$U_{\gamma}^{(n)}\in\A(O_{n,\gamma,\gamma})\wox\K$$
for all $x,y\in Z_{d,n}\cap B_{Q_n}(\gamma,S)$ with $\gamma\in\Gamma_n\cap B_{Q_n}(\pi(x),l_n)$ and $n\in\IN$. Therefore, we have
$$U_{\gamma}^{(n)}\in\ICu[\mathcal N_{P_d(G_n)}(\pi^{-1}(\gamma),S)]\wox\A(O_{n,\gamma,\gamma}).$$
Define
$$U^{(n)}=\bigoplus_{\gamma\in\Gamma_n}U^{(n)}_{\gamma}\in\bigoplus_{\gamma\in\Gamma_n}\IC[\mathcal N_{P_d(G_n)}(\pi^{-1}(\gamma),S)]\wox\A(O_{n,\gamma,\gamma})$$
for all $n\in\IN$ but finitely many $n$ and $0$ otherwise. Then the class $U=[(U^{(0)},\cdots,U^{(n)},\cdots)]$ is an element of $\Ai[(\mathcal N_{P_d(G_n)}(\pi^{-1}(\gamma),S):\gamma\in\Gamma_n)_{n\in\IN}]$. Then the correspondence $T\to U$ extends to a $*$-isomorphism.
\end{proof}

As each $N_n$ is a normal subgroup of $G_n$ for each $n\in\IN$, for a fixed $S>0$, the sequence $\mathcal N_{P_d(G_n)}(\pi^{-1}(\gamma),S)$ is uniformly coarse equivalent to the sequence $N=((N_n)_{n\in\IN})$. Thus we can conclude the following result.

\begin{Thm}\label{FCE CBC infty}
Let $S> 0$ be any positive number. Then the map
$$e_*:\lim_{d\to\infty}K_*(\AiL^*((\mathcal N_{P_d(G_n)}(\pi^{-1}(\gamma),S):\gamma\in\Gamma_n)_{n\in\IN}))\to\lim_{d\to\infty}K_*(\Ai^*((\mathcal N_{P_d(G_n)}(\pi^{-1}(\gamma),S):\gamma\in\Gamma_n)_{n\in\IN}))$$
induced by the evaluation-at-zero map on $K$-theory is an isomorphism.
\end{Thm}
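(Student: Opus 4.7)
The plan is to reduce Theorem \ref{FCE CBC infty} to Corollary \ref{CBC at infinity for FCE}, applied to the family of $S$-neighbourhoods $(\mathcal{N}_{P_d(G_n)}(\pi^{-1}(\gamma),S) : \gamma \in \Gamma_n)_{n\in\IN}$. The geometric input is the observation made just before the theorem statement: for each $\gamma \in \Gamma_n$, the preimage $\pi^{-1}(\gamma)$ is a single coset of the normal subgroup $N_n$ in $G_n$ and is therefore isometric to $N_n$ in the word metric, so its $S$-neighbourhood inside $P_d(G_n)$ is coarsely equivalent to $N_n$ with constants independent of $\gamma$ and $n$. Consequently the coarse disjoint union $\bigsqcup_{n,\gamma}\mathcal{N}_{P_d(G_n)}(\pi^{-1}(\gamma),S)$ is coarsely equivalent to $N = \bigsqcup_{n} N_n$, and thus admits a fibred coarse embedding into Hilbert space by hypothesis. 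Corollary \ref{CBC at infinity for FCE} therefore applies to this family, giving an isomorphism between the $K$-theories of the untwisted reduced localization algebra at infinity and the untwisted reduced Roe algebra at infinity.

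The second ingredient is to decouple the coefficient algebras $\A(O_{n,\gamma,\gamma})$ from the Roe-algebra factor of the $S$-neighbourhoods. Because the coherent system $O$ is $(\Gamma,r)$-separate, each $O_{n,\gamma,\gamma}$ is an open ball of radius $r$ inside $\IR_+\times V_n$ centred near $t_{\gamma}(\gamma)(s(\gamma))$, and all these balls are canonically identified via the affine-isometric trivialisations provided by the fibred coarse embedding of $Q$. Applying the Bott map $\beta$ and the Dirac map $\alpha$ from Section \ref{sec: K-amenable at infinifty} fiberwise over $\gamma$ — that is, tensoring by the identity on the untwisted Roe-at-infinity algebra of the family of $S$-neighbourhoods — produces a commutative diagram of evaluation homomorphisms linking $\AiL^*$ and $\Ai^*$ to the untwisted reduced localization and Roe algebras at infinity of the family. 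The same $K$-theoretic diagram-chase used in the proof of Theorem \ref{K-amenable at infinity for FCE spaces} shows that the vertical maps of this diagram induce isomorphisms on $K$-theory. Combining this with the isomorphism from the previous paragraph yields the desired conclusion.

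The main technical obstacle will be carrying out the Bott--Dirac decoupling uniformly in $\gamma \in \Gamma_n$ and $n \in \IN$: one must verify that the propagation-to-zero condition defining $\AiL^*$ is preserved, and that the asymptotic-morphism estimates proved in Lemmas \ref{Bott for FCE} and \ref{Dirac for FCE} extend with the additional parameter $\gamma$ tracked uniformly. This should follow from the uniform finite radius $r$ of the balls $O_{n,\gamma,\gamma}$ together with the coherence conditions built into the $(\Gamma,r)$-separate hypothesis, which ensure that the local-trivialisation data are compatible across different values of $\gamma$ inside the same $P_d(Q_n)$. Once the uniform estimates are in place, the argument becomes a routine diagram-chase combining the two $K$-theoretic equivalences described above.
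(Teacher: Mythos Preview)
Your first paragraph is correct and matches the paper's argument exactly: the $S$-neighbourhoods are uniformly coarsely equivalent to copies of $N_n$, the re-indexed family $(N_\gamma)_{\gamma\in\Gamma}$ inherits a fibred coarse embedding into Hilbert space, and one then invokes Corollary~\ref{CBC at infinity for FCE} together with coarse invariance of the evaluation map. The paper's proof stops essentially there.

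Your second paragraph, however, contains a genuine confusion. The asymptotic morphisms $\beta$ and $\alpha$ of Section~\ref{sec: K-amenable at infinifty} are built from a fibred coarse embedding of the \emph{underlying space}: $\beta$ maps the untwisted Roe algebra at infinity of $(X_n)$ into the algebra twisted by the $\A(V_n)$ arising from \emph{that} FCE. If you apply them with $(X_n)$ taken to be the $S$-neighbourhoods, the twist you land in is governed by the affine spans of the FCE of $N$, which is entirely unrelated to the coefficients $\A(O_{n,\gamma,\gamma})$ already sitting inside $\Ai^*$---those come from the FCE of $Q$. So ``tensoring $\beta$ and $\alpha$ by the identity on the untwisted Roe-at-infinity algebra'' does not produce maps into or out of $\Ai^*$, and the commutative square you describe does not assemble.

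If you want to strip the coefficients, what you actually need is the \emph{pointwise}, purely finite-dimensional Bott map $\beta(\gamma):\S\to\A(V_n)$ centred at $t_\gamma(\gamma)(s(\gamma))$ (the building block $\beta(x)$ of Section~\ref{sec: K-amenable at infinifty}, not the asymptotic morphism $\beta_t$), applied as a coefficient operation uniformly in $\gamma$ and $n$; this uses no FCE at all. Alternatively---and this is what the paper tacitly does---one simply observes that the argument leading to Corollary~\ref{CBC at infinity for FCE} goes through verbatim when the Roe-at-infinity algebras carry an auxiliary field of nuclear coefficient algebras, so that Corollary~\ref{CBC at infinity for FCE} ``with coefficients in $\A(O_{n,\gamma,\gamma})$'' yields the statement directly. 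The paper's proof invokes coarse invariance and Corollary~\ref{CBC at infinity for FCE} without ever mentioning the coefficients, so your instinct that something is being swept under the rug is correct; but the remedy is not the $\beta,\alpha$ of Section~\ref{sec: K-amenable at infinifty}.
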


\begin{proof}
From the definition, one has that the coarse Baum-Connes conjecture at infinity for the coarse disjoint union of a sequence of finite metric spaces is equivalent to the coarse Baum-Connes conjecture at infinity for the separate disjoint union of a sequence of finite metric spaces. Recall that a metric space $X=\bigsqcup_{n\in\IN}X_n$ is called a separate disjoint union if $d$ is equal to $d_n$ when restricting on $X_n$, and $d(X_n,X_m)=\infty$ if $n\ne m$. Notice that the \emph{algebraic Roe algebra at infinity} for separate disjoint union $X=\bigsqcup_{n\in\IN}X_n$ is defined as a quotient algebra of $\prod^u_{n\in\IN}C^*(X_n)$ associated with the ghost ideal.

By assumption, $(N_n)_{n\in\IN}$ admits a fibred coarse embedding into a Hilbert space. For each $\gamma\in\Gamma_n$, $n\in\IN$, set $N_{\gamma}=N_n$ and $l_{\gamma}=$. Then the sequence $(N_{\gamma})_{\gamma\in\Ga}$ also admits a fibred coarse embedding into a Hilbert space. Fixed $S>0$. The sequence of space $\{\mathcal N_{P_d(G_n)}(\pi^{-1}(\gamma),S)\}_{\gamma\in\Ga_n,n\in\IN}$ is uniformly coarsely equivalent to the sequence $\{P_d(\pi^{-1}(\gamma))\}_{\gamma\in\Ga_n,n\in\IN}$. Since $N_n$ is a normal subgroup, $\pi^{-1}(\gamma)$ is isometric to $N_n$ for each $\gamma\in\Ga_n\subseteq Q_n$. Since the coarse evaluation map is coarsely invariant (see \cite[Section 7.1]{HIT2020}), it suffices to show $e_*$ is an isomorphism for $(N_{\gamma})_{\gamma\in\Ga}$, which follows directly from Corollary \ref{CBC at infinity for FCE} and the fact that $(N_{\gamma})_{\gamma\in\Ga}$ admits a fibred coarse embedding into a Hilbert space. This finishes the proof.
\end{proof}

Now, we are ready to prove Theorem \ref{cutting and pasting}.

\begin{proof}[Proof of Theorem \ref{cutting and pasting}]
For brevity, we set up
\begin{equation*}\begin{split}
AL(d,S)&=\AiL^*((\mathcal N_{P_d(G_n)}(\pi^{-1}(\gamma),S):\gamma\in\Gamma_n)_{n\in\IN})),\\
A(d,S)&=\Ai^*((\mathcal N_{P_d(G_n)}(\pi^{-1}(\gamma),S):\gamma\in\Gamma_n)_{n\in\IN})),\\
CL_d&=\CauL(\PdGA)_O,\\
C_d&=\Cau(\PdGA)_O.
\end{split}\end{equation*}
Then by Proposition \ref{cut}, we have the following commutative diagram
$$\xymatrix{
\lim\limits_{d\to\infty}K_*(CL_d)\ar[r]^{e_*}\ar[d]_{\cong}&\lim\limits_{d\to\infty}K_*(C_d)\ar[d]^{\cong}\\
\lim\limits_{d\to\infty}\lim\limits_{S\to\infty}K_*(AL(d,S))\ar[r]^{e_*}\ar[d]_{\cong}&\lim\limits_{d\to\infty}\lim\limits_{S\to\infty}K_*(A(d,S))\ar[d]^{\cong}\\
\lim\limits_{S\to\infty}\lim\limits_{d\to\infty}K_*(AL(d,S))\ar[r]^{\cong}&\lim\limits_{S\to\infty}\lim\limits_{d\to\infty}K_*(A(d,S)).}$$
The second line and the third line commute for the following reasons. Notice that we have the following commutative diagram
$$\xymatrix{
K_*(AL(d,S))\ar[rd]\ar[rrr]^{e_*}\ar[dd]&&&K_*(A(d,S))\ar[rd]\ar[dd]&\\
&K_*(AL(d,S'))\ar[rrr]^{e_*}\ar[dd]&&&K_*(A(d,S'))\ar[dd]\\
K_*(AL(d',S))\ar[rd]\ar[rrr]^{e_*}&&&K_*(A(d',S))\ar[rd]&\\
&K_*(AL(d',S'))\ar[rrr]^{e_*}&&&K_*(A(d',S'))\\}$$
for all $d'>d$, $S'>S$, where $AL(d,S)\to AL(d',S)$, $AL(d,S)\to AL(d,S')$ are given by the inclusions. It allows us to change the order of limits from $\lim_{d\to\infty}\lim_{S\to\infty}$ to $\lim_{S\to\infty}\lim_{d\to\infty}$. So by Theorem \ref{FCE CBC infty}, the third horizontal map is an isomorphism. Then it follows that
$$e_*:\lim_{d\to\infty}K_*(\CauL(\PdGA)_O)\to\lim_{d\to\infty}K_*(\Cau(\PdGA)_O)$$
is an isomorphism, since all vertical maps are isomorphisms.
\end{proof}

\section{The Bott maps and proof of the main result}\label{sec: Proof of main result}

In this section, we shall define the Bott maps $\beta$, $\beta_L$ to construct the following commutative diagram
$$\xymatrix{
K_*(\CauL(\PdG))\ar[d]_{(\beta_L)_*}\ar[r]^{e_*}&K_*(\Cau(\PdG))\ar[d]^{(\beta)_*}\\
K_*(\CauL(\PdGA))\ar[r]^{e_*}&K_*(\Cau(\PdGA)).}$$
We will show that the Bott map $(\beta_L)_*$ is an isomorphism on $K$-theory. The maps $\beta$ and $\beta_L$ are constructed by asymptotic morphisms which play the roles of the geometric analoglue of Bott periodicity theorem proved by Yu in \cite{Yu2000}.

Now, we define the Bott maps $\beta$ and $\beta_L$, and they are defined following Definition \ref{Bott for FCE}. For each $n\in\IN$ and $x\in Z_{d,n}$, the inclusion of the $0$-dimensional affine subspace $\{t_{\pi(x)}(\pi(x))(s(\pi(x)))\}$ into $V_n$ induces a $*$-homomorphism
$$\beta(x):\S\cong\A(\{t_{\pi(x)}(\pi(x))(s(\pi(x)))\})\to\A(V_n)$$
by the formula
$$\left(\beta(x)\right)(g)=g\left(X\wox 1+1\wox C_{V_n,t_{\pi(x)}(\pi(x))(s(\pi(x)))}\right)$$
where
$$C_{V_n,t_{\pi(x)}(\pi(x))(s(\pi(x)))}:V_n\to V_n^0\subseteq\Cl(V_n^0)$$
is the unbounded function $v\mapsto v- t_{\pi(x)}(\pi(x))(s(\pi(x)))\in V_n^0\subseteq\Cl(V_n^0)$ for all $v\in V_n$.

\begin{Def}
For each $d\geq0$ and $t\in [1,\infty)$, define a map
$$\beta_t:\S\wox\ICu[\PdG]\to\Cau(\PdGA)$$
by the formula
$$\beta_t(g\wox T)=[((\beta_t(g\wox T))^{(0)},\cdots,(\beta_t(g\wox T))^{(n)},\cdots)]$$
for each $g\in\S$, $T=[(T^{(0)},\cdots,T^{(n)},\cdots)]\in\ICu[\PdG]$, where
$$(\beta_t(g\wox T))^{(n)}(x,y)=\left(\beta(x)\right)(g_t)\wox T^{(n)}(x,y)$$
for $x,y\in Z_{d,n}$, $n\in\IN$, and $g_t(r)=g(\frac rt)$ for all $r\in\IR$.
\end{Def}

\begin{Def}
For each $d\geq0$ and $t\in [1,\infty)$, define a map
$$(\beta_L)_t:\S\wox\ICuL[\PdG]\to\CauL(\PdGA)$$
by the formula
$$((\beta_L)_t(f))(r)=\beta_t(f(r))$$
for $f\in\S\wox\ICuL[\PdG]$ and $r\in\IR_+$.
\end{Def}

We have the following result which is proved in a similar manner to Lemma \ref{Bott for FCE}.

\begin{Lem}\label{Bott}
For each $d\geq0$, the maps $\beta_t$ and $(\beta_L)_t$ extend respectively to asymptotic morphisms
$$\beta:\S\wox\Cau(\PdG)\leadsto\Cau(\PdGA)$$
$$\beta_L:\S\wox\CauL(\PdG)\leadsto\CauL(\PdGA).\qed$$
\end{Lem}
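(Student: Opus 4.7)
The plan is to mimic the proof of Lemma \ref{Bott for FCE}, but with the center of the Bott homomorphism at position $x\in Z_{d,n}$ now equal to $t_{\pi(x)}(\pi(x))(s(\pi(x)))$, determined by the fibred coarse embedding of the quotient space $Q$ rather than of the ambient space. First, I would check that $\beta_t$ really lands in $\ICu[\PdGA]$ (and $(\beta_L)_t$ in $\ICuL[\PdGA]$): conditions (1)--(4) of Definition \ref{twist Roe algebra for FCE-by-FCE} are inherited from the factor $T^{(n)}\in\ICu[\PdG]$, condition (5) follows since $\supp(\beta(x)(g_t))\subseteq\IR_+\times V_n$ is contained in a ball of radius $\sim t\|g\|$ around $t_{\pi(x)}(\pi(x))(s(\pi(x)))$, and the time-dependent propagation bound for $(\beta_L)_t$ is inherited from $f\in\ICuL[\PdG]$.

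Next, I would verify the asymptotic morphism axioms on a dense $*$-subalgebra. Additivity is immediate. For multiplicativity and $*$-compatibility, the key estimate is
\[
\bigl\|(\beta(x))(g_t)-(t_{\pi(x)\pi(z)})_*\bigl((\beta(z))(g_t)\bigr)\bigr\|\xrightarrow{\ t\to\infty\ }0
\]
uniformly in $x,z\in Z_{d,n}$ with $d(x,z)\leq R$, provided $n$ is large enough that $\tilde l^d_n>R$. In that regime both $\pi(x)$ and $\pi(z)$ lie in a common trivialization ball, and the identity
\[
(t_{\pi(x)\pi(z)})_*\bigl(C_{V_n,\,t_{\pi(z)}(\pi(z))(s(\pi(z)))}\bigr)=C_{V_n,\,t_{\pi(x)}(\pi(z))(s(\pi(z)))}
\]
combined with the standard resolvent trick for the generators $g(r)=(r\pm i)^{-1}$ of $\S$ gives a bound of the order $t^{-1}\rho_+(R)$. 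The cocycle relation $(t_{\pi(x)\pi(y)})_*\circ (t_{\pi(y)\pi(x)})_*=\mathrm{id}$ handles the involution exactly as in Remark \ref{reduced norm is well-defined}.

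For continuity with respect to the $C^*$-norms, I would realize $\beta_t$ on the Hilbert module $E_n$ of Definition \ref{Ax} as a multiplier: $\beta_t(g\wox T^{(n)})=N_{g_t}\cdot(1\wox T^{(n)})$, where $N_{g_t}$ is multiplication by the section $z\mapsto (\beta(z))(g_t)$ of $\A(V_n)$. Because $\|N_{g_t}\|\leq\|g\|$, this yields $\|\beta_t(g\wox T^{(n)})\|_{E_n}\leq\|g\|\cdot\|T^{(n)}\|$ for all sufficiently large $n$, and in particular sends ghosts to ghosts, so $\beta_t$ descends to the reduced completions. Nuclearity of $\S$ then promotes the map to an asymptotic morphism $\beta:\S\wox\Cau(\PdG)\leadsto\Cau(\PdGA)$. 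The localized version $\beta_L$ follows by applying $\beta_t$ pointwise in $t\in\IR_+$; one only needs to verify that the metric-space propagation of $(\beta_L)_t(f)(r)$ still tends to $0$ as $r\to\infty$, which is immediate since $\beta_t$ does not increase the $G$-side propagation.

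The main obstacle I anticipate is not any single step but the bookkeeping around ``for all sufficiently large $n$'': since the trivializations in Definition \ref{FCE} are only defined on balls of radius $l_n\to\infty$, the twisted products and the Bott formula only make sense for tails of the sequence, and one must carefully track that the relevant estimates hold uniformly once $n$ is large enough (depending on the propagation of $T$ and the parameter $R$). The module picture via $E_n$ is precisely what makes this uniformity transparent and what allows the bounded-map estimate to pass to the reduced completion in the presence of the ghost ideal.
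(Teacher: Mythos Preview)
Your proposal is correct and follows exactly the approach the paper intends: the paper gives no separate proof of this lemma, simply noting that it ``is proved in a similar manner to Lemma~\ref{Bott for FCE}'' and appending a \qed. You have faithfully carried out that adaptation, replacing the center $t_x(x)(s(x))$ by $t_{\pi(x)}(\pi(x))(s(\pi(x)))$ and reproducing the resolvent estimate, the multiplier realization $\beta_t(g\wox T^{(n)})=N_{g_t}\cdot(1\wox T^{(n)})$ on $E_n$, and the ghost-preservation/nuclearity argument from the proof of Lemma~\ref{Bott for FCE}.
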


Note that the asymptotic morphisms
$$\beta:\S\wox\Cau(\PdG)\leadsto\Cau(\PdGA)$$
$$\beta_L:\S\wox\CauL(\PdG)\leadsto\CauL(\PdGA)$$
induce homomorphisms on $K$-theory
$$\beta_*:K_*(\Cau(\PdG))\to K_*(\Cau(\PdGA))$$
$$(\beta_L)_*:K_*(\CauL(\PdG))\to K_*(\CauL(\PdGA)).$$

\begin{Thm}\label{BottL}
For any $d> 0$, the Bott map
$$(\beta_L)_*:K_*(\CauL(\PdG))\to K_*(\CauL(\PdGA))$$
is an isomorphism.
\end{Thm}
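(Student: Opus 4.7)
My plan is to mirror the cutting-and-pasting strategy used in the proof of Theorem \ref{cutting and pasting}, combined with an application of classical Bott periodicity on the localized pieces. Concretely, for each coherent system $O=(O_{n,q})_{q\in Q_n,n\in\IN}$ of open subsets of $\IR_+\times V_n$, I would define a Bott-type subalgebra $\ICuL[\PdG]_{O'}$ on the source side, where $O'$ is the projection of $O$ onto $V_n$, and verify that $\beta_L$ carries it into $\CauL(\PdGA)_O$. Lemma \ref{ideal} then gives compatible Mayer--Vietoris decompositions of $\CauL(\PdG)$ and $\CauL(\PdGA)$ indexed by finite unions and intersections of coherent systems $O^{(r)}=(\bigcup_{\gamma\in B(q,l_n)}B(t_q(\gamma)(s(\gamma)),r))_{q\in Q_n,n\in\IN}$.

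As in the proof of Theorem \ref{twisted CBC for G_n}, I would cover $O^{(r)}$ by finitely many $(\Gamma,r)$-separate coherent systems $O^{(j)}(r)$, $j=1,\dots,J_r$. By a Mayer--Vietoris argument together with the five lemma, it then suffices to prove the $K$-theory isomorphism for the $(\Gamma,r)$-separate case. In that case, Proposition \ref{cut} (applied both to the Bott-source and Bott-target sides, after formulating the analogous cut decomposition for $\CauL(\PdG)_{O'}$) reduces the statement to showing that, for each fixed $S>0$, the Bott map on the localization algebra
\[\AiL^*\bigl((\mathcal N_{P_d(G_n)}(\pi^{-1}(\gamma),S):\gamma\in\Gamma_n)_{n\in\IN}\bigr)\]
with coefficients shifted by $\S$ and $\A(O_{n,\gamma,\gamma})$ induces an isomorphism on $K$-theory.

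On each individual piece indexed by $\gamma\in\Gamma_n$, the Bott map is the standard asymptotic morphism $\S\wox\IC[\mathcal N_{P_d(G_n)}(\pi^{-1}(\gamma),S)]_L\to\IC[\mathcal N_{P_d(G_n)}(\pi^{-1}(\gamma),S)]_L\wox\A(O_{n,\gamma,\gamma})$, because after the trivialization $t_\gamma$ and restriction to the bounded open set $O_{n,\gamma,\gamma}$, the section $s$ is essentially constant and the Bott homomorphism collapses to the finite-dimensional Bott map into $\A(V_n)$. Since $O_{n,\gamma,\gamma}$ is contained in a ball of radius $r$ of a point in the finite-dimensional affine space $V_n$, the classical Bott periodicity theorem of Higson--Kasparov--Trout \cite{HKT1998} identifies this with the Bott periodicity isomorphism on $K$-theory. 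Localization-algebra $K$-theory is preserved under this uniformly controlled assignment, using the well-known fact that the Bott map for $\S$ into a proper algebra is a $KK$-equivalence; a routine uniformity check (constants independent of $\gamma,n$) shows that the resulting isomorphism passes to the quotient $\prod/\bigoplus$ defining the algebras at infinity.

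The main obstacle, as I anticipate, will be the uniform control of the Bott periodicity isomorphism across the family $\gamma\in\Gamma_n$, $n\in\IN$: one needs that the homotopies implementing classical Bott periodicity can be chosen with parameters (propagation, Clifford degree, support radius) uniformly bounded in $\gamma$ and $n$, so as to descend to the quotient by ghost elements in the definition of the algebras at infinity. This is the standard place where the finite-dimensional Bott periodicity is upgraded to a geometric statement, and it is handled by an argument analogous to \cite[Lemma 7.2]{Yu2000} adapted to the localization-algebra-at-infinity setting; the compatibility with the trivializations $t_{xy}$ from Definition \ref{FCE} is automatic thanks to the cocycle condition built into the coherent system.
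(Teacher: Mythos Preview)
Your approach differs from the paper's and contains a genuine gap. The paper does not use the coherent-system decomposition at all for this theorem; instead it runs a standard Mayer--Vietoris argument together with induction on the dimension of the simplicial complex $P_d(G)$ (as in \cite{Yu1997,SW2007,CWY2013}), using strong Lipschitz homotopy invariance of localization-algebra $K$-theory, to reduce to the case of a $\delta$-separated (hence zero-dimensional) subset $D\subset P_d(G)$. On such a $D$ both sides decompose as quotients $\prod/\bigoplus$ over the points of $D$, and the statement follows from classical Bott periodicity applied pointwise.

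The gap in your proposal is at the very first step: there is no natural way to define the subalgebra ``$\ICuL[\PdG]_{O'}$'' on the untwisted source. The coherent systems $O=(O_{n,q})$ live in $\IR_+\times V_n$, and the ideals $\CauL(\PdGA)_O$ are cut out by a support condition on the $\A(V_n)$-valued matrix entries. The source $\CauL(\PdG)$ has $\K$-valued entries with no reference to $V_n$, so ``projection of $O$ onto $V_n$'' gives you nothing to impose there. Hence the Mayer--Vietoris decomposition exists only on the target, and you cannot set up a commuting ladder to which the five lemma applies. The reason the analogous argument works for the evaluation map in Theorem~\ref{cutting and pasting} is precisely that both its source and target carry the same $\A(V_n)$-coefficients, so the same coherent system cuts both simultaneously. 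A related symptom: your proposed ``separate-case'' map
\[
\S\wox\IC[\mathcal N_{P_d(G_n)}(\pi^{-1}(\gamma),S)]_L\longrightarrow\IC[\mathcal N_{P_d(G_n)}(\pi^{-1}(\gamma),S)]_L\wox\A(O_{n,\gamma,\gamma})
\]
is not the restriction of $\beta_L$ to a summand of the source, because $\CauL(\PdG)$ does not split as a product over $\gamma\in\Gamma_n$. The decomposition you need must be in the domain space $P_d(G)$ rather than in the coefficient space $\IR_+\times V_n$; that is exactly what the paper's dimension-induction argument provides.
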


\begin{proof}
The $K$-theory of the localization algebra is invariant under the strongly Lipschtiz homotopy equivalence (see \cite{Yu1997,Yu2000}). By the Mayer-Vietoris sequence and induction on dimensions (cf. \cite{Yu1997,SW2007,CWY2013}), the general case can be reduced to the zero-dimensional case, i.e., if $D\subset P_d(G)$ is a $\delta$-separated subspace for some $\delta>0$, then
$$(\beta_L)_*:K_*(\CauL((D_n)_{n\in\IN}))\to K_*(\CauL((D_n,\A(V_n))_{n\in\IN}))$$
is an isomorphism. Let $D_n=D\cap P_d(G_n)$.

Notice that
$$K_*(\CauL((D_n)_{n\in\IN}))\cong\frac{\prod_{n=0}^{\infty}\prod_{x\in D_n}K_*(C^*_L(\{x\}))}{\bigoplus_{n=0}^{\infty}\prod_{x\in D_n}K_*(C^*_L(\{x\}))},$$
$$K_*(\CauL((D_n,\A(V_n))_{n\in\IN}))\cong\frac{\prod_{n=0}^{\infty}\prod_{x\in D_n}K_*(C^*_L(\{x\},\A(V_n)))}{\bigoplus_{n=0}^{\infty}\prod_{x\in D_n}K_*(C^*_L(\{x\},\A(V_n)))}.$$
Then the theorem for zero-dimensional case follows from that $(\beta_L)_*$ restricts to an isomorphism from $K_*(C^*(\{x\}))\cong K_*(\K)$ to $\K_*(C^*_L(\{x\},\A(V_n)))\cong K_*(\K\otimes\A(V_n))$
at each $x\in D$ by the classical Bott periodicity.
\end{proof}

\begin{proof}[Proof of Theorem \ref{CNC at infty}]
It follows from Lemma \ref{Bott} the following diagram commutes:
$$\xymatrix{
K_*(\CauL(\PdG))\ar[d]_{(\beta_L)_*}^{\cong}\ar[r]^{e_*}&K_*(\Cau(\PdG))\ar[d]^{(\beta)_*}\\
K_*(\CauL(\PdGA))\ar[r]^{e_*}_{\cong}&K_*(\Cau(\PdGA)).}$$
By Theorem \ref{twisted CBC for G_n} and Theorem \ref{BottL}, we have that $(\beta_L)_*$ and the second horizontal map are both isomorphisms. The proof of theorem is completed through diagram chasing.
\end{proof}

\bibliographystyle{alpha}
\bibliography{ref}

\end{document}